\newcommand{\be}{\begin{equation}}
\newcommand{\ee}{\end{equation}}
\newcommand{\ba}{\begin{array}}
\newcommand{\ea}{\end{array}}
\newcommand{\bea}{\begin{eqnarray}}
\newcommand{\eea}{\end{eqnarray}}
\newcommand{\bee}{\begin{eqnarray*}}
\newcommand{\eee}{\end{eqnarray*}}
\newtheorem{Thm}{Theorem}[section]
\newtheorem{Lemma}[Thm]{Lemma}
\newtheorem{Prop}[Thm]{Proposition}
\newtheorem{Cor}[Thm]{Corollary}
\newtheorem{remark}[Thm]{Remark}
\numberwithin{equation}{section}
\def\section{\@startsection{section}{1}%
  \z@{1.5\linespacing\@plus\linespacing}{.5\linespacing}%
  {\normalfont\bfseries\large\centering}}
\def\RR{\mathbb{R}}
\def\R{{\mathbb R}}
\def\lim{\mathop{\rm lim}}
\def\goto{\rightarrow}
\def\sup{\mathop{\rm sup}}
\def\nn{\nonumber}
\def\e{\varepsilon}
\def\l{\lambda}
\def\log{{\rm log}}
\def\tu{\tilde{u}}
\def\te{\tilde{\e}}
\def\lsl{\frac{\lambda_s}{\lambda}}
\def\tgamma{{\tilde{\gamma}}}
\def\p{\partial}
\def\n{\nabla}
\def\l{\lambda}
\def\qbb{\overline{Q}_b}
\def\pbb{\overline{P}_b}
\def\tb{\tilde{b}}
\def\ut{\tilde{u}}
\def\fref#1{{\rm (\ref{#1})}}
\def\ds{\displaystyle}
\def\et{\tilde{\e}}
\def\pa{\partial}
\def\b{\beta}
\def\tq{\tilde{Q}}
\def\a{\alpha}
\def\abb{\frac{\a b}{2\b}}
\def\rsl{\frac{r_s}{\l}}
\def\matchal{\mathcal}
\def\tb{\widetilde{\beta}}
\def\P{\mathcal{P}}
\def\qbb{Q_{b,\tb}}
\def\pbb{P_{b,\tb}}
\title[]{On collapsing ring blow up solutions to the mass supercritical NLS}
\author[F. Merle]{Frank Merle}
\address{Universit\'e de Cergy Pontoise \& IHES, France}
\email{frank.merle@math.u-cergy.fr}
\author[P. Rapha\"el]{Pierre Rapha\"el}
\address{Institut de Math\'ematiques de Toulouse \& Institut Universitaire de France, Universit\'e Paul Sabatier, Toulouse, France}
\email{pierre.raphael@math.univ-toulouse.fr}
\author[J. Szeftel]{Jeremie Szeftel}
\address{DMA, Ecole Normale Sup\'erieure, France}
\email{jeremie.szeftel@ens.fr}
\begin{document}

\maketitle

\begin{abstract} We consider the nonlinear Schr\"odinger equation $i\pa_tu+\Delta u+u|u|^{p-1}=0$ in dimension $N\geq 2$ and in the mass super critical and energy subcritical range $1+\frac 4N<p<\min\{\frac{N+2}{N-2},5\}.$ For initial data $u_0\in H^1$ with radial symmetry, we prove a universal upper bound on the blow up speed. We then prove that this bound is sharp and attained on a family of collapsing ring blow up solutions first formally predicted in \cite{FG}.
\end{abstract}


\section{Introduction}



\subsection{Setting of the problem}


We consider in this paper the nonlinear Schr\"odinger equation
\be\label{nls}
(NLS)\ \ \left\{\begin{array}{ll} i\partial_tu+\Delta u+|u|^{p-1}u=0,\\ u_{|t=0}=u_0,\end{array}\right. \ \ (t,x)\in \R\times \R^N
\ee
in dimension $N\geq 2$ and in the mass supercritical and energy subcritical range 
\be
\label{cneiocnoe}
1+\frac 4N<p<2^*-1, \ \ 2^*=\left\{\begin{array}{ll}+\infty\ \ \mbox{for}\ \ N=2,\\  \frac{2N}{N-2}\ \ \mbox{for}\ \ N\geq 3.\end{array}\right.
\ee
From Ginibre and Velo \cite{GV}, given $u_0\in H^1$, there exists a unique solution $u\in \mathcal C([0,T),H^1)$ to \fref{nls} and there holds the blow up alternative: $$T<+\infty\ \ \mbox{implies}\ \ \lim_{t\to T}\|u(t)\|_{H^1}=+\infty.$$ 

The $H^1$ flow admits the conservation laws: 
\bee
&&\mbox{Mass}:\ \ M(u)\int |u(t,x)|^2=M(u_0),\\
&& \mbox{Energy}: \ \ E(u)=\frac12\int|\nabla u(t,x)|^2dx-\frac{1}{p+1}\int |u(t,x)|^{p+1}dx=E(u_0)\\
&& \mbox{Momentum}: \ \ P(u)=\Im\left(\int \nabla u(t,x)\overline{u(t,x)}dx\right)=P(u_0).
\eee
A large group of symmetries also acts in the energy space $H^1$, in particular the scaling symmetry 
\be
\label{sctionscalign}
u(t,x)\mapsto \l_0^{\frac2{p-1}}u(\l_0^2t,\l_0 x), \ \ \l_0>0
\ee and the Galilean drift: $$u(t,x)\mapsto u(t,x-\beta_0 t)e^{i\frac{\beta_0}{2}\cdot(x-\frac{\beta_0}{2} t)}, \ \ \beta_0\in \R^N.$$
The scaling invariant homogeneous Sobolev space $\dot{H}^{s_c}$ attached to \fref{nls} is the one which leaves the scaling symmetry invariant, explicitly: $$s_c=\frac{N}{2}-\frac{2}{p-1}.$$ 
We say that the problem is mass subcritical if $s_c<0$, mass critical if $s_c=0$ and mass supercritical if $s_c>0$. 
From standard argument, for mass subcritical problems, the energy dominates the kinetic energy and all $H^1$ solutions are global and bounded, see \cite{Cbook}. On the other hand, for $s_c\geq 0$ and data $$u_0\in \Sigma=H^1\cap \{xu\in L^2\},$$ the celebrated virial identity 
\be
\label{viriallaw}
\frac{d^2}{dt^2}\int |x|^2|u(t,x)|^2dx=4N(p-1)E(u_0)-\frac{16s_c}{N-2s_c}\int |\nabla u|^2\leq 16E(u_0)
\ee
implies that solutions emerging from non positive energy initial data $E(u_0)<0$ cannot exist globally and hence blow up in finite time.\\
This dichotomy can also be seen on the stability of ground states periodic solutions $u(t,x)=Q(x)e^{it}$ where $Q$ is from \cite{GNN}, \cite{KW} the unique up to symmetries solution to 
\be
\label{equationfor}
\Delta Q-Q+Q^p=0, \ \ Q\in H^1, \ \ Q>0.
\ee From variational arguments \cite{CL}, these solutions are orbitally stable for $s_c<0$, and unstable by blow up and scattering for $s_c>0$, \cite{BC}, \cite{NS}.\\
Note that we may reformulate the condition \fref{cneiocnoe} as $$0<s_c<1.$$ In this setting, the Cauchy problem is also well posed in $\dot{H^s}$ for $s_c\leq s\leq1$ and from standard argument, this implies the scaling lower bound on the blow up speed for $H^1$ finite time blow up solutions:
\be
\label{selfsimialrlower}
\|\nabla u(t)\|_{L^2}\gtrsim \frac{1}{(T-t)^{\frac{1-s_c}{2}}},
\ee
see \cite{MRamer} for further details.


\subsection{Qualitative information on blow up}


There is still little understanding of the blow up scenario for general initial data. The situation is better understood in the mass critical case $s_c=0$ since the series of works \cite{P}, \cite{MR1}, \cite{MR2}, \cite{MR3}, \cite{MR4}, \cite{MR5} where a stable blow up regime of "log-log" type is exhibited in dimension $N\leq 5$ with a complete description of the associated bubble of concentration. In particular, blow up occurs at a point and the solution concentrates exactly the ground state mass 
\be
\label{ltwoconctni}
|u(t,x)|^2\rightharpoonup \|Q\|_{L^2}^2\delta_{x=x^*}+|u^*|^2 \ \ \mbox{as}\ \ t\to T
\ee for some $(x^*,u^*)\in \R^N\times L^2$. This blow up dynamic is not the only one and there exist further threshold dynamics which transition from stable blow up to stable scattering, see \cite{BW}, \cite{MRS1}. These explicit scenario correspond to an improved description of the flow near the ground state solitary wave.\\

For $s_c>0$, the situation is more poorly understood. The only general feature known on blow up is the existence of a universal upper bound on blow up rate\footnote{for data $u_0\in \Sigma=H^1\cap\{xu\in L^2\}$.} 
\be
\label{blowupvpvud}
\int_0^T(T-t)\|\nabla u(t)\|_{L^2}^2dt<+\infty
\ee
which is a direct consequence of the time integration of the virial identity \fref{viriallaw}, see \cite{Cbook}. In \cite{MRamer}, Merle and Rapha\"el consider radial data in the range $0<s_c<1$, and show that if blow up occurs, the Sobolev invariant critical norm does not concentrate as in \fref{ltwoconctni}, it actually blows up with a universal lower bound 
\be
\label{vneoveovheo}
\|u(t)\|_{\dot{H}^{s_c}}\geq |\log (T-t)|^{C(N,p)}.
\ee This relates to the regularity results for the 3D Navier Stokes \cite{ESS} and the regularity result \cite{KMsuper}, and shows a major dynamical difference between critical and super critical blow up. Then two explicit blow up scenario have been constructed so far. In \cite{MRS}, a {\it stable} self similar blow up regime $$\|\nabla u(t)\|_{L^2}\sim \frac{1}{(T-t)^{\frac{1-s_c}{2}}}$$ is exhibited in the range $0<s_c\ll1$, $N\leq 5$, which bifurcates in some sense from the log-log analysis in \cite{MR2}, \cite{MR4}. These solutions concentrate again at a point in space.\\

A completely different scenario is investigated in \cite{R2}, \cite{RScmp} for the quintic nonlinearity $p=5$ in dimensions $N\geq 2$ where "standing ring" solutions are constructed. These solutions have radial symmetry and concentrate their mass on an asymptotic fixed sphere $$u(t,r)\sim \frac{1}{\l^{\frac2{p-1}(t)}}Q\left(\frac{r-r^*}{\lambda(t)}\right), \ \ r^*>0 $$ where $Q$ is the one dimensional mass critical ground state $p=5$, and the speed of concentration is given by the log log law
$$\lambda(t)\sim \sqrt{\frac{T-t}{\log |\log (t-t)|}}.$$ Note that this includes energy critical ($N=3$) and energy super critical regimes ($N\geq 4$), and this blow up scenario is shown to be stable by smooth radially symmetric perturbation of the data. We refer to \cite{Z}, \cite{HR1}, \cite{HR2} for further extensions in cylindrical symmetry.\\

In the breakthrough paper \cite{FG}, Fibich, Gavish and Wang propose a formal generalization of the ring scenario for $1+\frac 4N<p<5$: they formally predict and numerically observe solutions with radial symmetry which concentrate on a collapsing ring $$u(t,r)\sim \frac{1}{\l^{\frac2{p-1}(t)}}(Qe^{-\beta_\infty y})\left(\frac{r-r(t)}{\lambda(t)}\right)$$ were $Q$ is the {\it mass subcritical one dimensional ground state}  solution to \fref{equationfor}, $\beta_\infty$ is a universal Galilean drift 
\be
\label{defbeta}
\beta_\infty=\sqrt{\frac{5-p}{p+3}},
\ee and concentration occurs at the speed: $$\lambda(t)\sim(T-t)^{\frac{1}{1+\alpha}}, \ \ r(t)=(T-t)^{\frac{\alpha}{1+\alpha}}$$ for some universal interpolation number 
\be
\label{defalpha}
\alpha=\frac{5-p}{(p-1)(N-1)}.
\ee 
Moreover, numerics suggest that this blow up is {\it stable} by radial perturbation of the data. This blow up corresponds to a new type of concentration, and like the standing ring solution for $p=5$, it recovers in the supercritical regime the {\it mass} concentration scenario \fref{ltwoconctni}.


\subsection{Statement of the result}


We first claim a universal space time upper bound on blow up rate for radial data in the regime $0<s_c<1$ which sharpens the rough virial bound \fref{blowupvpvud}.

\begin{Thm}[Upper bound on blow up rate for radial data]
\label{thm1}
Let $$N\geq2, \ \ 0<s_c<1, \ \ p<5.$$
Let $u_0\in H^1$ with radial symmetry and assume that the corresponding solution $u\in \matchal C([0,T),H^1)$ of \eqref{nls} blows up in finite time $t=T$. Then there holds the space time upper bound:
\be
\label{esiheoeogh}
\int_t^T(T-\tau)\|\nabla u(\tau)\|_{L^2}^2d\tau\leq C(u_0)(T-t)^{\frac{2\alpha}{1+\alpha}},
\ee
where $\a$ is given by \eqref{defalpha}.
\end{Thm}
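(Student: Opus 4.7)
The plan is to sharpen the standard virial identity \fref{viriallaw} by combining a radial localization with the Strauss pointwise decay estimate, and then optimize the localization scale $R$ against $T-t$. I introduce a smooth radial cutoff $\phi_R\in C^\infty(\R^N)$ with $\phi_R(x)=|x|^2$ on $|x|\leq R$, $\phi_R$ constant on $|x|\geq 2R$, $|\phi_R|\leq CR^2$, and $|\nabla^k\phi_R|\lesssim R^{2-k}$. Setting $F_R(t)=\int\phi_R|u|^2dx$, a direct Morawetz-type computation gives $F_R''(t)$ equal to the classical virial integrand truncated to $|x|\leq R$ plus an error $\mathcal E_R(t)$ supported in the annulus $R\leq |x|\leq 2R$. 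Using energy conservation, the main contribution reduces to the mass supercritical form $-4s_c(p-1)\|\nabla u(t)\|_{L^2}^2+4N(p-1)E(u_0)$, which is the quantity we want to extract.

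The key estimate is the control of $\mathcal E_R(t)$. On $|x|\geq R$ I use the radial $L^\infty$ decay
\[
\|u(t)\|_{L^\infty(|x|\geq R)}\leq CR^{-\frac{N-1}{2}}\|u\|_{L^2}^{1/2}\|\nabla u(t)\|_{L^2}^{1/2},
\]
which, together with mass conservation, yields
\[
|\mathcal E_R(t)|\lesssim \|\nabla u\|_{L^2(|x|\geq R)}^2+R^{-\frac{(N-1)(p-1)}{2}}\|\nabla u(t)\|_{L^2}^{(p-1)/2}.
\]
Since $p<5$ forces $\frac{p-1}{2}<2$, Young's inequality with conjugate exponents $\frac{4}{p-1}$ and $\frac{4}{5-p}$ absorbs the nonlinear error into the main kinetic term at the cost of a residue of size $R^{-2/\alpha}$; here the exponent is $\frac{2}{\alpha}=\frac{2(N-1)(p-1)}{5-p}$, in the exact form dictated by \fref{defalpha}. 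The kinetic tail $\|\nabla u\|_{L^2(|x|\geq R)}^2$ is handled by a careful choice of $\phi_R$ (or by introducing a second cutoff with a nonnegative Hessian), which I take for granted in this plan.

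Integrating the resulting bound $\|\nabla u(t)\|_{L^2}^2\lesssim F_R''(t)+|E(u_0)|+R^{-2/\alpha}$ against the weight $T-\tau$ from $t$ to $T$, and using $|F_R|\leq CR^2\|u_0\|_{L^2}^2$, $|F_R'(t)|\leq CR\|u_0\|_{L^2}\|\nabla u(t)\|_{L^2}$, produces
\[
\int_t^T(T-\tau)\|\nabla u(\tau)\|_{L^2}^2d\tau\lesssim R^2+(T-t)^2R^{-2/\alpha}+(T-t)R\|\nabla u(t)\|_{L^2}+(T-t)^2|E(u_0)|.
\]
The balance $R^2\sim (T-t)^2R^{-2/\alpha}$, i.e.\ $R=(T-t)^{\alpha/(1+\alpha)}$, yields exactly \fref{esiheoeogh}. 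It is satisfying that this optimization reproduces the formal Fibich--Gavish--Wang collapsing-ring rate: the matching between the Strauss decay exponent $\frac{N-1}{2}$ and the Young conjugate of $\frac{p-1}{2}$ is exactly what forces the FG scaling $\alpha$.

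The main obstacle is the boundary term $(T-t)R\|\nabla u(t)\|_{L^2}$ arising from integration by parts, which involves an upper bound on $\|\nabla u(t)\|_{L^2}$ that is precisely what we are trying to control. I expect this to be handled by a self-improving bootstrap: apply the inequality on a shrinking sequence of times $t_n\to T$, use the scaling lower bound \fref{selfsimialrlower} together with Cauchy--Schwarz to reabsorb this term into the time-integrated left-hand side, and iterate to the limit. The sub-quintic constraint $p<5$ is essential both to close the Young absorption and to ensure $\alpha>0$; for $p=5$ one recovers the standing-ring regime of \cite{R2, RScmp}, and for $p>5$ the optimization would fail in the direction we need.
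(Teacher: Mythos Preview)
Your overall strategy matches the paper's exactly: localized virial identity, Strauss radial decay to control the exterior nonlinear term, Young's inequality (exploiting $p<5$) to absorb the fractional kinetic power, and the optimization $R=(T-t)^{\alpha/(1+\alpha)}$. The exponent bookkeeping is correct.

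The one genuine gap is your treatment of the boundary term $(T-t)R\|\nabla u(t)\|_{L^2}$. Your proposed ``self-improving bootstrap'' using the scaling lower bound \fref{selfsimialrlower} cannot work: \fref{selfsimialrlower} is a \emph{lower} bound on $\|\nabla u(t)\|_{L^2}$ and therefore pushes this term the wrong way. There is no obvious iteration that closes here, because the quantity $(T-t)^2\|\nabla u(t)\|_{L^2}^2$ sitting on the right is a pointwise object, not dominated by the time-integrated left-hand side.

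The paper's resolution is cleaner and worth knowing. Set
\[
g(t_0)=\int_{t_0}^{T}(T-\tau)\|\nabla u(\tau)\|_{L^2}^2\,d\tau,
\]
so that $g'(t_0)=-(T-t_0)\|\nabla u(t_0)\|_{L^2}^2$. After the Young step and the optimization in $R$, the estimate reads
\[
g(t)\leq C(T-t)^{\frac{2\alpha}{1+\alpha}}+(T-t)^2\|\nabla u(t)\|_{L^2}^2=C(T-t)^{\frac{2\alpha}{1+\alpha}}-(T-t)g'(t),
\]
which is a first-order differential inequality. Rewriting it as
\[
\left(\frac{g(t)}{T-t}\right)'=\frac{(T-t)g'(t)+g(t)}{(T-t)^2}\leq \frac{C}{(T-t)^{2-\frac{2\alpha}{1+\alpha}}}
\]
and integrating from a fixed time to $t$ yields $g(t)\leq C(u_0)(T-t)^{\frac{2\alpha}{1+\alpha}}$, since $\frac{2\alpha}{1+\alpha}<1$. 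This ODE trick is the missing idea in your proposal; once you see that the troublesome boundary term is exactly $-(T-t)g'(t)$, the loop closes without any iteration.

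A smaller remark: your worry about the kinetic tail $\|\nabla u\|_{L^2(|x|\geq R)}^2$ is a non-issue in the paper's arrangement. They bound $\int\psi''(x/R)|\nabla u|^2\leq \int|\nabla u|^2$ and then use energy conservation on the \emph{full} quantity $\int|\nabla u|^2-N\bigl(\tfrac12-\tfrac{1}{p+1}\bigr)\int|u|^{p+1}$, which produces the term $-\tfrac{2s_c}{N-2s_c}\int|\nabla u|^2$ with the good sign directly; no second cutoff is needed.
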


The proof of \fref{esiheoeogh} is surprisingly simple and relies on a sharp version of the localized virial identity introduced in \cite{MRamer}. Recall that no upper 
bound on the blow up rate is known in the mass critical case $s_c=0$, and arbitrary slow type II concentration\footnote{i.e. with bounded kinetic energy $\sup_{[0,T)}\|\nabla u(t)\|_{L^2}<+\infty$.} should be expected for the energy critical problem $s_c=1$ in the continuation of \cite{KST}. Note also that the bound \fref{esiheoeogh} implies $$\liminf_{t\uparrow T} (T-t)^{\frac{1}{1+\alpha}}\|\nabla u(t)\|_{L^2}<+\infty,$$ but the derivation of a pointwise upper bound on blow up speed for all times remains open.\\

We now claim that the bound \fref{esiheoeogh} is {\it sharp in all dimensions} and attained on the collapsing ring solutions:

\begin{Thm}[Existence of collapsing ring blow up solutions]\label{thm2}
Let $$N\geq 2, \ \ 0<s_c<1, \ \ p<5$$ and $\beta_\infty>0$, $0<\a<1$ given by \fref{defbeta}, \fref{defalpha}. Let $Q$ be the one dimensional mass subcritical ground state solution to \fref{equationfor}. Then there exists a time $\underline{t}<0$ and a solution $u\in \mathcal C([\underline{t},0),H^1)$ of \fref{nls}  with radial symmetry which blows up at time $T=0$ according to the following dynamics. There exist geometrical parameters $(r(t),\l(t), \gamma(t))\in\RR^*_+\times \R^*_+\times\R$ such that:
\be
\label{decomposition}
u(t,r)-\frac{1}{\lambda^{\frac{2}{p-1}}(t)}\left[Qe^{-i\b_\infty y}\right]\left(\frac{r-r(t)}{\lambda(t)}\right)e^{i\gamma(t)}\to 0\  \ \mbox{in}\ \ L^2(\R^N).
\ee
The speed and the radius of concentration and the phase drift are given by the asymptotic laws:
\be
\label{convblowuppiint}
r(t)\sim |t|^{\frac{\a}{1+\a}}, \ \ \lambda(t)\sim |t|^{\frac{1}{1+\a}}, \ \ \gamma(t)\sim |t|^{-\frac{1-\a}{1+\a}} \ \ \mbox{as} \ \ t\uparrow 0.
\ee 
Moreover, the blow up speed admits the equivalent: 
\be
\label{equioafo}
 \|\nabla u(t)\|_{L^2}\sim \frac{1}{(T-t)^{\frac{1}{1+\alpha}}}\ \ \mbox{as} \ \ t\uparrow 0.
 \ee
\end{Thm}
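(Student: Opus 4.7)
\medskip

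\noindent\textbf{Proof plan for Theorem \ref{thm2}.} The strategy is a modulation-theoretic construction in the spirit of \cite{MR2,R2,MRS,RScmp}: build an accurate approximate solution concentrated on a shrinking ring, derive the modulation ODEs governing the geometric parameters, close an energy estimate on the remainder, and use a one-parameter topological shooting to select the unstable scaling mode. Let me spell out the four main steps.

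\smallskip

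\noindent\emph{Step 1: ansatz and approximate profile.} Following \cite{FG}, I would seek a radial solution of the form
\be\label{ansatzplan}
u(t,r)=\frac{1}{\lambda(t)^{\frac{2}{p-1}}}\bigl[Q_b(y)\,e^{-i\beta y}\bigr]\,e^{i\gamma(t)}+\tilde\e(t,r),\qquad y=\frac{r-r(t)}{\lambda(t)},
\ee
where $Q_b=Q+bP_1+b^2P_2+\dots$ is a $b$-deformation of the one-dimensional mass-subcritical ground state, the extra Galilean phase is needed because $Q$ is not a solitary wave for the $1$D equation at $p>1+4/N$. Plugging \eqref{ansatzplan} into \eqref{nls} and expanding the radial Laplacian as $\partial_{rr}+\frac{N-1}{r(t)+\lambda y}\partial_r$, one identifies at leading order a deformed profile equation
\[
Q_b''+(Q_b)^p-Q_b+2i\beta Q_b'-\beta^2 Q_b+ib(\tfrac{2}{p-1}Q_b+yQ_b')+\lambda \tfrac{N-1}{r}(Q_b'-i\beta Q_b)=0+O(b^2,\lambda^2/r^2).
\]
The terms proportional to $\lambda/r$ represent the ring curvature; the $b$-terms come from the self-similar dilation $\lambda\lambda_t=-b$. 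Balancing these forces the $1$D mass-subcritical profile $Q$ to leading order and defines $P_1,P_2$ as inversions of the linearized operator $L_+L_-$ against explicit source terms.

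\smallskip

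\noindent\emph{Step 2: modulation equations and derivation of the laws.} Impose orthogonality of the remainder $\tilde\e$ to a suitable five-dimensional set of directions generated by translation, phase, scaling, Galilean and $b$-derivatives of the approximate profile. This yields a closed system of ODEs
\[
\frac{\lambda_t}{\lambda}+b=O(\text{err}),\quad \frac{\dot r}{\lambda}+2\beta=O(\text{err}),\quad b_t+c_1 b^2+c_2\Bigl(\frac{\lambda}{r}\Bigr)^2=O(\text{err}),\quad \beta_t=O(\text{err}),\quad \gamma_t+|\beta|^2=O(\text{err}),
\]
with constants $c_1,c_2$ computable as explicit scalar products against $Q$. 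Matching the power-law ansatz $\lambda\sim|t|^{1/(1+\alpha)}$, $r\sim|t|^{\alpha/(1+\alpha)}$, $b\sim\lambda/|t|$ to this ODE system pins down both $\alpha=\frac{5-p}{(p-1)(N-1)}$ from \eqref{defalpha} and the drift $\beta_\infty=\sqrt{(5-p)/(p+3)}$ from \eqref{defbeta}. The phase law $\gamma\sim|t|^{-(1-\alpha)/(1+\alpha)}$ then follows by integrating $\gamma_t\sim \lambda^{-2}$.

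\smallskip

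\noindent\emph{Step 3: energy estimate and bootstrap.} Define a localized Lyapunov functional
\[
\mathcal F(\tilde\e)=\int|\partial_r\tilde\e|^2-\frac{1}{p+1}\bigl[\dots\bigr]+\text{(quadratic corrections adapted to $Q_b e^{-i\beta y}$)},
\]
whose coercivity in a weighted norm rests on the fact that the $1$D linearization around $Q$ is the mass-subcritical operator $L_+L_-$, which is positive modulo the explicit kernel killed by the orthogonality conditions of Step 2. Compute $\frac{d}{dt}\mathcal F$ on the flow, control the source from the approximate-profile error (which is of size $b^2\sim\lambda^2/|t|^2$), and bootstrap smallness $\|\tilde\e\|_{\mathcal H}\lesssim \lambda^{\kappa}$ for some $\kappa$ strictly larger than needed for $\tilde\e$ to be negligible compared to the main bubble in $L^2$. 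Combining with Theorem \ref{thm1} to rule out anomalous escape through the kinetic-energy upper bound closes the bootstrap.

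\smallskip

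\noindent\emph{Step 4: topological shooting.} The linearization around the ansatz has a single unstable direction (the $b$-mode), reflecting the mass-supercritical instability of $Q$. Fix $\underline{t}<0$ close to $0$, parameterize initial data by $b(\underline t)\in[b_-(\underline t),b_+(\underline t)]$ near the formal value $(1+\alpha)\lambda(\underline t)/|\underline t|$, and show that the exit map from the bootstrap regime changes index as $b(\underline t)$ traverses this interval; a Brouwer-type argument then yields a choice for which the trajectory never exits. The resulting solution stays in the regime \eqref{ansatzplan} on $[\underline t,0)$, gives \eqref{decomposition}--\eqref{convblowuppiint} from the modulation laws, and \eqref{equioafo} follows from $\|\nabla u\|_{L^2}\sim \|\nabla Q\|_{L^2}/\lambda\sim|t|^{-1/(1+\alpha)}$. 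The hardest point, in my view, is Step 1--3 combined: the ring radius $r(t)\to 0$ as $t\uparrow 0$, so the curvature term $(N-1)/r$ blows up and the ``small parameter'' $\lambda/r\sim|t|^{(1-\alpha)/(1+\alpha)}$ only decays polynomially. Controlling the associated tails of $Q_b$ at $y\to\pm\infty$, and especially preventing the shrinking ring from interacting pathologically with the coordinate singularity at $r=0$, is the essential technical burden; this must be balanced precisely by the Galilean boost $\beta_\infty$, whose value is the only one making the formal expansion integrable.
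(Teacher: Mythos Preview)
Your Steps 1--3 are broadly in the right spirit, but Step 4 rests on a misconception and would lead you astray. There is \emph{no} unstable scaling mode to shoot on: the leading-order profile is the one-dimensional \emph{mass-subcritical} ground state $Q$, for which the linearized energy $(L_+\et_1,\et_1)+(L_-\et_2,\et_2)$ is coercive modulo the kernel directions (Lemma~\ref{lemmacoerc}), and these are all absorbed by the four orthogonality conditions \eqref{ortho}. The parameter $b$ is not an independent dynamical variable but is \emph{frozen} by the algebraic relation $b=\frac{2\beta}{\alpha}\frac{\lambda}{r}$ throughout. The paper therefore uses no Brouwer argument. Instead it runs a \emph{backward-in-time compactness} scheme \`a la \cite{Merlemulti,martel,RSmin}: pick $t_n\uparrow 0$, launch the exactly well-prepared data $u_n(t_n)=\tilde Q(t_n)$ so that $\e(t_n)\equiv 0$, integrate backwards, and close the bootstrap (Proposition~\ref{prop:boot}) on a time interval $[\underline t,t_n]$ independent of $n$; then extract an $L^2$-compact limit. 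What makes this close is that the approximate solution $Q_{b,\tilde\beta}$ is built to \emph{arbitrary} order $b^k$ (Proposition~\ref{prop:approxsol}), so the source term in the monotonicity formula is $O(b^k/\lambda^4)$, and for $k$ large enough this is backwards-integrable after dividing by $\lambda^\theta$ with $\theta$ chosen to beat the coercivity constant (see \eqref{labelstar}--\eqref{hohoho2}). Your finite-order expansion $Q+bP_1+b^2P_2$ would not be accurate enough for this.

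Two further corrections to Step 3. First, the Lyapunov functional must contain a \emph{localized Morawetz} term: the paper's $\mathcal I$ in \eqref{defI} includes $\frac{\beta}{\lambda}\Im\int\phi\bigl(\frac{r}{r(t)}-1\bigr)\partial_r\tilde u\,\overline{\tilde u}$, whose purpose is to reproduce the Galilean drift $e^{-i\beta_\infty y}$ on the soliton core so that, in the variable $\et=\e e^{i\beta y}$, $\mathcal I$ becomes a small perturbation of the coercive linearized energy (Lemma~\ref{icoervie}). A pure energy functional without this virial correction will not be coercive here. Second, Theorem~\ref{thm1} plays no role in the proof of Theorem~\ref{thm2}; the bootstrap closes entirely from the mixed energy/Morawetz identity (Lemma~\ref{lemma:timederivative}) and the rough bound $|\mathcal J(\tilde u)|\lesssim \frac{b}{\lambda^4}\|\e\|_{H^1_\mu}^2$ (Lemma~\ref{lemmajut}), with no appeal to the upper bound \eqref{esiheoeogh}.
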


{\it Comments on the result.}\\

{\it 1. Sharp upper bound on the blow up speed}. From direct inspection using \fref{defalpha}, the blow up rate \fref{equioafo} of ring solutions saturates the upper bound \fref{esiheoeogh} which is therefore optimal in the radial setting. This shows that there is some sharpness in the nonlinear interpolation estimates underlying the proof of \fref{esiheoeogh} and the associated localized virial identity which were already at the heart of the sharp lower bound \fref{vneoveovheo} in \cite{MRamer}. We may also derive from the proof the behavior of the critical norm $$\|u(t)\|_{\dot{H^{s_c}}}\sim \frac{1}{\l^{s_c}(t)}\sim \frac{1}{(T-t)^{\frac{s_c}{1+\alpha}}}$$ which shows as conjectured in \cite{MRamer} that the logarithmic lower bound \fref{vneoveovheo} is not always sharp, even though it is attained for the self similar blow up solutions build in \cite{MRS}.\\

{\it 2. On the restriction $s_c<1$}. We have restricted attention in this paper to the case $s_c<1$. This assumption is used to control the plain nonlinear term and ensures through the energy subcritical Cauchy theory that controlling $H^1$ norms is enough to control the flow. We however conjecture that the sharp threshold for the existence of collapsing ring solutions is $p<5$ in {\it any} dimension $N\geq 2$. This would require exactly as in \cite{RScmp} the control of higher order Sobolev norms in the bootstrap regime corresponding to collapsing ring solutions exhibited in this paper. This is an independent problem which needs to be addressed in details.\\ 

{\it 3. Non dispersive solutions}. The construction of the ring solution relies on the strategy to build {\it minimal blow up elements} developed in \cite{RSmin}. In particular, let us stress the fact that \fref{decomposition} coupled with the laws \fref{convblowuppiint} implies that the solution is {\it nondispersive} because $$\|u_0\|_{L^2(\R^N)}=\|Q\|_{L^2(\R)}$$ and the solution concentrates all its $L^2$ mass at blow up: $$|u(t)|^2\rightharpoonup \|Q\|_{L^2(\R)}^2\delta_{x=0}\ \ \mbox{as}\ \ t\uparrow 0.$$  In fact, a three parameter family of such minimal elements -indexed on scaling and phase invariance, and an additional internal Galilean drift parameter- is constructed. This is a major difference with \cite{R2}, \cite{RScmp} where the stationary ring solutions require a non trivial dispersion, and hence the full log-log machinery developed in \cite{MR2}, \cite{MR4}. Such minimal elements can be constructed by reintegrating the flow backwards from the singularity using a mixed Energy/virial Lyapunov functional. The key is that as observed in \cite{RSmin}, only energy bounds on the associated linearized operator close to $Q$ are required to close this analysis, see also \cite{KLR2}, \cite{MMRmin} for further illustrations. We also remark that because the problem is no longer critical, we can construct an approximate solution to all orders using the slow modulated approach in \cite{MR2}, \cite{KMRhartree}, \cite{RSmin}, and therefore the construction of the minimal element requires less structure\footnote{even though a similar structure could be exhibited  which would probably be relevant for stability issues, and in particular a finite order expansion is enough to close the analysis.} than in \cite{RSmin} and the proof is particularly robust. Let us stress the fact that obtaining dispersion using dispersive bounds for the linearized operator would be particularly delicate for this problem because the leading order blow up profile is given by the {\it mass subcritical ground state} for which the linearized spectrum displays a pair of complex eigenvalues leading to oscillatory modes, see \cite{CGNT}. We mention that the existence of a minimal ring solution in the particular case $N=p=3$ has been recently announced in \cite{P2}.\\

{\it 4. Arbitrary concentration of the mass}. We may let the scaling symmetry \fref{sctionscalign} act on the solution constructed by Theorem \ref{thm2} and obtain solutions with an arbitrary small or large amount of mass: $$|u(t)|^2\rightharpoonup m\delta_{x=0}\ \ \mbox{as}\ \ t\uparrow 0, \ \ m>0.$$ This is a spectacular difference with the mass critical problem $s_c=0$ where the amount of mass focused by the nonlinearity is conjectured to be quantized, see \cite{MR5}.\\

Let us stress that Theorem \ref{thm2} gives the first explicit description of blow up dynamics for a large set of values $(N,p)$, and the robust scheme behind the proof is likely to adapt to a large class of problems. One important open problem after this work is to understand {\it stability} properties of the collapsing ring blow up solutions. The numeral experiments in \cite{FG} clearly indicate the stability of the ring mechanism by radial perturbation of the data, but the proof would involve dealing with dispersion near the subcritical ground state which is a delicate analytical problem. We moreover expect that the ring singularity scenario persists on suitably prepared finite codimensional sets of non radial initial data.\\

\noindent{\bf Acknowledgments.}  P.R and J.S are supported by the ERC/ANR program SWAP. All three authors are supported by the advanced ERC grant BLOWDISOL. P.R would like to thank the MIT Mathematics Department, Boston, which he was visiting when finishing this work. \\

\noindent{\bf Notations.} We introduce the differential operator;
$$\Lambda f=\frac{2}{p-1}f+y\cdot\nabla f.$$
  Let $L=(L_+,L_-)$ the matrix linearized operator close to the one dimensional ground state:
\be
\label{deflpluslmoins}
L_+=-\partial^2_y +1-pQ^{p-1}, \ \ L_-=-\partial^2_y+1-Q^{p-1}.
\ee
We recall that $L$ has a generalized nullspace characterized by the following algebraic identities generated by the symmetry group:
\be
\label{structurekernel}
L_-(Q)=0,\ \ L_+(\Lambda Q)=-2Q, \ \ L_+(Q')=0,\ \ L_-(yQ)=-2Q'. 
\ee
We note the one dimensional scalar product: $$(f,g)=\int f(y)g(y)dy.$$


\subsection{Strategy of the proof}


Let us give a brief insight into the proof of Theorem \ref{thm2}. The scheme follows the road map designed in \cite{RSmin}.\\

{\bf step 1}. A rough approximate solution. Let us renormalize the flow using the time dependent rescaling: $$u(t,r)=\frac{1}{\lambda(t)^{\frac{2}{p-1}}}v\left(s,\frac{r-r(t)}{\lambda(t)}\right)e^{i\gamma(t)}, \ \ \frac{ds}{dt}=\frac{1}{\lambda^2}$$ which maps the finite time blow up problem \fref{nls} onto the global in time renormalized equation\footnote{defined on $y>-\frac{2\beta}{\alpha b}$.} \fref{th1:eqrenormlaizedv}:
\bea
&&\label{th1:eqrenormlaizedvintro}i\partial_s v+v_{yy}+\frac{N-1}{1+\abb y}\abb v_y-(1+\b^2)v+ib\Lambda v+2i\b v_y+v|v|^{p-1} \\
\nonumber&=& i\left(\lsl+b\right)\Lambda v+i\left(\frac{r_s}{\l}+2\b\right)v_y+(\tgamma_s-\b^2) v
\eea
where we have defined 
$$b=\frac{2\b}{\a}\frac{\l}{r}\ \ \mbox{and}\ \ \tgamma_s=\gamma_s-1.
$$
The beautiful observation of Fibich, Gavish and Wang \cite{FG} is that an approximate solution to \fref{th1:eqrenormlaizedvintro} can be constructed of the form:
$$w(s,y)=Q(y)e^{-i\frac{b(s)y^2}{4}}e^{-i\b_\infty y}
$$
where $Q$ is the mass subcritical one dimensional ground state, and this relies on the specific algebra generated by the choice \fref{defbeta} of $\beta$ and the specific choices of modulation equations \fref{vnieveoev}. Note that this choice corresponds to the cancellation $$E(Qe^{-i\beta_\infty y})=0$$ which is indeed required for a blow up profile candidate. The reintegration of the modulation equations
\be
\label{ceibevboev}
\frac{r_s}{\l}=-2\beta, \ \ -\lsl=b=\frac{2\b}{\a}\frac{\l}{r}, \ \ \frac{ds}{dt}=\frac{1}{\l^2}
\ee leads from direct check to finite time blow up\footnote{i.e. $\lambda(t)$ touches zero in finite time.} in the regime \fref{convblowuppiint}, and in particular there holds the relation: 
\be
\label{cnekocenoeno}
b\sim \l^{1-\alpha}.
\ee

{\bf step 2}. Construction of a high order approximate solution. We now proceed to the construction of a high order approximate solution to \fref{th1:eqrenormlaizedv}. Following the slow modulated ansatz approach developed in \cite{MR2}, \cite{KMRhartree}, \cite{RSmin}, we freeze the modulation equations $$\rsl=2\beta,\ \  \tgamma_s=\beta^2, \ \ b=\frac{\alpha}{2\beta}\frac{\l}{r}$$ and look for an expansion of the form 
\be
\label{vbvnneov}
Q_{b,\tb}(y)=\left[Q+\sum_{1\leq j+l\leq k-1}b^j\tilde{\beta}^l(s)(T_{j,l}(y)+iS_{j,l}(y))\right]e^{-i\frac{b(s)y^2}{4}}e^{-i\b y}.
\ee where $$\b=\b_{\infty}+\tilde{\beta}$$ and the laws for the remaining parameters are adjusted dynamically $$\lsl+b=\mathcal P_1(b,\tilde{\beta}), \ \ \tilde{\beta}_s=\matchal P_2(b,\tilde{\beta}).$$ Expanding in powers of $b,\tb$, the construction reduces to an inductive linear system \be
\label{cibeeneo}
\left\{\begin{array}{ll} L_+T_{j,l}=F_{j,l}(T_{i,k},\dots,S_{i,k})_{1\leq i\leq j,1\leq k\leq l}\\ L_-S_{j,l}=G_{j,l}(T_{i,k},\dots,S_{i,k})_{1\leq i\leq j,1\leq k\leq l}\end{array}\right., \ \ j\geq 1,
\ee where $(L_+,L_-)$ is the matrix linearized operator \fref{deflpluslmoins} close to $Q$. The kernel of this operator is well known, \cite{W1}, and the solvability of the nonlinear system \fref{cibeeneo} in the class of Schwarz functions is subject to the orthogonality conditions 
\be
\label{vbjbvbeooe}
\left\{\begin{array}{ll}(F_{j,l}(T_{i,k},\dots,S_{i,k})_{1\leq i\leq j,1\leq k\leq l},\pa_yQ)=0, \\ (G_{j,l}(T_{i,k},\dots,S_{i,k})_{1\leq i\leq j,1\leq k\leq l},Q)=0\end{array}\right.
\ee which correspond respectively to the translation and phase orbital instabilities, and is ensured inductively through the construction of the polynomials $(\mathcal P_i(b,\tilde{\beta}))_{i=1,2}$.  The fundamental observation is that the problem near the sub critical ground state {\it is no longer degenerate} i.e. $$(\Lambda Q,Q)\neq 0,$$ and this is major difference with \cite{RSmin}, \cite{KLR2}. The outcome is the construction of an approximate solution to arbitrary high order.\\

{\bf step 3} The mixed Energy/Morawetz functional. We now aim at building an exact solution and use for this the Schauder type compactness argument designed in \cite{Merlemulti}, \cite{martel}, see also \cite{KMRhartree}, \cite{RSmin}. We let a sequence $t_n\uparrow 0$ and consider $u_n(t)$ the solution to \fref{nls} with initial data given by the well prepared bubble $$u_n(t_n,x)=\frac{1}{\lambda(t_n)^{\frac{2}{p-1}}}Q_{b(t_n), \tb(t_n)}\left(\frac{r-r(t_n)}{\lambda(t_n)}\right)e^{i\gamma(t_n)}$$ where the parameters are chosen in their asymptotic law \fref{convblowuppiint}:
$$r(t_n)\sim |t_n|^{\frac{\a}{1+\a}}, \ \ \lambda(t_n)\sim |t_n|^{\frac{1}{1+\a}}, \ \ \gamma(t_n)\sim |t_n|^{-\frac{1-\a}{1+\a}}.$$ We then proceed to a modulated decomposition of the flow $$u(t,r)=\frac{1}{\lambda(t)^{\frac{2}{p-1}}}\left(Q_{b(t), \tb(t)}+\e\right)\left(t,\frac{r-r(t)}{\lambda(t)}\right)e^{i\gamma(t)}$$ where $\e$ satisfies suitable orthogonality conditions through the modulation on\\ 
$(r(t),\gamma(t),\l(t),\tb(t))$, and the $b$ parameter is frozen: $$b(t)=\frac{2\b(t)}{\a}\frac{\l(t)}{r(t)}.$$ We claim that there exists a backward time $\underline{t}$ independent of $n$ such that 
\be
\label{vnoneno}
\forall t\in [\underline{t},t_n], \ \ \|\e(t)\|_{H^1_\mu}\lesssim \lambda^{c_k}(t)
\ee where we introduce the renormalized Sobolev norm: $$\|\e\|_{H^1_\mu}^2=\int (|\pa_y\e|^2+|\e|^2)\mu, \ \ \mu(y)=\left(1+\frac{\alpha b}{2\beta}y\right)^{N-1}{\bf 1}_{1+\frac{\alpha b}{2\beta}y>0},$$  and where $c_k\to +\infty$ as $k\to +\infty$ relates to the order of expansion of the approximate solution $Q_{b, \tb}$ to \fref{th1:eqrenormlaizedvintro}. The estimate \fref{vnoneno} easily allows to conclude the proof existence by passing to the limit $t_n\uparrow 0$, and the control of the parameters $(\l(t),r(t))$ leading concentration follows from the standard reintegration of the corresponding modulation equation.\\
Following \cite{RaphRod}, \cite{RSmin}, \cite{MRS1}, the proof of \fref{vnoneno} relies on the derivation of  a mixed Energy/Morawetz Lyapunov functional. Let the Galilean shift $$\et=\e e^{i\beta y},$$ the corresponding monotonicity formula roughly takes the form:
\be
\label{neknceonocne}
\frac{d}{dt}\mathcal I=\mathcal J+O\left(\frac{b^{k}}{\l^4}\right)
\ee
where $\matchal I,\matchal J$ are given by
\bee
\nonumber \mathcal I(\ut) & = & \frac{1}{2}\int |\n\tu|^2 +\frac{1+\b^2}{2}\int \frac{|\tu|^2}{\l^2}-\int \left[F(\tq+\tu)-F(\tq)-F'(\tq)\cdot\tilde{u}\right]\\
 & + &  \frac{\b}{\l}\Im\left(\int \phi\left(\frac{r}{r(t)}-1\right)\partial_r\tu\overline{\tu}\right),
\eee
with $F(u)=|u|^{p+1}$, $\phi$ a suitable cut off function, and 
$$\mathcal J=O\left(\frac{b\|\e\|_{H^1_\mu}^2}{\l^4}\right).$$ The power of $b$ in the right hand side of \fref{neknceonocne} is related to the error in the construction of $Q_{b, \tb}$, and the Morawetz term in $\mathcal I$ is manufactured to reproduce the non trivial Galilean drift $\beta_{\infty}$ so that $\mathcal I$ is on the soliton core a small deformation of the linearized energy. Our choice of orthogonality conditions then ensures the coercivity of $\matchal I$: 
\be
\label{noenoenevnoe}
\mathcal I\gtrsim \frac{\|\e\|_{H^1_\mu}^2}{\l^2}.
\ee Now unlike in \cite{RSmin}, we do not need to take into account further structure in the quadratic term $\mathcal J$. Indeed, for a large enough\footnote{related to the universal coercivity constant in \fref{noenoenevnoe}.}  parameter $\theta\gg 1$, we obtain from $-\l\l_t\sim b>0$: $$\frac{d}{dt}\left(\frac{\mathcal I}{\l^{\theta}}\right)\gtrsim \frac{b}{\l^{4+\theta}}\left[(\theta-C)\|\e\|_{H^1_\mu}^2\right]+O\left(\frac{b^k}{\l^{4+\theta}}\right)\gtrsim O\left(\frac{b^k}{\l^{4+\theta}}\right).$$ For $k$ large enough, the last term is integrable in time in the ring regime, and integrating the ODE backwards from blow up time where $\e(t_n)\equiv 0$ yields \fref{vnoneno}. Note that the strength of this energy method is in particular to completely avoid the use of weighted spaces to control the flow as in \cite{BW}, \cite{BCD}, and the analysis is robust enough to handle rough nonlinearities $p<2$.\\

This paper is organized as follows. In section \ref{sectionthm1}, we prove Theorem \ref{thm1}. In section \ref{sectionapp}, we construct the approximate solution $Q_{b, \tb}$ using the slowly modulated ansatz. In section \ref{sectionboot}, we set up the bootstrap argument and derive the modulation equations. In section \ref{secitonmarowetw}, we derive the mixed Energy/Morawetz monotonicity formula. In section \ref{sectionconclusion}, we close the bootstrap and conclude the proof of Theorem \ref{thm2}.


\section{Universal upper bound on the blow up rate}
\label{sectionthm1}


This section is devoted to the proof of Theorem \ref{thm1}. The proof is spectacularly simple and relies on a sharp version of the localized virial identity used in \cite{MRamer}.

\begin{proof}[Proof of Theorem \ref{thm1}]

{\bf step 1} Localized virial identity. Let $N\geq 2$, $0<s_c<1$ and $u\in \mathcal C([0,T),H^1)$ be a radially symmetric finite time blow up solution $0<T<+\infty$. Pick a time $t_0<T$ and a radius $0<R=R(t_0)\ll 1$ to be chosen. Let $\chi\in \matchal D(\R^N)$ and recall the localized virial identity\footnote{see \cite{MRamer} for further details.} for radial solutions:
\be
\label{cncoenone}
\frac{1}{2}\frac{d}{d\tau}\int \chi|u|^2=Im\left(\int\nabla \chi\cdot\nabla u\overline{u}\right),
\ee
$$
\frac{1}{2}\frac{d}{d\tau}Im\left(\int\nabla \chi\cdot\nabla u\overline{u}\right)=\int\chi''|\nabla u|^2-\frac{1}{4}\int \Delta^2\chi |u|^2-\left(\frac{1}{2}-\frac{1}{p+1}\right)\int\Delta\chi|u|^{p+1}.
$$
Applying with $\chi=\psi_R=R^2\psi(\frac{x}{R})$ where $\psi(x)=\frac{|x|^2}{2}$ for $|x|\leq 2$ and $\psi(x)=0$ for $|x|\geq 3$, we get: 
\bee
& & \frac{1}{2}\frac{d}{d\tau}Im\left(\int\nabla \psi_R\cdot\nabla u\overline{u}\right)\\
& = & \int\psi''(\frac{x}{R})|\nabla u|^2-\frac{1}{4R^2}\int \Delta^2\psi(\frac{x}{R}) |u|^2-\left(\frac{1}{2}-\frac{1}{p+1}\right)\int\Delta\psi(\frac{x}{R})|u|^{p+1}\\
& \leq & \int |\nabla u|^2-N\left(\frac{1}{2}-\frac{1}{p+1}\right)\int|u|^{p+1}+C\left[\frac{1}{R^2}\int_{2R\leq |x|\leq 3R}|v|^2+\int_{|x|\geq R}|u|^{p+1}\right].
\eee
Now from the conservation of the energy: $$\int|u|^{p+1}=\frac{p+1}{2}\int |\nabla u|^2-(p+1)E(u_0)$$ from which $$\int |\nabla u|^2-N\left(\frac{1}{2}-\frac{1}{p+1}\right)\int|u|^{p+1}=\frac{N(p-1)}{2}E(u_0)-\frac{2s_c}{N-2s_c}\int|\nabla u|^2,$$ and thus:
\bea
\label{virialcontrolocalized}
 \nonumber & &  \frac{2s_c}{N-2s_c}\int |\nabla u|^2 +\frac 12\frac{d}{d\tau}Im\left(\int\nabla \psi_R\cdot\nabla u\overline{u}\right)\\
& \lesssim & \left[|E_0|+\int_{|x|\geq R}|u|^{p+1}+\frac{1}{R^2}\int_{2R\leq |x|\leq 3R}|u|^2\right]\\
 \nonumber& \leq & C(u_0)\left[1+\frac{1}{R^2}+\int_{|x|\geq R}|u|^{p+1}\right]
\eea 
from the energy and $L^2$ norm conservations.\\

{\bf step 2} Radial Gagliardo-Nirenberg interpolation estimate. In order to control the outer nonlinear term in \fref{virialcontrolocalized}, we recall the radial interpolation bound:
$$\|u\|_{L^{\infty}(r\geq R)}\leq \frac{\|\nabla u\|^{\frac{1}{2}}_{L^2}\|u\|_{L^2}^{\frac{1}{2}}}{R^{\frac{N-1}{2}}},$$  which together with the $L^2$ conservation law ensures:
\bee
\int_{|x|\geq R}|u|^{p+1} & \leq &  \|u\|^{p-1}_{L^{\infty}(r\geq R)}\int |u|^2\leq \frac{C(u_0)}{R^{\frac{(N-1)(p-1)}{2}}}\|\nabla u\|_{L^2}^{\frac{p-1}{2}}\\
& \leq & \delta\frac{2s_c}{N-2s_c}\int|\nabla u|^2+\frac{C}{\delta R^{\frac{2(N-1)(p-1)}{(5-p)}}}\\
& = & \delta\frac{2s_c}{N-2s_c}\int|\nabla u|^2+\frac{C}{\delta R^{\frac{2}{\alpha}}}
\eee
where we used H\"older for $p<5$ and the definition of $\alpha$ (\ref{defalpha}). Injecting this into (\ref{virialcontrolocalized}) yields for $\delta>0$ small enough using $R\ll1$ and $0<\alpha<1$:
\be
\label{stepinterm}
\frac{s_c}{N-2s_c}\int |\nabla u|^2 +\frac{d}{d\tau}Im\left(\int\nabla \psi_R\cdot\nabla u\overline{u}\right)\leq \frac{C(u_0,p)}{R^{\frac{2}{\alpha}}}
\ee

{\bf step 3} Time integration. We now integrate \fref{stepinterm} twice in time on $[t_0,t_2]$ using \fref{cncoenone}. This yields up to constants using Fubini in time: 
\bee
& & \int\psi_R|u(t_2)|^2+\int_{t_0}^{t_2}(t_2-t)\|\nabla u(t)\|_{L^2}^2dt\\
&  \lesssim &   \frac{(t_2-t_0)^2}{R^{\frac{2}{\alpha}}}+(t_2-t_0)\left|Im\left(\int\nabla \psi_R\cdot\nabla u\overline{u}\right)(t_0)\right|+\int \psi_R|u(t_0)|^2\\
& \leq & C(u_0)\left[\frac{(t_2-t_0)^2}{R^{\frac{2}{\alpha}}}+R(t_2-t_0)\|\nabla u(t_0)\|_{L^2}+R^2\|u_0\|_{L^2}^2\right]
\eee
We now let $t\to T$. We conclude that the integral in the left hand side converges\footnote{this is consistent with \fref{blowupvpvud} and can be proved in $\Sigma$ without the radial assumption.} and 
\be
\label{wiuwi}
\int_{t_0}^{T}(T-t)|\nabla u(t)|_{L^2}^2dt\leq C(u_0)\left[\frac{(T-t_0)^2}{R^{\frac{2}{\alpha}}}+R(T-t_0)\|\nabla u(t_0)\|_{L^2}+R^2\right].\ee
We now optimize in $R$ by choosing: $$\frac{(T-t_0)^2}{R^{\frac{2}{\alpha}}}=R^2\ \ \mbox{ie} \ \ R(t_0)=(T-t_0)^{\frac{\alpha}{1+\alpha}}.$$ (\ref{wiuwi}) now becomes: 
\bea
\label{houioh}
\nonumber \int_{t_0}^{T}(T-t)\|\nabla u(t)\|_{L^2}^2dt & \leq &   C(u_0)\left[(T-t_0)^{\frac{2\alpha}{1+\alpha}}+(T-t_0)^{\frac{\alpha}{1+\alpha}}(T-t_0)\|\nabla u(t_0)\|_{L^2}\right]\\
& \leq &  C(u_0)(T-t_0)^{\frac{2\alpha}{1+\alpha}}+(T-t_0)^2\|\nabla u(t_0)\|_{L^2}^2.
\eea
In  order to integrate this differential inequality, let \be
\label{neionenveo}
g(t_0)=\int_{t_0}^{T}(T-t)\|\nabla u(t)\|_{L^2}^2dt,
\ee
then (\ref{houioh}) means: $$g(t)\leq C(T-t)^{\frac{2\alpha}{1+\alpha}}-(T-t)g'(t) $$ ie $$ \left(\frac{g}{T-t}\right)'=\frac{1}{(T-t)^2}((T-t)g'+g)\leq \frac{1}{(T-t)^{2-\frac{2\alpha}{1+\alpha}}}.$$
 Integrating this in time  yields$$\frac{g(t)}{T-t}\leq C(u_0)+\frac{1}{(T-t)^{1-\frac{2\alpha}{1+\alpha}}}\ \ \mbox{ie} \ \ g(t)\leq C(u_0)(T-t)^{\frac{2\alpha}{1+\alpha}}$$ for $t$ close enough to $T$, which together with \fref{neionenveo} yields \fref{esiheoeogh}.\\
This concludes the proof of Theorem  \ref{thm1}.
\end{proof}


\section{The approximate solution}
\label{sectionapp}


The rest of the paper is dedicated to the proof of Theorem \ref{thm2} on the existence of ring solutions. We start in this section with the construction of an approximate solution at any order. 

\subsection{The slow modulated ansatz}

Recall the definition of the positive numbers $\a$ and $\b_\infty$ as:
\be\label{eqbeta}
\a=\frac{5-p}{(p-1)(N-1)},\,\b_\infty=\sqrt{\frac{5-p}{p+3}}.
\ee
Recall also that the restrictions on $p$ yield:
\be\label{rangeab}
0<\a<1\textrm{ and }0<\b_\infty<1.
\ee
Finally, recall that $Q$ denotes the 1-dimensional groundstate, i.e. the only positive, nonzero solution in $H^1$ of:
\be
\label{eq:groundstate}
Q''-Q+Q^p=0, \ \mbox{explicitly} \ \ Q(x)=\left(\frac{p+1}{2\cosh^2\left(\frac{p-1}{2}x\right)}\right)^{\frac{1}{p-1}}.
\ee
Let us consider the general modulated ansatz:
$$u(t,r)=\frac{1}{\lambda(t)^{\frac{2}{p-1}}}v\left(s,\frac{r-r(t)}{\lambda(t)}\right)e^{i\gamma(t)}, \ \ \frac{ds}{dt}=\frac{1}{\lambda^2}$$ which maps the finite time blow up problem \fref{nls} onto the global in time renormalized equation\footnote{defined on $y>-\frac{2\beta}{\alpha b}$.} \fref{th1:eqrenormlaizedv}:
\bea
&&\label{th1:eqrenormlaizedv}i\partial_s v+v_{yy}+\frac{N-1}{1+\abb y}\abb v_y-(1+\b^2)v+ib\Lambda v+2i\b v_y+v|v|^{p-1} \\
\nonumber&=& i\left(\lsl+b\right)\Lambda v+i\left(\frac{r_s}{\l}+2\b\right)v_y+(\tgamma_s-\b^2) v
\eea
where we have defined 
\be
\label{vnieveoev}
b=\frac{2\b}{\a}\frac{\l}{r}\ \ \mbox{and}\ \ \tgamma_s=\gamma_s-1.
\ee
We shift a Galilean phase and let $w$ be defined by:
\be\label{def:w}
w(s,y)=v(s,y)e^{i\b y}
\ee
which satisfies:
\bea\label{th1:eqrenormlaizedw}
&&i\partial_s w+w_{yy}-w+w|w|^{p-1}  +\frac{\a b}{2\b}\frac{N-1}{1+\frac{\a by}{2\b}}(w_y-i\b w)+b (i\Lambda w+\b yw)\\
\nn&=&-\tb_s yw+\left(\lsl+b\right) (i\Lambda w+\b yw)+\left(\frac{r_s}{\l}+2\b\right)(iw_y+\b w)+(\tgamma_s-\b^2) w.
\eea

\subsection{Construction of the approximate solution $\qbb$}

We now proceed to the slow modulated ansatz construction as in \cite{KMRhartree}, \cite{RSmin}. Let 
$$\b=\b_\infty+\tb.$$
We look for an approximate solution to \fref{th1:eqrenormlaizedv} of the form $$v(s,y)=Q_{b(s), \tb(s)}(y), \ \  \lsl=-b+\P_1(b,\tb), \ \ \frac{r_s}{\l}=-2\b, \ \ \tgamma_s=\b^2, \ \ \beta_s=\P_2(b,\tb),$$ 
where $\P_1$ and $\P_2$ are polynomial in $(b,\tb)$ which will be chosen later to ensure suitable solvability conditions. Note from the definition \eqref{vnieveoev} of $b$ the relation:
\be
\label{noncneo}
b_s+(1-\a)b^2-\frac{b}{\b}\P_2-b\P_1=\frac{b}{\b}(\tb_s-\P_2)+b\left(\lsl+b-\P_1\right)-\frac{\a}{2\b}b^2\left(\frac{r_s}{\l}+2\b\right).
\ee
We then define the error term:
\bea\label{defpsib}
-\Psi_{b, \tb}&=& i\left(-(1-\a)b^2+\frac{b}{\b}\P_2+b\P_1\right)\pa_b\qbb+i\P_2\p_{\tb}\qbb\\
\nonumber&&-(1+\b^2)\qbb+i(b-\P_1)\left(\frac{2}{p-1}+y\pa_y\right)\qbb\\
\nonumber && +2i\b\pa_y\qbb+\pa_y^2\qbb+\frac{\a b}{2\b}\frac{N-1}{1+\frac{\a b}{2\b}y}\pa_y\qbb+|\qbb|^{p-1}\qbb.
\eea
The algebra simplifies after a mixed Galilean/pseudo conformal drift:
\be
\label{defqb}
\qbb(y)=\pbb(y)e^{-i\b y-ib\frac{y^2}{4}}
\ee
which leads to the slowly modulated equation:
\bea\label{eqPb}
&&  i\left(-(1-\a)b^2+\frac{b}{\b}\P_2+b\P_1\right)\pa_b\pbb+i\P_2\p_{\tb}\pbb\\
\nn&-&\pbb+\pa_y^2\pbb+\frac{\a b}{2\b}\frac{N-1}{1+\frac{\a b}{2\b}y}\pa_y\pbb+|\pbb|^{p-1}\pbb\\
\nn&-& i\P_1\left(\frac{2}{p-1}+y\pa_y\right)\pbb-\P_1\left(\b y+\frac{by^2}{2}\right)\pbb+\P_2 y\pbb\\
\nn&+& \Bigg(b\b y+\left(\a b^2+\frac{b}{\b}\P_2+b\P_1\right)\frac{y^2}{4}-i\left[ \frac{N-1}{1+\frac{b\a y}{2\b}}\frac{\a b^2}{4\b}(1-\a)y\right]\Bigg)\pbb\\
\nn&=&-\Psi_{b,\tb} e^{i\b y+ib\frac{|y|^2}{4}}.
\eea
We now claim that we can construct a well localized high order approximate solution to \fref{eqPb}.

\begin{Prop}[Approximate solution]\label{prop:approxsol}
Let an integer $k\geq 5$, then there exist polynomials $\P_1$ and $\P_2$ of the form
\be\label{defP1P2}
\P_1(b, \tb)=\sum_{3\leq j+l\leq k-1}c_{1,j,l}b^j\tb^l,\,\,\,\,\,\,\, \P_2(b, \tb)=-2b\tb+\sum_{3\leq j+l\leq k-1}c_{2,j,l}b^j\tb^l,
\ee
and smooth well localized profiles $(T_{j,l}, S_{j,l})_{1\leq j+l\leq k-1}$, such that
\be\label{devPb}
\pbb=Q+\sum_{1\leq j+l\leq k-1}b^j\tb^l(T_{j,l}+iS_{j,l}),
\ee
is a solution to \eqref{eqPb} with $\Psi_{b, \tb}$ smooth and well localized in $y$ satisfying:
\be\label{error}
\Psi_{b, \tb}=O(b^k|y|^{c_k} e^{-|y|}).
\ee
Moreoever, there holds the decay estimate:
\be
\label{deacypb}
|\pbb|\lesssim (1+|y|^{2k})e^{-|y|}.
\ee
\end{Prop}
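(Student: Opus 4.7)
My plan is to plug the ansatz \eqref{devPb} into the renormalized equation \eqref{eqPb} and organize the resulting identity as a formal double power series in $(b,\tb)$, then solve the resulting linear cascade inductively on the total order $j+l$. At order $(0,0)$ every term carrying a factor of $b$, $\P_1$, $\P_2$ or $\tb$ drops out and the equation collapses to $-Q+Q''+Q^p=0$, forcing $P_{0,0}=Q$. For $j+l\geq 1$, splitting into real and imaginary parts yields a system of the form
\begin{align*}
L_+ T_{j,l} &= F_{j,l}\bigl(\{T_{j',l'},S_{j',l'}\}_{j'+l'<j+l};\,c_{1,j',l'},c_{2,j',l'}\bigr),\\
L_- S_{j,l} &= G_{j,l}\bigl(\{T_{j',l'},S_{j',l'}\}_{j'+l'<j+l};\,c_{1,j',l'},c_{2,j',l'}\bigr),
\end{align*}
where the sources come from Taylor expanding the nonlinearity $|\pbb|^{p-1}\pbb$ around $Q$, expanding the rational factor $(1+\frac{\a b}{2\b}y)^{-1}=\sum_n(-\frac{\a by}{2\b})^n$ as a formal power series in $y$, and expanding positive and negative powers of $\b=\b_\infty+\tb$.

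The heart of the construction is solvability. Since $L_+$ and $L_-$ are self-adjoint with $L_+ Q'=0$ and $L_- Q=0$ by \eqref{structurekernel}, well-posedness in a Schwartz-type class requires the orthogonality conditions $(F_{j,l},Q')=0$ and $(G_{j,l},Q)=0$. The polynomials $\P_1,\P_2$ enter the sources at order $(j,l)$ linearly through the contributions $-i\P_1\Lambda\pbb$, $-\P_1(\b y+\frac{by^2}{2})\pbb$ and $\P_2 y\pbb$ coming from \eqref{eqPb}, so the two orthogonality conditions at each new order form a $2\times 2$ linear system in the two unknown coefficients $c_{1,j,l}$ and $c_{2,j,l}$. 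A short computation using \eqref{structurekernel} and the parity of $Q$ and $Q'$ shows that the determinant of this system is a nonzero multiple of $(\Lambda Q,Q)(Q,Q)$, and the decisive non-degeneracy $(\Lambda Q,Q)=\frac{5-p}{2(p-1)}\|Q\|_{L^2}^2\neq 0$ — which is precisely the mass subcritical character of the 1D profile and the main algebraic novelty with respect to \cite{RSmin}, \cite{KLR2} — guarantees unique solvability at every step. At low orders $j+l\leq 2$ one checks by hand that the obstructions vanish automatically thanks to the specific values \eqref{eqbeta} of $\a$ and $\b_\infty$ together with the leading coefficient $-2b\tb$ of $\P_2$ coming from the choice $\tgamma_s=\b^2$ in the modulation laws; this is why $\P_1$ starts only at order $3$ in \eqref{defP1P2}.

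Once the orthogonality conditions are enforced, $T_{j,l}$ and $S_{j,l}$ are defined by inverting $L_+$ and $L_-$ on $\{Q'\}^\perp$ and $\{Q\}^\perp$ respectively. Standard ODE analysis of these 1D Schr\"odinger operators with exponentially decaying potential propagates smoothness and the pointwise bound $|T_{j,l}(y)|+|S_{j,l}(y)|\lesssim (1+|y|^{c_{j,l}})e^{-|y|}$, where $c_{j,l}$ grows with $j+l$ because each inductive step multiplies the previous profiles by powers of $y$ coming from the formal expansion of $(1+\frac{\a by}{2\b})^{-1}$. The error $\Psi_{b,\tb}$ is then defined as the residual of \eqref{eqPb} after truncating at total order $k-1$: it collects (a) the Taylor remainder of $|\pbb|^{p-1}\pbb$, (b) the geometric tail of $(1+\frac{\a by}{2\b})^{-1}$, and (c) cross-products of total degree $\geq k$ generated by multiplication of $\pbb$ with itself and with $\P_1,\P_2$. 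Each of these contributions is of the form $b^j\tb^l$ with $j+l\geq k$ times a smooth, exponentially decaying function of $y$; in the bootstrap regime $|\tb|\lesssim b$ this collapses to \eqref{error}, and \eqref{deacypb} follows by summing the exponential bounds on the profiles in \eqref{devPb}. The main technical obstacle I anticipate is purely combinatorial: the rational factor $(1+\frac{\alpha b}{2\beta}y)^{-1}$ and the negative powers of $\b_\infty+\tb$ couple $b$ and $\tb$ in a non-polynomial way, so explicit identification of the source terms $F_{j,l},G_{j,l}$ requires careful bookkeeping; once the algebra is correctly organized the inductive solvability step is routine and driven by the non-degeneracy $(\Lambda Q,Q)\neq 0$.
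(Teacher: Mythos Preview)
Your proposal is correct and follows essentially the same approach as the paper: inject the ansatz \eqref{devPb} into \eqref{eqPb}, identify terms of homogeneity $(j,l)$, solve the resulting system $L_+T_{j,l}=F_{j,l}$, $L_-S_{j,l}=G_{j,l}$ by adjusting $c_{1,j,l},c_{2,j,l}$ to enforce the solvability conditions, with the mass-subcritical non-degeneracy $(\Lambda Q,Q)\neq 0$ driving the invertibility of the $2\times 2$ system; then propagate the polynomially-weighted exponential decay through the ODE inversion and bound the Taylor remainder of the nonlinearity to get \eqref{error}. One small correction: the leading coefficient $-2b\tb$ in $\P_2$ does not come from the choice $\tgamma_s=\b^2$ but is forced by the explicit solvability computation at order $(1,1)$, where the source term $-\frac{(N-1)\a}{2\b_\infty^2}Q'+yQ$ (arising from expanding $\frac{1}{\b}=\frac{1}{\b_\infty}-\frac{\tb}{\b_\infty^2}+\cdots$ in the transport term and in $b\b y$) satisfies $(h_{1,1,1},Q')=-\|Q\|_{L^2}^2$, which via your $2\times 2$ system gives exactly $c_{2,1,1}=-2$.
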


{\bf Proof of Proposition \ref{prop:approxsol}}\\

The proof proceeds by injecting the expansion \fref{devPb} in \eqref{eqPb}, identifying the terms with the same homogeneity in $(b, \tb)$, and inverting the corresponding operator. Let us recall that if $L=(L_+,L_-)$ is the matrix linearized operator close to $Q$ given by \fref{deflpluslmoins}, then its kernel is explicit: 
\be
\label{kerl}
Ker\{L_+\}=\mbox{span}\{Q'\}, \ \ Ker\{L_-\}=\mbox{span}\{Q\},
\ee 
see \cite{W1}, \cite{CGNT}.\\

{\bf step 1} General strategy.\\

Let $j+l\geq 1$. Assume that $T_{p, q}$, $c_{1,p,q}$ and $c_{2,p,q}$ for $p+q\leq j+l-1$ have been constructed. Then, identifying the terms homogeneous of order $(j,l)$ in \eqref{eqPb} yields a linear system of the following type
\be\label{eqTjlSjl}
\left\{\begin{array}{l}
L_+(T_{j,l})=h_{1,j,l} -c_{1,j,l}\b_\infty yQ+c_{2,j,l} yQ,\\
L_-(S_{j,l})=h_{2,j,l}- c_{1,j,l}\Lambda Q,
\end{array}\right.
\ee 
where $h_{1,j,l}$ and $h_{2,j,l}$ may be computed explicitly and only depend on $T_{p, q}$, $c_{1,p,q}$ and $c_{2,p,q}$ for $p+q\leq j+l-1$. The invertibility of \fref{eqTjlSjl} requires according to \fref{kerl} to manufacture the orthogonality conditions $(h_j^{(1)}, Q')=(h_j^{(2)}, Q)=0$, see \cite{RaphRod} for related issues. We also need to track the decay in space of the associated solution in a sharp way. We claim:

\begin{Lemma}\label{lemma:generalstrat}
For all $1\leq j+l\leq k-1$, let:
\be\label{choicec1jlc2jl}
c_{1,j,l}=\frac{1}{(Q, \Lambda Q)}(h_{2,j,l}, Q)\textrm{ and }c_{2,j,l}=\frac{2}{\|Q\|^2_{L^2}}(h_{1,j,l},Q')+\frac{\b_\infty}{(Q, \Lambda Q)}(h_{2,j,l}, Q).
\ee
Then, there exist $(T_{j,l}, S_{j,l})$ solution of \eqref{eqTjlSjl} for all $1\leq j+l\leq k-1$. Furthermore, $T_{j,l}$ and $S_{j,l}$ are smooth, and decay as
\be\label{decayTjlSjl}
T_{j,l}=O(|y|^{2(j+l)}e^{-|y|})\textrm{ and }S_{j,l}=O(|y|^{2(j+l)}e^{-|y|})\textrm{ as }y\goto \pm\infty.
\ee
\end{Lemma}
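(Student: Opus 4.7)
The plan is to prove the lemma by strong induction on $N_0 = j+l$, exploiting the spectral structure \eqref{kerl} of $L_\pm$ together with the non-degeneracy $(Q, \Lambda Q) \neq 0$ flagged in the introduction as the key algebraic feature that distinguishes the subcritical ground state from the critical one.

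\textbf{Solvability conditions.} Suppose $T_{p,q}, S_{p,q}, c_{1,p,q}, c_{2,p,q}$ have been produced for all $p+q < N_0$. Since $h_{1,j,l}, h_{2,j,l}$ depend only on these lower order data, they are already known. The $L_-$ equation in \eqref{eqTjlSjl} is solvable in the class of decaying functions if and only if its right hand side is orthogonal to $\ker L_- = \mathrm{span}\{Q\}$; this fixes
$$c_{1,j,l} = \frac{(h_{2,j,l}, Q)}{(\Lambda Q, Q)},$$
where the denominator is nonzero (this is where non-degeneracy enters, and it is the only place). The $L_+$ equation is solvable iff the right hand side is orthogonal to $\ker L_+ = \mathrm{span}\{Q'\}$. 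Using the integration by parts identity $(yQ, Q') = \tfrac12 \int y (Q^2)' = -\tfrac12\|Q\|_{L^2}^2$, this orthogonality forces
$$c_{2,j,l} = \frac{2}{\|Q\|_{L^2}^2}(h_{1,j,l}, Q') + \beta_\infty c_{1,j,l},$$
which is exactly \eqref{choicec1jlc2jl}. This identifies the ansatz \eqref{defP1P2} (and also explains why $\P_1$ has no degree one or two terms, once one checks that $(h_{2,j,l},Q) = 0$ for $j+l\le 2$).

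\textbf{Inversion with decay.} Once the right hand sides, after subtracting the $c_{1,j,l}, c_{2,j,l}$ corrections, are orthogonal to the respective kernels, I invert $L_\pm$ on $\{Q'\}^\perp$ and $\{Q\}^\perp$ respectively. Each $L_\pm$ is a one-dimensional Schr\"odinger operator $-\partial_y^2 + 1 - V$ with $V = O(e^{-(p-1)|y|})$, so variation of constants against the two fundamental solutions (one bounded, one exponentially growing at $+\infty$, and symmetrically at $-\infty$) produces a unique solution exponentially decaying at both infinities; smoothness follows from elliptic regularity since the source is smooth. I then fix $T_{j,l}, S_{j,l}$ uniquely by imposing orthogonality to $Q', Q$ respectively.

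\textbf{Polynomial decay bookkeeping.} The induction hypothesis \eqref{decayTjlSjl} says $T_{p,q}, S_{p,q} = O(|y|^{2(p+q)} e^{-|y|})$. Substituting \eqref{devPb} into \eqref{eqPb} and collecting terms of order $(j,l)$ in $(b,\tilde\beta)$, one checks that each contribution to $h_{1,j,l}, h_{2,j,l}$ is a product of previously built profiles, possibly multiplied by $y$, $y^2$, or by the Taylor expansion of $(1 + \tfrac{\alpha b}{2\beta} y)^{-1}$; all of these preserve the exponential envelope and augment the polynomial prefactor in a controlled way, so that $h_{1,j,l}, h_{2,j,l} = O(|y|^{2(j+l)-2} e^{-|y|})$. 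A direct variation of constants computation then shows that inverting $L_\pm$ against such a source gains at most two polynomial powers (the loss of two powers reflects the marginal resonance between the source decay rate $e^{-|y|}$ and the rate of the bounded fundamental solution), yielding precisely $T_{j,l}, S_{j,l} = O(|y|^{2(j+l)} e^{-|y|})$, which closes the induction. The estimate \eqref{deacypb} then follows by summing \eqref{devPb} and bounding $b^j\tilde\beta^l|y|^{2(j+l)} \lesssim (1+|y|^{2k})$ for $b, \tilde\beta$ small.

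\textbf{Main obstacle.} The algebra fixing $c_{1,j,l}, c_{2,j,l}$ is transparent; the delicate part is the decay bookkeeping. One must verify that the nonlinear terms $|P_{b,\tilde\beta}|^{p-1} P_{b,\tilde\beta}$, the $(1+\tfrac{\alpha b}{2\beta}y)^{-1}$ weight, and the quadratic factors in $y$ from the pseudo-conformal conjugation \eqref{defqb} do not produce anomalous polynomial growth, and that the gain of exactly two powers of $|y|$ per inversion is sharp. Getting this bookkeeping right is what makes the final error $\Psi_{b,\tilde\beta} = O(b^k |y|^{c_k} e^{-|y|})$ with a polynomial (rather than exponential-in-$k$) prefactor, which in turn is essential for the Energy/Morawetz monotonicity argument sketched in the introduction.
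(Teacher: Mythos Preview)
Your proposal is correct and follows essentially the same approach as the paper: strong induction on $j+l$, fix the constants $c_{1,j,l},c_{2,j,l}$ via the orthogonality conditions dictated by \eqref{kerl}, then invert $L_\pm$ by variation of constants and track the polynomial prefactor. One minor bookkeeping correction: the dominant contribution to $h_{1,j,l},h_{2,j,l}$ comes from the term $\beta_\infty y\, T_{j-1,l}$ (arising from $b\beta y\,P_{b,\tilde\beta}$ in \eqref{eqPb}), which by the induction hypothesis is $O(|y|^{2(j+l)-1}e^{-|y|})$, not $O(|y|^{2(j+l)-2}e^{-|y|})$ as you wrote; correspondingly, inverting $L_\pm$ against a source $O(|y|^n e^{-|y|})$ costs only \emph{one} polynomial power (the term $Q(y)\int_1^y h\,g_2$ behaves like $e^{-|y|}\int_1^y \sigma^n\,d\sigma \sim |y|^{n+1}e^{-|y|}$), not two. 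These two off-by-one slips cancel, so your final decay estimate \eqref{decayTjlSjl} is correct and the induction closes exactly as in the paper.
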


\begin{remark}
Note that the quantity $(Q, \Lambda Q)$ appearing in \eqref{choicec1jlc2jl} is given by
$$(Q, \Lambda Q)=\frac{5-p}{2(p-1)},$$
and is well-defined and not zero since $1<p<5$. This is a major difference with respect to the analysis in \cite{RSmin}.
\end{remark}

\begin{proof}[Proof of Lemma \ref{lemma:generalstrat}]
In order to be able to solve for $(T_{j,l}, S_{j,l})$, we need, in view of  \fref{kerl} and \eqref{eqTjlSjl}
$$(h_{1,j,l} -c_{1,j,l}\b_\infty yQ+c_{2,j,l} yQ, Q')=0\textrm{ and }(h_{2,j,l}- c_{1,j,l}\Lambda Q, Q)=0$$
which is equivalent to \eqref{choicec1jlc2jl}. Thus, choosing $c_{1,j,l}$ and $c_{2,j,l}$ as in \eqref{choicec1jlc2jl}, we may solve for $(T_{j,l}, S_{j,l})$ solution of \eqref{eqTjlSjl}.

Next, we investigate the smoothness and decay properties of $(T_{j,l}, S_{j,l})$. Identifying the terms homogeneous of order $j+l$ in \eqref{eqPb}, we have for $h_{1,j,l}$ and $h_{2,j,l}$ defined in \eqref{eqTjlSjl}
\be\label{defhj}
\left\{\begin{array}{lll}
\ds h_{1,j,l}&=& \ds\sum_{p+q\leq j+l-1}\Big(a_{1,p,q}y^{j+l-p-q}T_{p,q}+a_{2,p,q}y^{j+l-p-q}S_{p,q}\\
&&\ds +a_{3,p,q}y^{j+l-p-q}T'_{p,q}\Big)+\textrm{NL}_j^{(1)},\\
\ds h_{2,j,l}&=& \ds\sum_{p+q\leq j+l-1}\Big(a_{4,p,q}y^{j+l-p-q}T_{p,q}+a_{5,p,q}y^{j+l-p-q}S_{p,q}\\
&&\ds +a_{6,p,q}y^{j+l-p-q}T'_{p,q}\Big)+\textrm{NL}_j^{(2)},\\
\end{array}\right.
\ee
where we have defined by convenience $T_{0,0}=Q$, where $a_{m,p,q}$ are real numbers which may be explicitly  computed, and where NL$_j^{(1)}$ and NL$_j^{(2)}$ are the contributions coming from the Taylor expansion of the nonlinearity near $Q$. They take the following form
\bea\label{NLj1}
\nn\textrm{NL}_j^{(1)}&=&\sum_{p\geq 0, q\geq 0}\,\,\,\,\sum_{j_m\geq 1, l_m\geq 1/ j_1+\cdots+j_q+l_1+\cdots+l_q=j+l}a^{(1)}_{j_1,\cdots, j_q, l_1, \cdots, l_q}\\
&& \times T_{j_1 ,l_1}\cdots T_{j_p, l_p}S_{j_{p+1}, l+{p+1}}\cdots S_{j_q, l_q}Q^{p-j-l},
\eea
and 
\bea\label{NLj2}
\nn\textrm{NL}_j^{(2)}&=&\sum_{p\geq 0, q\geq 0}\,\,\,\,\sum_{j_m\geq 1, l_m\geq 1/ j_1+\cdots+j_q+l_1+\cdots+l_q=j+l}a^{(2)}_{j_1,\cdots, j_q, l_1, \cdots, l_q}\\
&& \times T_{j_1 ,l_1}\cdots T_{j_p, l_p}S_{j_{p+1}, l+{p+1}}\cdots S_{j_q, l_q}Q^{p-j-l},
\eea
where the real numbers $a^{(1)}_{j_1,\cdots, j_q, l_1, \cdots, l_q}$ and $a^{(2)}_{j_1,\cdots, j_q, l_1, \cdots, l_q}$ may be computed explicitly.

We argue by induction. Assume that $T_{p, q}$, $p+q\leq j+l-1$,  satisfy the conclusions of the lemma in terms of smoothness and decay. Then, we easily check from the formulas \eqref{defhj} \eqref{NLj1} \eqref{NLj2} that $h_{1,j,l}$ and $h_{2,j,l}$ are smooth.  Then, from standard elliptic regularity, we deduce that $T_{j,l}$ and $S_{j,l}$ are smooth.

Finally, we consider the decay properties of $T_{j,l}$ and $S_{j,l}$. Since we assume by induction that $T_{p, q}$, $p+q\leq j+l-1$, satisfy the decay assumption \eqref{decayTjlSjl}, we easily obtain from \eqref{NLj1} and \eqref{NLj2}
$$\textrm{NL}_j^{(1)}=O(|y|^{2(j+l)}e^{-p|y|})\textrm{ and }\textrm{NL}_j^{(2)}=O(|y|^{2(j+l)}e^{-p|y|})\textrm{ as }y\goto \pm\infty.$$
Together with \eqref{eqTjlSjl}, the fact that $T_{p, q}$, $p+q\leq j+l-1$ satisfy the decay assumption \eqref{decayTjlSjl}, and the fact that $p>1$, we deduce
\be\label{decayeqTjSj}
h_{1,j,l}=O(|y|^{2j-1}e^{-|y|})\textrm{ and }h_{2,j,l}=O(|y|^{2j-1}e^{-|y|})\textrm{ as }y\goto \pm\infty.
\ee
Now, let us consider the solution $(f_1, f_2)$ to the system 
$$L_+(f_1)=h_1\textrm{ and }L_-(f_2)=h_2,$$
with $(h_1, Q')=0$ and $(h_2, Q)=0$. Then, for $y\geq 1$ for instance, we define
$$g_1(y)=Q'(y)\int_1^y\frac{d\sigma}{Q'(\sigma)^2}\textrm{ and }g_2(y)=Q(y)\int_1^y\frac{d\sigma}{Q(\sigma)^2},$$
so that $(Q', g_1)$ forms a basis of solutions to the second order ordinary differential equation $L_+(f)=0$, while $(Q, g_2)$ forms a basis of solutions to the second order ordinary differential equation $L_-(f)=0$. Note that the decay properties of $Q$ and $Q'$ immediately yield
$$g_1(y)=O(e^y)\textrm{ and }g_2(y)=O(e^y)\textrm{ as }y\goto +\infty.$$
Furthermore, using the variation of constants method, we find that a solution is given by\footnote{note that solutions are given up to an element of the kernel, but adjusting this element is irrelevant.}:
$$f_1(y)=-g_1(y)\left(\int_y^{+\infty}h_1(\sigma)Q'(\sigma)d\sigma\right)+Q'(y)\left(\int_1^yh_1(\sigma)g_1(\sigma)d\sigma\right),$$
$$f_2(y)=-g_2(y)\left(\int_y^{+\infty}h_2(\sigma)Q(\sigma)d\sigma\right)+Q(y)\left(\int_1^yh_2(\sigma)g_2(\sigma)d\sigma\right).$$
Thus, for any integer $n$, if 
$$h_1=O(|y|^ne^{-|y|})\textrm{ and }h_2(y)=O(|y|^ne^{-|y|})\textrm{ as }y\goto \pm\infty,$$
then, we obtain
$$f_1=O(|y|^{n+1}e^{-|y|})\textrm{ and }f_2(y)=O(|y|^{n+1}e^{-|y|})\textrm{ as }y\goto \pm\infty.$$
Applying this observation to the system \eqref{eqTjlSjl} with the choice $n=2(j+l)-1$ yields, in view of \eqref{decayeqTjSj}, the decay \eqref{decayTjlSjl}. This concludes the proof of the lemma.
\end{proof}

In view of Lemma \ref{lemma:generalstrat}, the proof of Proposition \ref{prop:approxsol} will follow from the verification that 
$$c_{n,1,0}=c_{n,0,1}=c_{n,2,0}=c_{n,0,2}=0\textrm{ for }n=1,2,\, c_{1,1,1}=0\textrm{ and }c_{2,1,1}=-2.$$

\vspace{0.3cm}

{\bf step 2} Computation of $c_{1,1,0}$ and $c_{2,1,0}$.\\

We identify the terms homogeneous of order $(1,0)$ in \eqref{eqPb} and get:
\be\label{eqT1S1}
\left\{\begin{array}{l}
\ds L_+(T_{1,0})=\frac{(N-1)\a}{2\b_\infty}Q'+\b_\infty yQ-c_{1,1,0}\b_\infty yQ+c_{2,1,0} yQ,\\
\ds L_-(S_{1,0})= - c_{1,1,0}\Lambda Q.
\end{array}\right.
\ee
Now, note that 
\bea\label{classico}
\left(\frac{(N-1)\a}{2\b_\infty}Q'+\b_\infty yQ, Q'\right)&=&\frac{(N-1)\a}{2\b_\infty}\int (Q')^2-\frac{\b_\infty}{2}\int Q^2\\
\nn&=&\frac{\b_\infty}{2}\left(\frac{p+3}{p-1}\int (Q')^2-\int Q^2\right),
\eea
where we used in the last inequality the definition of $\a$ and $\b_\infty$ given by \eqref{defbeta}, \fref{defalpha}. Now, taking the scalar product of the equation \eqref{eq:groundstate} with $Q+(p+1)yQ'$, and integrating by parts, yields
\be\label{calQ}
\int Q^2=\frac{p+3}{p-1}\int (Q')^2,
\ee
which together with \eqref{classico} implies
$$\left(\frac{(N-1)\a}{2\b_\infty}Q'+\b_\infty yQ, Q'\right)=0.$$
Together with \eqref{eqT1S1}, we obtain
$$(h_{1,1,0}, Q')=(h_{2,1,0}, Q)=0$$
which together with \eqref{choicec1jlc2jl} yields
$$c_{1,1,0}=c_{2,1,0}=0$$
as desired.\\

{\bf step 3} Computation of $c_{1,0,1}$ and $c_{2,0,1}$.\\

We identify the terms homogeneous of order $(1,0)$ in \eqref{eqPb} and get:
\be\label{eqT1S1bis}
\left\{\begin{array}{l}
\ds L_+(T_{0,1})=-c_{1,0,1}\b_\infty yQ+c_{2,0,1} yQ,\\
\ds L_-(S_{0,1})=- c_{1,0,1}\Lambda Q,
\end{array}\right.
\ee
which together with \eqref{choicec1jlc2jl} yields
$$c_{1,0,1}=c_{2,0,1}=0$$
as desired.\\

{\bf step 4} Computation of $c_{1,2,0}$ and $c_{2,2,0}$.\\

We identify the terms homogeneous of order $(2,0)$ in \eqref{eqPb} and get:
\be\label{eqT2S2}
\left\{\begin{array}{lll}
\ds L_+(T_{2,0})&=&\ds(1-\a)S_{1,0}+\frac{(N-1)\a}{2\b_\infty}T_{1,0}'-(N-1)\frac{\a^2}{4\b_\infty^2}yQ'\\
&&\ds+\frac{p(p-1)}{2}Q^{p-2}T_{1,0}^2+\frac{p-1}{2}Q^{p-2}S_{1,0}^2+\b_\infty yT_{1,0}\\
&&+\ds \frac{\a}{4}y^2Q-c_{1,2,0}\b_\infty yQ+c_{2,2,0} yQ,\\
\ds L_-(S_{2,0})&=&\ds-(1-\a)T_1+\frac{(N-1)\a}{2\b_\infty}S_{1,0}'+(p-1)Q^{p-2}T_{1,0}S_{1,0}\\
&&\ds+\b_\infty yS_{1,0}-(N-1)\frac{\a}{4\b_\infty}yQ(1-\a)- c_{1,2,0}\Lambda Q.
\end{array}\right.
\ee
Note from \eqref{eqT1S1} that $T_{1,0}$ is an odd function, while $S_{1,0}$ is an even function. In view of \eqref{eqT2S2}, this implies that 
$$h_{1,2,0}\textrm{ is even and }h_{2,2,0}\textrm{ is odd}.$$
In particular, since $Q$ is even and $Q'$ is odd, we obtain
$$(h_{1,2,0}, Q')=0\textrm{ and }(h_{2,2,0}, Q)=0,$$
which together with \eqref{choicec1jlc2jl} yields
$$c_{1,2,0}=c_{2,2,0}=0$$
as desired.\\

{\bf step 5} Computation of $c_{1,1,1}$ and $c_{2,1,1}$.\\

We identify the terms homogeneous of order $(1,1)$ in \eqref{eqPb} and get:
\be\label{eqT2S2bis}
\left\{\begin{array}{lll}
L_+(T_{1,1})=-\frac{(N-1)\a}{2\b_\infty^2}Q'+yQ -c_{1,1,1}\b_\infty yQ+c_{2,1,1} yQ,\\
L_-(S_{1,1})=- c_{1,1,1}\Lambda Q.
\end{array}\right.
\ee
In view of \eqref{eqT2S2bis}, we have
$$(h_{1,1,1},Q')= -\frac{(N-1)\a}{2\b_\infty^2}\int (Q')^2-\frac{1}{2}\int Q^2.$$
Using the computation 
$$\int (Q')^2=\frac{p-1}{p+3}\int Q^2$$
and the definition of $\a$ and $\b_\infty$ given by \eqref{defbeta}, \fref{defalpha}, we deduce
$$(h_{1,1,1},Q')=-\int Q^2,$$
which together with \eqref{choicec1jlc2jl} and the fact that $h_{2,1,1}=0$ yields
$$c_{1,1,1}=0\textrm{ and }c_{2,1,1}=-2$$
as desired.\\

{\bf step 6} Computation of $c_{1,0,1}$ and $c_{2,0,1}$.\\

We identify the terms homogeneous of order $(0,2)$ in \eqref{eqPb} and get:
\be\label{eqT2S2ter}
\left\{\begin{array}{l}
\ds L_+(T_{0,2})=-c_{1,0,2}\b_\infty yQ+c_{2,0,2} yQ,\\
\ds L_-(S_{0,2})=- c_{1,0,2}\Lambda Q,
\end{array}\right.
\ee
which together with \eqref{choicec1jlc2jl} yields
$$c_{1,0,2}=c_{2,0,2}=0$$
as desired.\\

{\bf step 7} Conclusion.\\

We therefore have constructed an approximate solution $\pbb$ of \eqref{eqPb} of the form \eqref{devPb}. The decay estimate \fref{deacypb} on $\pbb$ follows from \fref{decayTjlSjl}. The error $\Psi_{b, \tb}$  consists of a polynomial in $(T_{j,l},S_{j,l})_{j+l\leq k-1}$ with lower order $k$, the error between the Taylor expansion of the potential terms $\frac{\a b}{2\b}\frac{N-1}{1+\frac{\a b}{2\b}y}$ and $\frac{N-1}{1+\frac{b\a y}{2\b}}$ in \fref{eqPb}, and the error between the nonlinear term and its Taylor expansion. The first and second type of terms are easily treated using the uniform exponential decay  of $\pbb$. We need to be slightly more careful for the nonlinear term. Here we recall that given $z\in \mathbb{C}$, let $P_{k-1}(z)$ be the order $k-1$ Taylor polynomial of $z\mapsto (1+z)|1+z|^{p-1}$ at $z=0$, then from\footnote{to handle the case when $|z|\gg 1$.} $p<5\leq k$: $$\forall z\in \mathbb{C}, \ \ \left|(1+z)|1+z|^{p-1}-P_{k-1}(z)\right|\lesssim C_k|z|^k.$$ Let then $$\e_{b,\tb}=\pbb-Q,$$ we obtain the bound by homogeneity: 
\bee
&&\left|(Q+\e_{b,\tb})|Q+\e_{b,\tb}|^{p-1}-Q^pP_{k-1}\left(\frac{\e_{b,\tb}}{Q}\right)\right|\lesssim C_kQ^p\frac{|\e_{b,\tb}|^k}{Q^k}\\
& \lesssim & Q^p\sum_{1\leq j+l\leq k-1}\left(\frac{|b|^j|\tb|^l(|T_{j,l}|+|S_{j,l}|)}{Q}\right)^k.
\eee
On the other hand, \fref{decayTjlSjl} ensures the uniform bound $$\frac{|T_{j,l}|+|S_{j,l}|}{Q}\lesssim 1+|y|^{c_{k}}, \ \ j+l\leq k-1,$$ and hence the bound: $$\left|(Q+\e_b)|Q+\e_b|^{p-1}-Q^pP_{k-1}\left(\frac{\e_b}{Q}\right)\right|\lesssim (|b|+|\tb|)^k|y|^{c_k}e^{-|y|},$$ and the control \fref{error} of $\Psi_{b, \tb}$ follows.\\
This concludes the proof of proposition \ref{prop:approxsol}.


\subsection{Further properties of $\qbb$}


In order to avoid artificial troubles near the origin after renormalization, we introduce a smooth cut off function 
\be
\label{cutzetab}
\zeta(y)=\left\{\begin{array}{ll}0\ \ \mbox{for}\ \ y\leq -2\\ 1\ \ \mbox{for}\ \ y\geq -1\end{array}\right., \ \ \zeta_b(y)=\zeta(\sqrt{b} y),
\ee  and define once and for all for the rest of this paper: 
\be
\label{cneoneon}
\qbb(y)=\zeta_b(y)\pbb(y)e^{-i\b y-ib\frac{y^2}{4}}.
\ee 
Let us rewrite the $\qbb$ equation using \fref{th1:eqrenormlaizedv}, \fref{defpsib} in the form which we will use in the forthcoming bootstrap argument. 

\begin{Cor}[$\qbb$ equation in original variables]
\label{Cor:eqtq}
Given $\mathcal C^1$ modulation parameters $(\lambda(t),r(t),\gamma(t), \tb(t))$ such that
\be
\label{neonvoenoe}
0<b(t)=\frac{2\b}{\a}\frac{\l(t)}{r(t)}\ll 1,
\ee let $\tq$ be given by
\be\label{defqtilde}
\tq(t,x)=\frac{1}{\l^{\frac{2}{p-1}}}Q_{b(t), \tb(t)}\left(\frac{r-r(t)}{\l(t)}\right)e^{i\gamma(t)}.
\ee
Then $\tq$ is a smooth radially symmetric function which satisfies:
\be
\label{eqwgobale}
i\pa_t\tq+\Delta\tq+\tq|\tq|^{p-1}=\psi=\frac{1}{\lambda(t)^{\frac{2p}{p-1}}}\Psi\left(t,\frac{r-r(t)}{\lambda(t)}\right)e^{i\gamma(t)}
\ee
with
\bea\label{defpsiacaculaer}
\Psi &=&  -(\gamma_s-1-\b^2)\qbb-  i\left(\lsl+b-\P_1\right)\left(\Lambda\qbb-b\pa_b\qbb\right)\\
\nonumber&-& i\left(\frac{r_s}{\l}+2\b\right)\left(\pa_y\qbb+\frac{\alpha}{2\beta}b^2\pa_b\qbb\right)+i\left(b_s+(1-\a)b^2-\frac{b}{\b}\P_2-b\P_1\right)\pa_b\qbb\\
\nn&+&i(\b_s-\P_2)\left(\pa_{\tb}\qbb+\frac{b}{\b}\pa_b\qbb\right)+ O\left(\frac{e^{-|y|}}{b^{c_k}}{\bf 1}_{y\sim \frac{1}{\sqrt{b}}}+(|b|+|\tb|)^k\zeta_b|y|^{c_k}e^{-|y|}\right).
\eea
\end{Cor}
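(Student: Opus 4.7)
The plan is to carry out a direct chain-rule expansion of $i\pa_t\tq+\Delta\tq+\tq|\tq|^{p-1}$ using the ansatz \eqref{defqtilde}, and then to read off the output via the characterising identity \eqref{defpsib} for $Q_{b,\tb}$. Written in the slow variables $(s,y)$ with $y=(r-r(t))/\l(t)$ and $ds/dt=1/\l^2$, the radial Laplacian $\pa_r^2+\frac{N-1}{r}\pa_r$ becomes, once the prefactor $\l^{-\frac{2}{p-1}}e^{i\gamma}$ is pulled out,
$$\pa_y^2+\frac{N-1}{1+\frac{\a b}{2\b}y}\frac{\a b}{2\b}\pa_y,$$
which is exactly the operator appearing in \eqref{th1:eqrenormlaizedv}. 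After the Galilean/pseudo-conformal factorisation $\qbb=\zeta_b\pbb e^{-i\b y-iby^2/4}$, the computation reduces, away from the support of $\pa_y\zeta_b$, to the identity \eqref{defpsib}. In particular, if the five modulation relations $\lsl=-b+\P_1$, $r_s/\l=-2\b$, $\gamma_s=1+\b^2$, $\tb_s=\P_2$ together with the constraint-implied $b_s=-(1-\a)b^2+\frac{b}{\b}\P_2+b\P_1$ held exactly along the flow, the only leftover would be the high order profile error $\Psi_{b,\tb}=O(b^k|y|^{c_k}e^{-|y|})$ supplied by Proposition \ref{prop:approxsol}.

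The next step is to isolate the mismatch coming from the fact that along the actual flow these five relations only hold up to $\gamma_s-1-\b^2$, $\lsl+b-\P_1$, $r_s/\l+2\b$, $\b_s-\P_2$ and $b_s+(1-\a)b^2-\frac{b}{\b}\P_2-b\P_1$. The delicate point is that $Q_{b,\tb}$ depends on two internal parameters $(b,\tb)$, while $b$ is itself constrained geometrically by \eqref{neonvoenoe}. When $\pa_s$ hits $\qbb$ it produces $b_s\pa_b\qbb+\tb_s\pa_{\tb}\qbb$, and the algebraic identity \eqref{noncneo} must be used to redistribute $b_s\pa_b$ among the other modulation errors. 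Carrying this through, the resulting weights are precisely those prescribed in \eqref{defpsiacaculaer}: the scaling error $\lsl+b-\P_1$ multiplies $\Lambda\qbb-b\pa_b\qbb$, the translation error $r_s/\l+2\b$ multiplies $\pa_y\qbb+\frac{\a}{2\b}b^2\pa_b\qbb$, the Galilean error $\b_s-\P_2$ multiplies $\pa_{\tb}\qbb+\frac{b}{\b}\pa_b\qbb$, and the phase error $\gamma_s-1-\b^2$ multiplies $\qbb$, as it should from differentiating the overall $e^{i\gamma}$ factor.

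Finally I would handle the cutoff $\zeta_b(y)=\zeta(\sqrt{b}y)$ from \eqref{cutzetab}. Its commutators with $\pa_y^2$ and with $\pa_s$ are supported in the transition zone $\sqrt{b}y\in[-2,-1]$, i.e.\ $|y|\sim 1/\sqrt{b}$. On that region \eqref{deacypb} yields $\pbb=O(e^{-c/\sqrt{b}})$ while the derivatives of $\zeta_b$ produce at most finite powers of $\sqrt{b}$ and polynomial factors in $y$; converting the resulting pointwise bound into the form $b^{-c_k}e^{-|y|}\mathbf{1}_{y\sim 1/\sqrt{b}}$ gives the first term in the last line of \eqref{defpsiacaculaer}. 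The bulk contribution from \eqref{error}, being multiplied by the unimodular Galilean/pseudo-conformal factor, retains its pointwise bound and produces the $(|b|+|\tb|)^k\zeta_b|y|^{c_k}e^{-|y|}$ remainder. The main obstacle is essentially bookkeeping: correctly tracking how each $\pa_b$-derivative of $\qbb$ is reshuffled among the four independent modulation parameters through the constraint \eqref{vnieveoev}; once that redistribution is done and \eqref{defpsib} is invoked, the other steps are mechanical.
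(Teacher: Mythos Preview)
Your proposal is correct and follows the same approach as the paper: the paper's proof simply observes that the cutoff $\zeta_b$ makes $\tq$ vanish near the origin (hence smooth and radially symmetric), that the exponential decay of $\pbb$ turns the commutator terms with $\zeta_b$ into an $O(b^{-c_k}e^{-|y|}{\bf 1}_{y\sim 1/\sqrt{b}})$ correction, and that \eqref{defpsiacaculaer} then follows directly from \eqref{th1:eqrenormlaizedv}, \eqref{noncneo}, \eqref{defpsib}, \eqref{error}. You have spelled out in more detail precisely the same chain-rule/renormalization computation and the same redistribution of the $b_s\pa_b\qbb$ contribution via \eqref{noncneo}, so there is no substantive difference.
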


\begin{proof}[Proof of Corollary \ref{Cor:eqtq}]
We simply observe from \fref{cneoneon}, \fref{neonvoenoe} that $\tilde{Q}$ is identically zero near the origin and hence \fref{defqtilde} defines a well localized smooth radially symmetric function. The exponential decay in space of $\pbb$ ensures that the localization procedures perturbs the error term in \fref{eqPb} by an $O\left(\frac{e^{-|y|}}{b^{c_k}}{\bf 1}_{y\sim \frac{1}{\sqrt{b}}}\right)$
and the estimate \fref{defpsiacaculaer} now directly follows from \fref{th1:eqrenormlaizedv}, \fref{noncneo}, \fref{defpsib}, \fref{error}.
\end{proof}


\section{Setting up the analysis}
\label{sectionboot}

The aim of this section is to set up the bootstrap argument. 

\subsection{Choice of initial data}. Let us start with soling the system of exact modulation equations formally predicted by the $\qbb$ construction. Is it easily seen that this system formally predicts a {\it stable} blow up in the ring regime. We shall simply need the following claim which proof is elementary and postponed Appendix \ref{sec:exactint}.

\begin{Lemma}[Integration of the exact system of modulation equations]
\label{intergationexactetimet}
There exists $t_e<0$ small enough and a solution $(\l_e, b_e, \tb_e, r_e, \gamma_e)$ to the dynamical system: 
\be
\label{systexacttimet}
\left\{\begin{array}{llll} \lsl+b=\matchal P_1(b,\tilde{\b}),\\ \rsl+2\beta=0,\\ \tilde{\beta}_s=\mathcal P_2(b,\tilde{\beta}),\\
\gamma_s=1+\b^2,\\ \frac{ds}{dt}=\frac{1}{\l^2},\\ b=\frac{2\beta}{\alpha}\frac{\l}{r},\ \ \beta=\beta_{\infty}+\tilde{\b},\end{array}\right.
\ee
which is defined on $[t_e,0)$. Moreover, this solution satisfies the following bounds
\be\label{behavb}
b_e(t)=\frac{1}{1+\a}\left(\frac{2(1+\a)\b_\infty}{\a g_\infty}\right)^{\frac{2}{1+\a}}|t|^{\frac{1-\a}{1+\a}}\left(1+O\left(\log(|t|)|t|^{\frac{1-\a}{1+\a}}\right)\right),
\ee
\be\label{behavlambda}
\l_e(t)=\left(\frac{2(1+\a)\b_\infty}{\a g_\infty}\right)^{\frac{1}{1+\a}}|t|^{\frac{1}{1+\a}}\left(1+O\left(\log(|t|)|t|^{\frac{1-\a}{1+\a}}\right)\right),
\ee
\be\label{behavr}
r_e(t)=g_\infty\left(\frac{2(1+\a)\b_\infty}{\a g_\infty}\right)^{\frac{\a}{1+\a}}|t|^{\frac{\a}{1+\a}}\left(1+O\left(\log(|t|)|t|^{\frac{1-\a}{1+\a}}\right)\right),
\ee
\be\label{behavbt}
\tb_e(t)=O\left(|t|^{\frac{2(1-\a)}{1+\a}}\right),
\ee
and
\be\label{behavgamma}
\gamma_e(t)=(1+\b_\infty^2)\left(\frac{1-\a}{1+\a}\right)^{\frac{1-\a}{1+\a}}\left(\frac{2(1-\a)\b_\infty}{\a g_\infty}\right)^{-\frac{2}{1+\a}}|t|^{-\frac{1-\a}{1+\a}}+O(\log(|t|))
\ee 
for some universal constant $|g_\infty-1|\ll 1$.
\end{Lemma}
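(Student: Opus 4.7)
The system \eqref{systexacttimet} is an autonomous ODE whose right-hand side is polynomial/rational, so the proof is essentially a leading-order self-similar analysis followed by a short-time backward contraction, and the obstacle is only careful bookkeeping of logarithmic corrections. First I would switch from $s$ to $t$ via $ds/dt=1/\l^2$, rewriting the system as
\[
\l\l_t = -b+\P_1(b,\tb),\quad \l r_t=-2\b,\quad \l^2\tb_t=\P_2(b,\tb),\quad \l^2\gamma_t=1+\b^2,
\]
together with the algebraic relations $b=2\b\l/(\alpha r)$ and $\b=\b_\infty+\tb$; the unknowns become $(\l,r,\tb,\gamma)$ on an interval $[t_e,0)$.

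Dropping $\P_1,\P_2$ and setting $\tb\equiv 0$, the first two equations yield $\a r\l_t=\l r_t$, hence the conservation law $r=g_\infty\l^{\a}$ for some positive integration constant $g_\infty$. Substituting back into $\l r_t=-2\b_\infty$ produces
\[
\frac{g_\infty\a}{\a+1}(\l^{\a+1})_t = -2\b_\infty,
\]
which integrates with the boundary condition $\l(0)=0$ to $\l_0(t)^{\a+1}=\frac{2(1+\a)\b_\infty}{g_\infty\a}|t|$. This explicit profile instantly gives the principal terms in \eqref{behavlambda}, \eqref{behavr}, and via $b=2\b_\infty\l_0/(\a r_0)\sim\l_0^{1-\a}$ the principal term in \eqref{behavb}. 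A direct integration of $\gamma_t=(1+\b_\infty^2)/\l_0^2\sim C|t|^{-2/(1+\a)}$ from $t$ to a fixed reference time produces the singular contribution $|t|^{-(1-\a)/(1+\a)}$ in \eqref{behavgamma}.

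To include $\tb$, note from \eqref{defP1P2} that $\tb_s=-2b\tb+O(b^3+\tb^3)$; the linear part is attracting forward in $s$, and the forced slow manifold is $\tb=Ab^2+O(b^3)$, where $A$ is fixed by algebraic matching of the $b^3$ coefficient of $\P_2$. This produces \eqref{behavbt}. I would then set
$$\l=\l_0(1+\eta_1),\quad r=r_0(1+\eta_2),\quad \tb=Ab_0^2+\eta_3,$$
substitute into the full system, and rewrite it as a fixed-point problem for $(\eta_1,\eta_2,\eta_3)$ on $[t_e,0)$ with boundary data $\eta_i(0)=0$. Because the monomials in $\P_1$ and in $\P_2+2b\tb$ have total degree $\geq 3$, the resulting forcing is strictly of higher order than the leading profile, and a short-interval Banach contraction yields a unique solution with $|\eta_i|\lesssim \log(|t|)\,|t|^{(1-\a)/(1+\a)}$, which is exactly the claimed error size. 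Feeding these back into $\gamma_t=(1+\b^2)/\l^2$ gives \eqref{behavgamma} up to an $O(\log|t|)$ bounded remainder.

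The only delicate point is the appearance of $\log(|t|)$ in the corrections: it arises precisely when integrating a source of size $b_0^2\sim|t|^{2(1-\a)/(1+\a)}$ against the singular weight $1/\l_0^2\sim|t|^{-2/(1+\a)}$, where the powers cancel exactly and produce a logarithmic divergence. This logarithm drives the uniform error in \eqref{behavb}--\eqref{behavr}, and it also determines how small $|t_e|$ must be chosen for the contraction to close. A secondary but straightforward point is that the one free constant in the homogeneous mode $\tb_{\mathrm{hom}}\sim C\l^2$ is smaller than the forced $Ab_0^2$ term, so it does not affect the leading asymptotics and may simply be set to zero by the boundary condition $\eta_3(0)=0$.
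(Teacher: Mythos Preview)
Your approach is sound but differs from the paper's. The paper first proves an auxiliary lemma in the rescaled time $s$ (with $ds/dt = 1/\lambda^2$), prescribing data at a large finite $s_0$ and integrating \emph{forward} to $s=+\infty$. It introduces the auxiliary variable $g = r/\lambda^\alpha$, which satisfies $g_s = -\alpha\P_1(b,\tilde\beta)g$ and is therefore essentially conserved (since $\P_1$ has degree $\ge 3$). The analysis reduces to a bootstrap on $(b,g,\tilde\beta)$: from $b_s + (1-\alpha)b^2 = b\P_1 + (b/\beta)\P_2$ one gets $\bigl|d(1/b)/ds - (1-\alpha)\bigr| \lesssim 1/s$, whose integration produces exactly the $\log s$ correction; then $\tilde\beta_s = -2b\tilde\beta + O(1/s^3)$ gives $|\tilde\beta| \lesssim 1/s^2$, and $|g_s|\lesssim 1/s^3$ forces $g\to g_\infty$. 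The translation back to $t$ via $-t = \int_s^\infty \lambda^2$ gives the stated asymptotics. Your direct-in-$t$ perturbation around the explicit self-similar profile, with a Banach fixed point on $[t_e,0)$, is a legitimate alternative; it trades the bootstrap for a contraction and avoids the change of time variable, at the price of having to linearize carefully around a profile with several coupled components.

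One point needs correcting: your explanation of the logarithm is wrong. You claim the powers in $b_0^2/\lambda_0^2$ cancel to give $|t|^{-1}$, but in fact
\[
\frac{b_0^2}{\lambda_0^2} \sim |t|^{\frac{2(1-\alpha)}{1+\alpha} - \frac{2}{1+\alpha}} = |t|^{-\frac{2\alpha}{1+\alpha}},
\]
which equals $|t|^{-1}$ only for $\alpha=1$; for $\alpha<1$ this is integrable and produces no logarithm. In the paper the $\log$ comes from the crude estimate $\bigl|d(1/b)/ds - (1-\alpha)\bigr| \lesssim b \sim 1/s$, obtained by bounding $\P_1,\P_2$ by $O(b^2+|\tilde\beta|^2)$ rather than the sharper available power. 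In your $t$-variable framework the analogue is that the linearized equations for $(\eta_1,\eta_b)$ carry a coefficient $\dot b_0/b_0\sim 1/|t|$, so any forcing at the homogeneous rate produces a resonance and hence a $\log$. This does not break your argument---the fixed point still closes with the weight $\log(|t|)\,|t|^{(1-\alpha)/(1+\alpha)}$ appearing in the statement---but the mechanism you describe is not the correct one.
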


 From now on, we choose the integer $k$ appearing in Proposition \ref{prop:approxsol} such that
\be\label{choiceofk}
k>\frac{2}{1-\a}+1.
\ee
Given $t_e<\bar{t}<0$ small, we let $u(t)$ be the solution to \eqref{nls} with well prepared
initial data at $t=\bar{t}$ given explicitly by:
\be\label{initialdata}
u(\bar{t},r)=\frac{1}{\l_e(\bar{t})^{\frac{2}{p-1}}}Q_{b_e(\bar{t}), \tb_e(\bar{t})}\left(\frac{r-r_e(\bar{t})}{\l_e(\bar{t})}\right)e^{i\gamma_e(\bar{t})}.
\ee
Our aim is to derive bounds on $u$ backwards on a time interval independent of $\bar{t}$ as $\bar{t}\to 0$. We describe in this section the bootstrap regime in which we will control the solution, and derive preliminary estimates on the flow which prepare the monotonicity formula of section \ref{secitonmarowetw}.

\subsection{The modulation argument}


We prove in this section a standard modulation lemma which relies on the implicit function theorem and the {\it mass subcritical} non degeneracy $(Q,\Lambda Q)\neq 0.$ 

\begin{Lemma}[Modulation]
\label{lemma:hny}
There exists a universal constant $\delta>0$ such that the following holds. Let $u$ be a radially symmetric function of the form $$u(r)=\frac{1}{\l_0^{\frac{2}{p-1}}}Q_{b_0,\tilde{\beta}_0}\left(\frac{r-r_0}{\l_0}\right)e^{i\gamma_0}+\tilde{u}_0(r)$$ with $$\lambda_0,r_0>0,  \ \ \beta_0=\beta_{\infty}+\tilde{\beta}_0,\ \  b_0=\frac{2\beta_0}{\alpha}\frac{\l_0}{r_0},$$ the a priori bound
\be
\label{aprioibound}
\frac{r_0}{\l_0^{\alpha}}\gtrsim 1
\ee
and the a priori smallness: 
\be
\label{arpojvepjejpe}
 0<|b_0|+|\tilde{\beta}_0|+\|\ut_0\|_{L^2}<\delta.
 \ee Then there exists a unique decomposition $$u(t,r)=\frac{1}{\l_1^{\frac{2}{p-1}}}Q_{b_1,\tilde{\beta}_1}\left(\frac{r-r_1}{\l_0}\right)e^{i\gamma_1}+\ut_1(r)$$ with $$\beta_1=\beta_{\infty}+\tilde{\beta}_1, \ \  b_1=\frac{2\beta_1}{\alpha}\frac{\l_1}{r_1},$$ such that $$\ut_1(r)=\frac{1}{\l_1^{\frac{2}{p-1}}}\et_1\left(\frac{r-r_1}{\l_1}\right)e^{i\gamma_1-i\b_1\frac{r-r_1}{\l_1}}$$  satisfies the orthogonality conditions 
$$(\Re(\et_1),\zeta_{b_1} yQ)=(\Re(\et_1),\zeta_{b_1}Q)=\left(\Im(\et_1),\zeta_{b_1}\Lambda Q\right)=\left(\Im(\et_1),\zeta_{b_1}\pa_yQ\right)=0.$$

Moreover, there holds the smallness: 
\be
\label{neioneoneone}
\left|\frac{\lambda_1}{\l_0}-1\right|+\frac{|r_0-r_1|}{\l_0}+|\tb_0-\tb_1|+|\gamma_0-\gamma_1|+\|\ut_1\|_{L^2}\lesssim \delta.
\ee

\end{Lemma}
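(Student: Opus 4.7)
The approach is the implicit function theorem, critically exploiting the \emph{mass subcritical non-degeneracy}
$$(Q,\Lambda Q)=\frac{5-p}{2(p-1)}\neq 0$$
which was already the key algebraic feature in Proposition~\ref{prop:approxsol} and which distinguishes this situation from the degenerate mass critical setting of \cite{RSmin}. First I would define the map
$$F(\l_1,r_1,\tb_1,\gamma_1)=\Bigl((\Re\et_1,\zeta_{b_1}yQ),\,(\Re\et_1,\zeta_{b_1}Q),\,(\Im\et_1,\zeta_{b_1}\Lambda Q),\,(\Im\et_1,\zeta_{b_1}Q')\Bigr),$$
where $b_1=\frac{2\b_1}{\a}\frac{\l_1}{r_1}$ is slaved to $(\l_1,r_1,\tb_1)$ and $\et_1$ is determined by the decomposition formula in the statement. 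The goal is to solve $F=0$ in a neighborhood of the reference point $(\l_1,r_1,\tb_1,\gamma_1)=(\l_0,r_0,\tb_0,\gamma_0)$, where $\et_1\equiv 0$ (and hence $F=0$) when $\ut_0\equiv 0$.

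The main step is to compute $dF$ at the reference point. Writing $\et_1$ explicitly as
$$\et_1(y)=\l_1^{\frac{2}{p-1}}u(\l_1 y+r_1)\,e^{-i\gamma_1+i\b_1 y}-\zeta_{b_1}(y)P_{b_1,\tb_1}(y)\,e^{-ib_1 y^2/4},$$
using the expansion $P_{b,\tb}=Q+O(b+|\tb|)$, and fixing $T_{0,1}=S_{0,1}=0$ (a free choice in the kernels of $L_\pm$ from Step~3 of Proposition~\ref{prop:approxsol}), a direct computation yields, modulo corrections of size $O(b_0+|\tb_0|+\|\ut_0\|_{L^2})$,
$$\p_{\l_1}\et_1\simeq\frac{\Lambda Q-i\b_\infty yQ}{\l_0},\ \ \p_{r_1}\et_1\simeq\frac{Q'-i\b_\infty Q}{\l_0},\ \ \p_{\tb_1}\et_1\simeq iyQ,\ \ \p_{\gamma_1}\et_1\simeq -iQ.$$
Using parity ($Q,\Lambda Q$ even; $Q',yQ$ odd) together with the vanishings $(Q,Q')=(Q,yQ)=(yQ,\Lambda Q)=0$ (the last one obtained by integration by parts using $\Lambda Q=\frac{2}{p-1}Q+yQ'$) and the explicit nonzero value $(yQ,Q')=-\tfrac12\|Q\|_{L^2}^2$, the Jacobian becomes lower triangular after reordering the columns as $(r_1,\l_1,\gamma_1,\tb_1)$, with diagonal entries
$$-\frac{\|Q\|_{L^2}^2}{2\l_0},\ \ \frac{(\Lambda Q,Q)}{\l_0},\ \ -(Q,\Lambda Q),\ \ -\frac{\|Q\|_{L^2}^2}{2}.$$
Each is nonzero thanks to the mass subcritical identity, so $\det(dF)\simeq\frac{\|Q\|_{L^2}^4(Q,\Lambda Q)^2}{4\l_0^2}\neq 0$, and $dF$ is invertible with inverse bounded uniformly as $\delta\to 0$.

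The implicit function theorem then delivers, for $\delta>0$ sufficiently small, a unique $\mathcal{C}^1$ solution $(\l_1,r_1,\tb_1,\gamma_1)$ of $F=0$ in a fixed neighborhood of the reference point. The smallness estimate \eqref{neioneoneone} follows from the quantitative inverse mapping theorem combined with the straightforward bound $|F(\l_0,r_0,\tb_0,\gamma_0;u)|\lesssim\|\ut_0\|_{L^2}$, which measures the discrepancy between $u$ and the reference configuration. The orthogonality conditions in the statement are exactly the zero set of $F$, hence hold by construction.

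The only mild technical obstacle is to verify that neither the $b_1$-derivatives of the cutoff $\zeta_{b_1}(y)=\zeta(\sqrt{b_1}\,y)$ nor the slaving relation $b_1=\frac{2\b_1}{\a}\frac{\l_1}{r_1}$ introduces singular terms in the Jacobian as $b_0\to 0$. The cutoff derivatives are supported on the shell $|y|\sim 1/\sqrt{b_1}$, where $Q$ and all the $T_{j,l},S_{j,l}$ decay exponentially, so they contribute negligibly to the scalar products. As for the slaving relation, $\p b_1/\p(\l_1,r_1,\tb_1)$ is of size $b_0$ times appropriate inverse lengths, but it always multiplies $\p_b P_{b,\tb}$, which is exponentially localized, so the induced correction remains of lower order compared to the leading diagonal entries identified above and does not spoil the invertibility of $dF$.
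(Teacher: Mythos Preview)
Your proposal is correct and follows essentially the same route as the paper: implicit function theorem, the same four infinitesimal deformations $\Lambda Q-i\beta_\infty yQ$, $Q'-i\beta_\infty Q$, $iyQ$, $-iQ$, and the same mass subcritical non-degeneracy $(Q,\Lambda Q)\neq 0$ to make the Jacobian nonsingular. The paper works in the dimensionless variables $(z,\mu,\gamma,\tilde\beta)=((r_1-r_0)/\l_0,\l_1/\l_0,\gamma_1-\gamma_0,\tb_1-\tb_0)$ rather than $(\l_1,r_1,\tb_1,\gamma_1)$ directly, but the linear algebra is the same and your lower-triangular observation is a clean way to see the determinant.

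One point deserves sharpening. Your ``straightforward bound'' $|F(\l_0,r_0,\tb_0,\gamma_0;u)|\lesssim\|\ut_0\|_{L^2}$ is precisely where the a priori hypothesis \eqref{aprioibound} is used, and it is not quite automatic. The components of $F$ are \emph{one-dimensional} integrals in $y$ against $\zeta_{b_1}T^{(j)}$, whereas $\|\ut_0\|_{L^2}$ is the $N$-dimensional radial $L^2$ norm carrying the weight $r^{N-1}$. Passing from one to the other on the support of the test functions (localized near $r\sim r_0$) and keeping track of the scaling factor $\l_0^{2/(p-1)}/\l_0$ produces a coefficient $(r_0/\l_0^{\alpha})^{-(N-1)/2}$, which is bounded exactly by \eqref{aprioibound}. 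The paper records this as \eqref{nekoneonenoe}. Without that hypothesis the bound could degenerate, so it is worth saying explicitly rather than calling it straightforward.
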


\begin{proof}[Proof of Lemma \ref{lemma:hny}]
This is  a standard consequence of the implicit function theorem. We have by assumption:
$$u(r)=\frac{1}{\lambda_0^{\frac{2}{p-1}}}Q_{b_0,\tilde{\beta}_0}\left(\frac{r-r_0}{\lambda_0}\right)e^{i\gamma_0}+\ut_0(r),$$
and we wish to introduce a modified decomposition
$$u(r)=\frac{1}{\lambda_1^{\frac{2}{p-1}}}Q_{b_1,\tilde{\beta}_1}\left(\frac{r-r_1}{\lambda_1}\right)e^{i\gamma_1}+\ut_1(r).$$
Comparing the decompositions, we obtain the formula:
$$\ut_1(r)=\frac{1}{\lambda_0^{\frac{2}{p-1}}}Q_{b_0,\tilde{\beta}_0}\left(\frac{r-r_0}{\lambda_0}\right)e^{i\gamma_0}-\frac{1}{\lambda_1^{\frac{2}{p-1}}}Q_{b_1,\tilde{\beta}_1}\left(\frac{r-r_1}{\lambda_1}\right)e^{i\gamma_1}+\ut_0(r).$$
We now form the functional 
\be
\label{tfi}
F_{z,\mu,\gamma,\tilde{\beta}}(y)=\mu^{\frac{2}{p-1}}Q_{b_0,\tilde{\beta}_0}\left(\mu y+z\right)e^{-i\gamma+i(\b_0+\tilde{\beta}) y}-Q_{b_1,\tilde{\beta}_1}(y)e^{i\b_1 y}
\ee
with 
$$z=\frac{r_1-r_0}{\l_0}, \ \ \mu=\frac{\l_1}{\l_0}, \ \ \gamma=\gamma_1-\gamma_0, \ \ \tilde{\beta}=\tilde{\beta}_1-\tilde{\beta_0},$$
so that
$$\et_1(y)=F_{z,\mu,\gamma,\tilde{\beta}}(y)+\l_1^{\frac{2}{p-1}}\tu_0(\l_1y+r_1)e^{-i\gamma_1+i\b_1 y}.$$
We then define the scalar products:
\bee
\rho^{(j)} & = & \int_{-\infty}^{+\infty} \Re(\et_1)\zeta_{b_1}T^{(j)}dy\\
& = & \int_{-\infty}^{+\infty} \Re(F_{z,\mu,\gamma,\tilde{\beta}})\zeta_{b_1}T^{(j)}dy\\
&+&\Re\left(\int_0^{+\infty}\tu_0(r)\frac{\l_1^{\frac{2}{p-1}}}{\l_1}(\zeta_{b_1}\Lambda Q)\left(\frac{r-r_1}{\l_1}\right)e^{-i\gamma_1+i\b\frac{r-r_1}{\l_1}}dr\right)\textrm{ for }j=1, 2,
\eee
and
\bee
\rho^{(j)} & = & \int_{-\infty}^{+\infty} \Im(\et_1)\zeta_{b_1}T^{(j)}dy\\
& = & \int_{-\infty}^{+\infty} \Im(F_{z,\mu,\gamma,\tilde{\beta}})\zeta_{b_1}T^{(j)}dy\\
&+&\Im\left(\int_0^{+\infty}\tu_0(r)\frac{\l_1^{\frac{2}{p-1}}}{\l_1}(\zeta_{b_1}\Lambda Q)\left(\frac{r-r_1}{\l_1}\right)e^{-i\gamma_1+i\b\frac{r-r_1}{\l_1}}dr\right)\textrm{ for }j=3, 4,
\eee
where $$T^{(1)}=yQ, \  \ T^{(2)}=Q, \ \ T^{(3)}=\pa_yQ, \ \ T^{(4)}=\Lambda Q.$$ 
We now view $\rho=(\rho^{(j)})_{1\leq j\leq 4}$ as smooth functions of $(\ut_0,z,\mu,\tilde{\beta},\gamma)$. Observe that the bound \fref{aprioibound} ensures using the explicit formula \fref{defalpha} for $\a$:
\be
\label{nekoneonenoe}
|\rho(\ut_0,0,1,0,0)|\lesssim \left(\frac{r_0}{\l_0^{\frac{5-p}{(N-1)(p-1)}}}\right)^{-\frac{N-1}{2}}\|\ut_0\|_{L^2}\lesssim \delta.
\ee 
We now compute 
$$b_1=\frac{2\beta_1}{\alpha}\frac{\l_1}{r_1}=2(\beta_0+\tilde{\beta})\frac{\l_0}{\alpha r_0}\mu\frac{r_0}{r_1}=\left(1+\frac{\tilde{\beta}}{\beta_0}\right)b_0\mu\left(1+\frac{\alpha b_0}{2\beta_0}z\right)^{-1}.$$ We thus obtain using $$(Q_{b,\tilde{\b}})_{|(b,\tilde{\b})=(0,0)}=Qe^{-i\beta_{\infty} y}$$ the infinitesimal deformations:
$$\partial_zF_{|_{(z=0,\mu=1,\tilde{\beta}=0,\gamma=0)}}=Q'-i\b_{\infty} Q+O((|b_0|+|\tilde{\beta_0}|) e^{-c|y|}),$$
$$\partial_\mu F_{|_{(z=0,\mu=1,\tilde{\beta}=0,\gamma=0)}}=\Lambda Q-i\beta_\infty yQ+O((|b_0|+|\tilde{\beta_0}|) e^{-c|y|}),$$
$$\partial_{\tilde{\beta}}F_{|_{(z=0,\mu=1,\tilde{\beta}=0,\gamma=0)}}=iyQ+O((|b_0|+|\tilde{\beta_0}|) e^{-c|y|}),$$
$$\partial_{\gamma}F_{|_{(z=0,\mu=1,\tilde{\beta}=0,\gamma=0)}}=-iQ+O((|b_0|+|\tilde{\beta_0}|) e^{-c|y|}).$$
The Jacobian matrix of $\rho$ at $(\ut_0=0,z=0,\mu=1,\tilde{\beta}=0,\gamma=0)$ is therefore given by:
\bee
D&=&\left|\begin{array}{llll}  (Q',yQ) &  (\Lambda Q,yQ)  & 0 & 0\\ (Q',Q) & (\Lambda Q,Q) & 0& 0\\ -\beta_{\infty}(Q,Q')) &-\beta_{\infty}(yQ,Q') & (yQ,Q')& -(Q,Q')\\ -\beta_{\infty}(Q,\Lambda Q) &  -\beta(yQ,\Lambda Q)&(yQ,\Lambda Q)& -(Q,\Lambda Q)\end{array}\right|+O(|b_0|+|\tilde{\beta_0}|)\\
& = & -\frac{1}{16}\left(\frac{5-p}{p-1}\right)^2\|Q\|_{L^2}^8+O(|b_0|+|\tilde{\beta_0}|)\neq 0
\eee
from the smallness assumption \fref{arpojvepjejpe}. The existence of the desired decomposition now follows from the implicit function theorem, and the bound 
\fref{neioneoneone} follows from \fref{nekoneonenoe}.
\end{proof}


\subsection{Setting up the bootstrap}


Let $u(t,r)$ be the radially symmetric solution emanating from the data \fref{initialdata} at $t=\bar{t}$. From Lemma 
\ref{intergationexactetimet}, Lemma \ref{lemma:hny} and a straightforward continuity argument, we can find a small time $t^*<\bar{t}$ such that $u(t,r)$ admits on $[t^*,\bar{t}]$ a unique decomposition 
\be
\label{dencoeno}
u(t,r)=\frac{1}{\lambda(t)^{\frac{2}{p-1}}}v\left(t,\frac{r-r(t)}{\lambda(t)}\right)e^{i\gamma(t)}
\ee where we froze the law: 
\be
\label{frezzing}
b(t)=\frac{2\b}{\alpha}\frac{\l}{r}, \ \ \frac{ds}{dt}=\frac{1}{\l^2(t)},
\ee
and where there holds the decomposition
\be
\label{decompw}
w(s,y)=v(s,y)e^{i\b y}=Q_{b(t), \tb(t)}e^{i\b y}+\et(t,y), \ \ \et=\et_1+i\et_2
\ee with the orthogonality conditions: 
\be
\label{ortho}
(\et_1,\zeta_b yQ)=(\et_1,\zeta_bQ)=\left(\et_2,\zeta_b\Lambda Q\right)=\left(\et_2,\zeta_b\pa_yQ\right)=0.
\ee
Let us define the renormalized weight on the Lebesgue measure:
\be
\label{dfmu}
\mu=\left(1+\frac{\l(t)}{r(t)}y\right)^{N-1}=\left(1+\abb y\right)^{N-1}
\ee and the weighted Sobolev norms:
$$\|\e\|^2_{L^2_\mu}=\int |\e|^2\mu, \ \ \|\e\|^2_{H^1_\mu}=\int |\pa_y\e|^2\mu+\int |\e|^2\mu,$$
then from Lemma \ref{lemma:hny}, the decomposition \fref{dencoeno} holds as long as 
$$\frac{r(t)}{\l(t)^{\alpha}}\lesssim 1$$
and
$$|b(t)|+|\tb(t)|+\|\et(t)\|_{L^2_{\mu}}<\delta$$ for some universal constant $\delta>0$ small enough.\\
We also introduce the decomposition of the flow: 
\be
\label{defe}
u(t,r)=\tilde{Q}(t,x)+\tilde{u}(t,r), \ \ \tilde{u}(t,r)=\frac{1}{\lambda(t)^{\frac{2}{p-1}}}\e\left(t,\frac{r-r(t)}{\lambda(t)}\right)e^{i\gamma(t)}
\ee
and thus \be
\label{vneneononeov}
\et(s,y)=\e(s,y)e^{i\b y}.
\ee
From \fref{initialdata}, we have the well prepared data initialization: $$\e(\bar{t})=0, \ \ (\l, b, \tb, r, \gamma)(\overline{t})=(\l_e, b_e, \tb_e, r_e, \gamma_e)(\bar{t})$$ and we may thus consider a backward time $\underline{t}<\bar{t}$ such that $\forall t\in (\underline{t}, \bar{t}]$:
\be\label{boot1}
\|\e\|_{H^1_\mu}< \min(b, \l)\delta,
\ee
\be\label{boot2}
0<b<\delta,
\ee
\be\label{boot3}
|\tb|\leq b^{\frac{3}{2}},
\ee
and
\be\label{boot4}
\frac{g_\infty}{2}\leq\frac{r(t)}{\l(t)^{\alpha}}\leq 2 g_\infty.
\ee
In particular,  the modulation decomposition of Lemma \ref{lemma:hny} applies. Our claim is that the above regime is trapped. 

\begin{Prop}[Bootstrap]\label{prop:boot}
There holds $\forall t\in  (\underline{t}, \bar{t}]$:
\be\label{impboot1}
\|\e\|_{H^1_\mu}\lesssim \min\left(|t|^{\frac{1}{1+\a}}, \l\right)|t|^{\frac{1}{1+\a}},
\ee
\be\label{impboot2}
b=\frac{1}{1+\a}\left(\frac{2(1+\a)\b_\infty}{\a g_\infty}\right)^{\frac{2}{1+\a}}|t|^{\frac{1-\a}{1+\a}}\left(1+O(\log(|t|)|t|^{\frac{1-\a}{1+\a}})\right),
\ee
\be\label{impboot3}
|\tb|\lesssim |t|^{\frac{2(1-\a)}{1+\a}},
\ee
and
\be\label{impboot4}
\frac{r(t)}{\l(t)^{\alpha}}=g_\infty\left(1+O(\log(|t|)|t|^{\frac{1-\a}{1+\a}})\right).
\ee
\end{Prop}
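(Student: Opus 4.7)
My plan is to follow the Energy/Morawetz scheme outlined in the introduction. The first task is to convert the bootstrap into a pair of independent pieces of information: a set of modulation ODEs and a remainder equation for $\et$. Subtracting \fref{eqwgobale} from \fref{nls} and writing everything in the rescaled frame, I get an equation of the form $i\pa_s\et + \mathcal L\et + \Psi + \text{modulation terms} + \text{NL}(\et) = 0$, where $\mathcal L$ is the Galilean-shifted linearized operator near $Q$. Differentiating the four orthogonality conditions \fref{ortho} in $s$, substituting this equation, and using the algebraic identities \fref{structurekernel} together with the nondegeneracy $(Q,\Lambda Q)\neq 0$, I would produce the pointwise modulation estimates
\be\label{modeqplan}
\left|\lsl+b-\P_1\right|+\left|\frac{r_s}{\l}+2\b\right|+|\tb_s-\P_2|+|\tgamma_s-\b^2|\lesssim \|\et\|_{L^2_{\loc}}+b^k
\ee
where the $b^k$ comes from the forcing term $\Psi_{b,\tb}$ controlled by \fref{error}. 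These estimates will show that the true dynamics deviate from the exact system of Lemma \ref{intergationexactetimet} only by quantities driven by $\|\et\|_{L^2_\mu}$ and by the construction error.

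The second task is to close the bound \fref{impboot1}. I introduce the mixed Energy/Morawetz functional
\bee
\mathcal I(\tu)&=&\frac12\int|\n\tu|^2\,dx+\frac{1+\b^2}{2\l^2}\int|\tu|^2\,dx-\int\bigl[F(\tq+\tu)-F(\tq)-F'(\tq)\cdot\tu\bigr]\,dx\\
&&+\frac{\b}{\l}\Im\!\left(\int\phi\!\left(\frac{r}{r(t)}-1\right)\pa_r\tu\,\overline{\tu}\,dx\right)
\eee
with $F(u)=\frac{1}{p+1}|u|^{p+1}$ and $\phi$ a fixed cutoff. Rescaled into the $y$-variable, the quadratic part of $\mathcal I$ becomes, up to lower order terms, a copy of the Galilean-shifted linearized energy in weighted variables; the four orthogonality conditions in \fref{ortho} then kill the unstable directions $Q,\pa_yQ,yQ,\Lambda Q$ of the matrix operator $L=(L_+,L_-)$, and since the problem is no longer mass critical, $(Q,\Lambda Q)\neq 0$, so the usual Weinstein-type spectral argument yields the coercivity
\be\label{coerplan}
\mathcal I\gtrsim \frac{\|\et\|_{H^1_\mu}^2}{\l^2}-\frac{\|\et\|_{L^2_\mu}^2}{\l^2}\cdot o(1).
\ee

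The third task is the monotonicity. A direct computation using \fref{nls} and the modulation equations \eqref{modeqplan} yields, after careful use of the Morawetz boundary terms and the smallness of the nonlinear commutators, the pointwise bound
\be\label{monoplan}
\frac{d}{dt}\mathcal I=O\!\left(\frac{b\,\|\et\|_{H^1_\mu}^2}{\l^4}\right)+O\!\left(\frac{b^k}{\l^4}\right).
\ee
Since $-\lsl\sim b>0$, dividing by $\l^\theta$ for some large but fixed universal $\theta$ (larger than the constant in \fref{coerplan}) produces
$$\frac{d}{dt}\!\left(\frac{\mathcal I}{\l^\theta}\right)\gtrsim \frac{b}{\l^{4+\theta}}\bigl((\theta-C)\|\et\|_{H^1_\mu}^2\bigr)+O\!\left(\frac{b^k}{\l^{4+\theta}}\right).$$
Integrating this backwards from $t=\bar t$ (where $\et\equiv 0$ by \fref{initialdata}) kills the positive term and gives $\mathcal I(t)/\l^\theta\lesssim \int_t^{\bar t}b^k/\l^{4+\theta}\,d\tau$. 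The bootstrap on $b,\l$ and the relation $b\sim\l^{1-\a}$ from \fref{cnekocenoeno} make this integral estimable by a power of $\l(t)$, and the choice \fref{choiceofk} of $k$ ensures the resulting exponent beats $\min(|t|^{2/(1+\a)},\l^2)|t|^{2/(1+\a)}$, which proves \fref{impboot1}.

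The last task is \fref{impboot2}--\fref{impboot4}, which follow by reinjecting \fref{impboot1} into \eqref{modeqplan}. Since $\|\et\|_{H^1_\mu}$ is now controlled by a power of $\l$ much stronger than $b^k$, the modulation system becomes a small perturbation of \fref{systexacttimet}, and a direct Gr\"onwall-type argument in $s$ time (using that $-b_s\sim(1-\a)b^2$ so that $b\sim 1/s$, hence $b\l^2$ is integrable in $s$) upgrades the asymptotic expansions \fref{behavb}, \fref{behavbt} and \fref{behavr} of Lemma \ref{intergationexactetimet} to the $u$-flow. For $\tb$ I use that $\mathcal P_2=-2b\tb+O(b^3+b\tb^2)$, so the ODE $\tb_s=-2b\tb+O(b^3)+\text{remainder}$ has $\tb\sim b^2\sim|t|^{2(1-\a)/(1+\a)}$ as both homogeneous and particular solution, giving \fref{impboot3}. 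The main obstacle is the second task, namely checking that the $O(b^k)$ error from the approximate profile construction survives the Morawetz term (whose coefficient $\b/\l$ is not small) and that no spurious $1/\l^2$ factor spoils \eqref{monoplan}; this is where the sharp decay \fref{deacypb} of $\qbb$ and the precise algebra of $\P_2=-2b\tb+\cdots$, $\P_1=O(b^3)$ chosen in Proposition \ref{prop:approxsol} are used crucially.
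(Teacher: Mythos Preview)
Your plan is correct and matches the paper's proof: modulation equations (Lemma~\ref{modulationequations}), coercivity of $\mathcal I$ (Lemma~\ref{icoervie}), the monotonicity formula (Lemmas~\ref{lemma:timederivative} and~\ref{lemmajut}), backward integration of $\mathcal I/\l^\theta$ from $\bar t$, and finally a Gr\"onwall-type stability argument for the perturbed modulation system (Lemma~\ref{estimateperturbsyst}).

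One point needs sharpening: your modulation bound should be $\mathrm{Mod}\lesssim b\,\|\et\|_{H^1_\mu}+b^k$, not $\|\et\|_{L^2_{\mathrm{loc}}}+b^k$. The extra factor of $b$ arises because every linear-in-$\et$ term on the right-hand side of the $\et$-equation already carries a prefactor of size $b$ (the $\tfrac{\a b}{2\b}$ curvature potential, the transport term $\lsl\,\Lambda\et$ with $\lsl=-b+O(\mathrm{Mod})$, etc.), while the remaining terms are quadratic. This $b$ is precisely what puts the $\psi$-contributions to $\tfrac{d}{dt}\mathcal I$ into $O(b\|\et\|_{H^1_\mu}^2/\l^4)$ rather than $O(\|\et\|_{H^1_\mu}^2/\l^4)$; without it the weighted Lyapunov step cannot absorb the quadratic error for any finite $\theta$, so your stated monotonicity would not actually follow from your stated modulation estimate. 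The paper uses the same $b$-gain a second time to upgrade $\|\e\|_{H^1_\mu}\lesssim\l^{(1-\a)(k-1)}$ to $\mathrm{Mod}\lesssim b^k$, which is the exact hypothesis required by Lemma~\ref{estimateperturbsyst}.
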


Proposition \ref{prop:boot} is the heart of  the proof of Theorem \ref{thm2}, and relies on a refinement of the energy method designed in \cite{RSmin}.\\

We finish this section by deriving preliminary estimates on the decomposition \fref{defe} which are mostly a consequence of the construction of $\qbb$ and the choice of orthogonality conditions \fref{ortho}. These estimates prepare the monotonicity formula of section \ref{secitonmarowetw} which is the key ingredient of the proof.

\subsection{Modulation equations} 


We derive the modulation equations associated to the modulated parameters $(\l(t), r(t), \tb(t), \gamma(t))$. The parameter $b$ is computed from \fref{frezzing} which yields:
\be\label{lawb}
b_s+(1-\a)b^2-\frac{b}{\b}\P_2-b\P_1=\frac{b}{\b}(\tb_s-\P_2)+b\left(\lsl+b-\P_1\right)-\frac{\a}{2\b}b^2\left(\frac{r_s}{\l}+2\b\right).
\ee
The modulation equations are a consequence of the orthogonality conditions \fref{ortho} and require the derivation of the equation for $\et$. Recall the equation \eqref{th1:eqrenormlaizedw} satisfied by $w$
\bee
&&i\partial_s w+w_{yy}-w+w|w|^{p-1}  +\frac{\a b}{2\b}\frac{N-1}{1+\frac{\a by}{2\b}}(w_y-i\b w)+b (i\Lambda w+\b yw)\\
\nn&=&-\tb_s yw+\left(\lsl+b\right) (i\Lambda w+\b yw)+\left(\frac{r_s}{\l}+2\b\right)(iw_y+\b w)+(\tgamma_s-\b^2) w.
\eee
We inject the decomposition \fref{decompw} which we rewrite using \fref{cneoneon}: $$w=\zeta_b\pbb e^{-ib\frac{y^2}{4}}+\et.$$We then define $$\textrm{Mod}(t)=\left|\rsl+2\beta\right|+|\tgamma_s-\beta^2|+\left|\lsl+b-\P_1\right|+\left|\tb_s-\P_2\right|,$$
and obtain using \fref{lawb}, the formula \fref{defpsiacaculaer} and the fact that $\pbb=Q+O(be^{-c|y|})$ the following system of equations for $\te_1, \te_2$:
\bea
\nonumber\p_s\te_1-L_-(\te_2)&=&-\frac{\a b}{2\b}\frac{(N-1)}{1+\frac{\a b}{2\b}y}((\te_2)_y-\beta\te_1)-\tb_sy\te_2+\left(\lsl+b-\P_1\right)\Lambda Q\\
\nn&&+\lsl(\Lambda\te_1+\b y\te_2)+\left(\frac{r_s}{\l}+2\b\right)(Q_y+(\te_1)_y)+\Gamma \te_2\\
\label{eq:e1}&&-\Im R(\te)+O\left[\left(b|\te|+b^k+b\rm Mod \right)e^{-c|y|}\right],
\eea
and
\bea
\nonumber\p_s\te_2+L_+(\te_1)&=&-\frac{\a b}{2\b}\frac{(N-1)}{1+\frac{\a b}{2\b}y}(-(\te_1)_y-\b \te_2)+(\tb_s-\P_2)yQ+\tb_sy\te_1\\
&&\nn-\b\left(\lsl+b-\P_1\right)yQ+\lsl (\Lambda\te_2-\b y\te_1)+\left(\frac{r_s}{\l}+2\b\right)(\te_2)_y\\
\label{eq:e2}&&-\Gamma(Q+\te_1)+\Re R(\te)+O\left[\left(b|\te|+b^k+b\rm Mod\right)e^{-c|y|}\right],
\eea
where:
\be\label{defGamma}
\Gamma=(\tgamma_s-\b^2)+\b\left(\frac{r_s}{\l}+2\b\right),
\ee
$L_+$ and $L_-$ are the matrix linearized operator close to $Q$:
\be\label{defL}
L_+=-\p_y^2+1-pQ^{p-1},\,L_-=-\p_y^2+1-Q^{p-1}.
\ee
and the nonlinear term is given by
$$R(\et)=f(\qbb e^{i\beta y}+\et)-f(\qbb e^{i\beta y})-f'(\qbb e^{i\beta y})\cdot\et$$ with 
\be
\label{deff}
f(u)=u|u|^{p-1}.
\ee
We are now in position to derive the modulation equations:

\begin{Lemma}[Modulation equations]
\label{modulationequations}
There holds the bounds:
\be\label{loirgamma}
{\rm Mod} \lesssim b\|\te\|_{H^1_\mu}+b^k,
\ee
\be\label{loib}
\left|b_s+(1-\a)b^2-\frac{b}{\b}\P_2-b\P_1\right|\lesssim b^2\|\te\|_{H^1_\mu}+b^{k+1}.
\ee
\end{Lemma}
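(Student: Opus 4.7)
The plan is to extract the four components of ${\rm Mod}$ one at a time by testing the equations \eqref{eq:e1}--\eqref{eq:e2} against the four localized directions appearing in the orthogonality conditions \eqref{ortho}. Specifically, I will pair \eqref{eq:e1} with $\zeta_b Q$ and $\zeta_b yQ$, and \eqref{eq:e2} with $\zeta_b \Lambda Q$ and $\zeta_b \pa_y Q$. In each pairing, three simplifications reduce the identity to a scalar relation on ${\rm Mod}$: (i) the time-derivative term $(\pa_s\te_j,\zeta_b\,\cdot)$ is rewritten via the orthogonality conditions as $-(\te_j,\pa_s\zeta_b\,\cdot)$, which is supported where $|y|\sim 1/\sqrt b$ and is therefore exponentially small in $b$ thanks to the decay of $Q$ and $Q'$; (ii) the $L_\pm\te_j$ term is transferred by self-adjointness onto the test function, and the kernel relations \eqref{structurekernel} ($L_-Q=L_+Q'=0$, $L_+\Lambda Q=-2Q$, $L_-(yQ)=-2Q'$) combined with \eqref{ortho} annihilate it up to exponentially small cutoff errors; (iii) the nonlinear remainder $R(\te)$ is quadratic and contributes $O(\|\te\|_{H^1_\mu}^2)$, while the $\Psi$-type errors contribute $O(b^k)$ via \eqref{error} and the modulation-generated errors contribute $O(b\,{\rm Mod})$.

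What survives is a nearly diagonal linear system in $(\lsl+b-\P_1,\,r_s/\l+2\b,\,\Gamma,\,\tb_s-\P_2)$, where $\Gamma$ is defined by \eqref{defGamma}. The pairing with $\zeta_b Q$ isolates $\lsl+b-\P_1$ through the nondegenerate coefficient $(\Lambda Q,Q)=\tfrac{5-p}{2(p-1)}$, which is precisely the mass-subcritical nondegeneracy exploited in Lemma \ref{lemma:generalstrat}; the pairing with $\zeta_b yQ$ isolates $r_s/\l+2\b$ through $(\pa_yQ,yQ)=-\tfrac12\|Q\|_{L^2}^2$, the would-be coupling with the previous unknown vanishing because $(\Lambda Q,yQ)=0$ by parity; the pairing with $\zeta_b\Lambda Q$ isolates $\Gamma$ through $-(Q,\Lambda Q)$, again using $(yQ,\Lambda Q)=0$; the pairing with $\zeta_b\pa_yQ$ couples $\tb_s-\P_2$ to $\lsl+b-\P_1$ through $(yQ,\pa_yQ)$, but the already-controlled value of $\lsl+b-\P_1$ lets us extract $\tb_s-\P_2$ by reinjection, and $\tgamma_s-\b^2$ is finally recovered from $\Gamma$ via \eqref{defGamma}. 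Inverting this system produces \eqref{loirgamma}. The bound \eqref{loib} then follows directly from the algebraic identity \eqref{lawb}, whose right-hand side is a $b$-weighted combination of the components of ${\rm Mod}$.

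I expect the main accounting difficulty to be handling the small cross terms $\lsl(\Lambda\te_1+\b y\te_2)$, $\tb_s y\te_j$, $\Gamma\te_j$, and the $b$-order deformations $\pbb-Qe^{-i\b y}$. Each of these contributes quantities of size $b\cdot{\rm Mod}$ or ${\rm Mod}\cdot\|\te\|_{H^1_\mu}$ to the scalar products, which must be absorbable into the right-hand side without spoiling the diagonal dominance of the system; this is ensured by the bootstrap assumptions \eqref{boot1}--\eqref{boot2}, which guarantee $b\ll 1$ and $\|\te\|_{H^1_\mu}\ll 1$, so the system remains a small perturbation of its diagonal part.
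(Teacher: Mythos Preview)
Your proposal is correct and follows essentially the same approach as the paper: test \eqref{eq:e1}--\eqref{eq:e2} against the four localized directions $\zeta_b Q$, $\zeta_b yQ$, $\zeta_b\Lambda Q$, $\zeta_b\pa_yQ$, exploit the kernel identities \eqref{structurekernel} and the orthogonality conditions \eqref{ortho} to kill the $L_\pm\te_j$ and $\pa_s\te_j$ contributions, estimate the nonlinear remainder via \eqref{cncneone} and the Sobolev bound \eqref{sobolevlinfty}, and then invert the resulting $4\times 4$ system using the bootstrap smallness. You are in fact slightly more explicit than the paper about the triangular structure, correctly noting the $O(1)$ coupling between $\tb_s-\P_2$ and $\lsl+b-\P_1$ in the $\zeta_b\pa_yQ$ pairing (the paper simply lumps all cross terms into the bracket ${\rm Mod}\,(b+\|\et\|)$), but the endpoint is identical.
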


\begin{proof}[Proof of Lemma \ref{modulationequations}]
We multiply the equation of $\te_1$ \eqref{eq:e1} by $\zeta_byQ$ and integrate. Using the orthogonality conditions  \eqref{ortho}, the identity $L_-(yQ)=-2Q'$ and the non degeneracy 
\be\label{tagadapouet}
(\pa_y Q,\zeta_byQ)=-\frac{1}{2}\|Q\|^2_{L^2}+O(e^{-\frac{c}{\sqrt{b}}}),
\ee 
we obtain:
\be
\label{vneoneoneo}
\left|\rsl+2\beta\right|\lesssim b\|\et\|_{L^2_{\mu}}+\textrm{Mod}(b+\|\et\|_{L^2_{\mu}})+b^k+\int |y|^C|R(\et)|\zeta_be^{-|y|}.
\ee
Next, we multiply the equation of $\te_2$ \eqref{eq:e2} by $\zeta_b\Lambda Q$ and use the orthogonality conditions  \eqref{ortho}, the identity $L_+(\Lambda Q)=-2Q$ and the non degeneracy 
\be\label{compqq1}
(\zeta_b\Lambda Q,Q)=\frac{5-p}{2(p-1)}\left(\int Q^2\right)+O(e^{-\frac{c}{\sqrt{b}}})\neq 0
\ee
to compute:
\be
\label{vneoneoneobis}|\Gamma|\lesssim  b\|\et\|_{L^2_{\mu}}+\textrm{Mod}(b+\|\et\|_{L^2_{\mu}})+b^k+\int |y|^C|R(\et)|\zeta_be^{-|y|}.
\ee
Next, we multiply the equation of $\te_1$ \eqref{eq:e1} by $\zeta_bQ$ and integrate. Using the orthogonality condition \eqref{ortho}, the identity $L_-(Q)=0$ and the non degeneracy \eqref{compqq1}, we obtain:
\be\label{tagada}
\left|\lsl+b-\P_1\right|\lesssim b\|\et\|_{L^2_{\mu}}+\textrm{Mod}(b+\|\et\|_{L^2_{\mu}})+b^k+\int |y|^C|R(\et)|\zeta_be^{-|y|}.
\ee
Finally, we multiply the equation of $\te_2$ \eqref{eq:e2} by $\zeta_bQ'$ and use the orthogonality condition \eqref{ortho}, the identity $L_+(Q')=0$ and the non degeneracy \eqref{tagadapouet}, we obtain
\be\label{tagada1}
\left|\tb_s-\P_2\right|\lesssim b\|\et\|_{L^2_{\mu}}+\textrm{Mod}(b+\|\et\|_{L^2_{\mu}})+b^k+\int |y|^C|R(\et)|\zeta_be^{-|y|}.
\ee
In order to estimate the nonlinear term, we first use the one dimensional Sobolev\footnote{Recall that $\mu=(1+\frac{\a b}{2\b}y)^{N-1}$ and thus, $y>-\frac{\delta}{b}$ implies $\mu\gtrsim 1$.}
\be
\label{sobolevlinfty}
\|\e\|_{L^\infty(y\geq -\frac{\delta}{b})}\leq \|\e'\|^{\frac{1}{2}}_{L^2(y\geq -\frac{\delta}{b})}\|\e\|^{\frac{1}{2}}_{L^2(y\geq -\frac{\delta}{b})}\lesssim \|\e\|_{H^1_\mu}.
\ee
We then estimate by direct inspection\footnote{let us recall that $p>1$ but $p<2$ is allowed in our range of parameters.}: 
\be
\label{cneocneoneo}\forall z\in \mathbb{C}, \ \ |f(1+z)-f(1)-f'(1)z|\lesssim |z|^2+|z|^p{\bf 1}_{p>2}
\ee and hence by homogeneity:
\be
\label{cncneone}
|R(\et)|\lesssim |\qbb|^{p-2}|\e|^2+|\e|^p{\bf 1}_{p>2}.
\ee We therefore conclude from the decay \fref{deacypb}:
\bee
\int |y|^C|R(\et)|\zeta_be^{-|y|} & \lesssim & \int |y|^{C_k}\zeta_b^{p-1}e^{-(p-1)|y|}|\e|^2+{\bf 1}_{p>2}\int |\e|^p\zeta_b\\
& \lesssim & \|\e\|_{L^2_{\mu}}^2+{\bf 1}_{p>2}\|\e\|_{L^{\infty}}^{p-2}\|\e\|^2_{L^2_\mu}\lesssim  \|\e\|_{L^2_{\mu}}^2
\eee
where we used the Sobolev bound \fref{sobolevlinfty} and the bootstrap bound \fref{boot1} in the last step. Injecting this estimate into \fref{vneoneoneo}, \fref{vneoneoneobis}, \eqref{tagada} and \eqref{tagada1} yields \fref{loirgamma}.  
\fref{loib} now follows from \fref{loirgamma} and \fref{lawb}.
\end{proof}


\section{Monotonicity formula}
\label{secitonmarowetw}

We now turn to the core of our analysis which is the derivation of a monotonicity formula for the norm of $\e$ which relies on a mixed Energy/Morawetz functional in the continuation of \cite{RaphRod}, \cite{RSmin}. As in \cite{RSmin}, the required repulsivity properties for the linearized operator are {\it thanks to the minimal mass assumption} energy bounds only which are well known for the mass subcritical ground state. The addiitional Morawetz term is designed to produce the expected non trivial Galilean drift on the soliton core after renormalization.

\subsection{Algebraic identity}

We recall the decomposition \fref{defe} which in view of \eqref{eqwgobale} yields the equation for $\tu$:
\be\label{un3}
i\pa_t\tu+\Delta\tu+|u|^{p-1}u-\tq|\tq|^{p-1}=-\psi=-\frac{1}{\lambda(t)^{\frac{2p}{p-1}}}\Psi\left(t,\frac{r-r(t)}{\lambda(t)}\right)e^{i\gamma(t)}
\ee
with $\Psi$ given by \eqref{defpsiacaculaer}. We let 
$$\phi:[-1,+\infty)\goto \RR$$  be a time independent smooth compactly supported cut off function which satisfies: 
\be
\label{propphione}
\phi(z)\equiv 0\ \ \mbox{for}\ \ -1\leq z\leq -\frac{1}{2}\textrm{ and for }z\geq \frac{1}{2},
\ee
and
\be
\label{assumpotionohione}
\phi(0)=1,\,\,\sup_{z\geq -1}|\phi(z)|< \frac{\sqrt{1+\b_\infty^2}}{\b_\infty}.
\ee
Let $$F(u)=\frac{1}{p+1}|u|^{p+1}, \ \ f(u)=u|u|^{p-1} \ \ \mbox{so that} \ \ F'(u)\cdot h=Re(f(u)\overline{h}).$$ 
We first claim a purely algebraic identity for the linearized flow \fref{un3} which is a mixed Energy/Morawetz functional:

\begin{Lemma}[Algebraic energy/Morawetz estimate]
\label{lemma:timederivative} 
Let
\bea
\label{defI}
\nonumber \mathcal I(\ut) & = & \frac{1}{2}\int |\n\tu|^2 +\frac{1+\b^2}{2}\int \frac{|\tu|^2}{\l^2}-\int \left[F(\tq+\tu)-F(\tq)-F'(\tq)\cdot\tilde{u}\right]\\
 & + &  \frac{\b}{\l}\Im\left(\int \phi\left(\frac{r}{r(t)}-1\right)\partial_r\tu\overline{\tu}\right),
\eea
\bea
\label{kutilde}
 \mathcal J(\ut) & = & -\frac{1+\b^2}{\lambda^2}\Im\left(f(u)-f(\tq),\overline{\ut}\right)\\
\nn &-& \frac{2\b}{\l}\Re\left(\int\phi\left(\frac{r}{r(t)}-1\right)(f(\tq+\tu)-f(\tq))\overline{\pa_r\tu}\right) \\
\nonumber&-& \Re\left(\partial_t\tq,\overline{(f(\tu+\tq)-f(\tq)-f'(\tq)\cdot\tu)}\right),
\eea
then there holds:
\be\label{crc6}
\frac{d}{dt}\mathcal I(\ut) = \mathcal J(\ut)+O\Bigg(\frac{b}{\lambda^4}\|\e\|_{H^1_\mu}^2+\frac{b^k}{\l^4}\|\e\|_{H^1_\mu}\Bigg).
\ee
\end{Lemma}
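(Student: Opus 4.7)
The identity is purely algebraic: my plan is to differentiate each of the four pieces of $\mathcal I$ in time, substitute the equation $i\pa_t\tu = -\Delta \tu - (f(u)-f(\tq)) - \psi$ from \eqref{un3}, and identify which terms assemble to $\mathcal J$ versus which are absorbed into the error.

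\textbf{Step 1: The energy piece.} For the first three terms of $\mathcal I$, standard manipulation gives
\[
\frac{d}{dt}\!\left[\tfrac12\|\nabla\tu\|_{L^2}^2+\tfrac{1+\b^2}{2\l^2}\|\tu\|_{L^2}^2-\!\int\!(F(u)-F(\tq)-F'(\tq)\cdot\tu)\right]=\Re(\pa_t\tu, -\Delta\tu+\tfrac{1+\b^2}{\l^2}\tu-(f(u)-f(\tq)))+R_1,
\]
where $R_1=(\tfrac{1+\b^2}{2\l^2})'\|\tu\|_{L^2}^2-\Re\!\int(f(u)-f(\tq)-f'(\tq)\cdot\tu)\overline{\pa_t\tq}$, using that $F''(\tq)(h,k)$ is symmetric so the two cross terms coming from $\frac{d}{dt}\int F'(\tq)\!\cdot\!\tu$ cancel. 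Using \eqref{un3} the parenthesis on the right equals $i\pa_t\tu+\tfrac{1+\b^2}{\l^2}\tu+\psi$; since $\Re(\pa_t\tu,i\pa_t\tu)=0$, this leaves $\tfrac{1+\b^2}{\l^2}\Re(\pa_t\tu,\tu)+\Re(\pa_t\tu,\psi)+R_1$. Re-using the equation and $\Im\int(\Delta\tu)\bar{\tu}=0$ yields $\Re(\pa_t\tu,\tu)=-\Im(f(u)-f(\tq),\tu)-\Im(\psi,\tu)$, producing the first and third terms of $\mathcal J$, plus $\Re(\pa_t\tu,\psi)$ and $\tfrac{1+\b^2}{\l^2}\Im(\psi,\tu)$ as $\psi$-errors, and $R_1$'s coefficient derivative as a modulation error.

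\textbf{Step 2: The Morawetz piece.} Let $\Phi(t,r)=\phi(r/r(t)-1)$. The time derivative of $\tfrac{\b}{\l}\Im\!\int\Phi\,\pa_r\tu\,\overline{\tu}$ splits into: (i) a coefficient-derivative contribution $\big(\tfrac{\b}{\l}\big)'\Im\!\int\Phi\pa_r\tu\bar{\tu}$, of size $\tfrac{b}{\l^2}$ by Lemma \ref{modulationequations}; (ii) a $\pa_t\Phi$ contribution involving $r'(t)/r(t)^2$ and $\phi'$; and (iii) the main part $\tfrac{\b}{\l}\Im\!\int\Phi(\pa_r\pa_t\tu\,\overline{\tu}+\pa_r\tu\,\overline{\pa_t\tu})$. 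Integrating by parts in $r$ on the first summand of (iii) gives $2\tfrac{\b}{\l}\Im\!\int\Phi\pa_r\tu\,\overline{\pa_t\tu}$ modulo a $\pa_r\Phi$ term. Substituting $\pa_t\tu$ from the equation, the nonlinear piece produces precisely $-\tfrac{2\b}{\l}\Re\!\int\Phi(f(u)-f(\tq))\overline{\pa_r\tu}$, which is the second term of $\mathcal J$. The Laplacian piece gives, via the classical N-dimensional radial Morawetz identity $-2\Re\!\int\Phi\pa_r\tu\,\overline{\Delta\tu}\,dx=\int(\pa_r\Phi)|\pa_r\tu|^2dx-(N-1)\!\int\Phi\tfrac{|\pa_r\tu|^2}{r}dx$, a virial remainder supported on $\{r\sim r(t)\}$ where $\pa_r\Phi\sim 1/r\sim \l^{-\a}$. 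The $\psi$ piece contributes $\tfrac{\b}{\l}\Re\!\int\Phi\pa_r\tu\,\bar\psi$.

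\textbf{Step 3: Error estimates.} Changing variables $r=r(t)+\l y$, the bookkeeping $\int|\tu|^2dx\sim r^{N-1}\l^{1-4/(p-1)}\|\e\|_{L^2_\mu}^2=\|\e\|_{L^2_\mu}^2$ (the exponents cancel because $r\sim\l^\a$ with $\a(N-1)=(5-p)/(p-1)$), and similarly $\|\nabla\tu\|_{L^2}^2\sim\l^{-2}\|\pa_y\e\|_{L^2_\mu}^2$, so each "time derivative" inserts an extra $\l^{-2}$ factor. The modulation bounds of Lemma \ref{modulationequations} give $-\l_t/\l,\b_t=O(b/\l^2)$, so $R_1$ and the Morawetz coefficient error are of size $\tfrac{b}{\l^4}\|\e\|_{H^1_\mu}^2$. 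The virial remainder with $\pa_r\Phi\sim\l^{-\a}$ and the factor $\tfrac{\b}{\l}$ gives $\tfrac{1}{\l^{1+\a}}\|\pa_r\tu\|_{L^2}^2\sim\tfrac{b}{\l^4}\|\pa_y\e\|_{L^2_\mu}^2$ by \eqref{vnieveoev}. The $\psi$-terms are estimated by Cauchy--Schwarz together with $\|\Psi\|\lesssim b^k$ from \eqref{error} (transferred back by scaling to $\|\psi\|\sim b^k/\l^{2+2/(p-1)}$), producing the $\tfrac{b^k}{\l^4}\|\e\|_{H^1_\mu}$ term (the $\pa_t\tu$ appearing in $\Re(\pa_t\tu,\psi)$ is reinjected into $-i\Delta\tu-i(f(u)-f(\tq))$ so only $H^1_\mu$ norms of $\e$ appear, not time derivatives). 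The time-derivative-of-$\tq$ term $-\Re(\pa_t\tq,\overline{f(u)-f(\tq)-f'(\tq)\cdot\tu})$ is identified as the third piece of $\mathcal J$, completing the identity.

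The main obstacle will be the careful accounting of the Morawetz virial remainder: one must verify that every boundary-type term produced by integrating by parts against the sharply localized cutoff $\phi'$ fits into the universal $\tfrac{b}{\l^4}\|\e\|_{H^1_\mu}^2$ bound, which relies crucially on the separation of scales $r(t)\sim\l^\a\gg\l$ intrinsic to the ring regime. A subsidiary difficulty is the treatment of the $\pa_t\tq$-weighted nonlinear remainder, which must be left in the precise form appearing in $\mathcal J$ rather than expanded, so that the cancellation against $\int F'(\tq)\cdot\tu$ in Step~1 is exact.
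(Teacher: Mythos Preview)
Your approach is correct and mirrors the paper's proof: differentiate the energy block to extract the first and third pieces of $\mathcal J$ plus coefficient and $\psi$ errors, then differentiate the Morawetz block and perform the Pohozaev integration by parts to extract the second piece of $\mathcal J$ plus a virial remainder of size $b\l^{-4}\|\e\|_{H^1_\mu}^2$, closing all leftover terms with Lemma~\ref{modulationequations}. One small correction: your claim ``$\|\Psi\|\lesssim b^k$ from \eqref{error}'' conflates the construction error $\Psi_{b,\tb}$ with the full $\Psi$ of \eqref{defpsiacaculaer}, which also carries the modulation defects; the correct pointwise bound (after Lemma~\ref{modulationequations}) is $|\Psi|\lesssim (b^k+b\|\e\|_{H^1_\mu})(1+|y|^{c_k})e^{-|y|}$, which still lands in the stated error.
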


\begin{proof}[Proof of Lemma \ref{lemma:timederivative}]

{\bf step 1} Algebraic derivation of the energetic part. We compute from \fref{un3}:
\bea
\label{cnkoheofhaussie}
& & \frac{d}{dt}\bigg\{\frac{1}{2}\int |\n\tu|^2 +\frac{1+\b^2}{2}\int\frac{|\tu|^2}{\l^2}-\int \left[(F(u)-F(\tq)-F'(\tq)\cdot\tu)\right]\bigg\}\\
\nonumber & = &- \Re\left(\partial_t\tu,\overline{\Delta \tu-\frac{1+\b^2}{\lambda^2}\tu+(f(u)-f(\tq))}\right)-\frac{(1+\b^2)\l_t}{\l^3}\int |\tu|^2\\
\nonumber & + & \frac{\b\b_t}{\l^2}\int |\tu|^2-\Re\left(\partial_t\tq,(\overline{f(\tu+\tq)-f(\tq)-f'(\tq)\cdot\tu)}\right)\\
\nonumber & = &  \Im\left(\psi,\overline{\Delta \tu-\frac{1+\b^2}{\lambda^2}\tu+(f(u)-f(\tq))}\right)-\frac{1+\b^2}{\lambda^2}\Im\left(f(u)-f(\tq),\overline{\ut}\right)\\
\nonumber&-& \frac{(1+\b^2)\l_t}{\l^3}\int |\tu|^2+ \frac{\b\b_t}{\l^2}\int |\tu|^2-\Re\left(\partial_t\tq,\overline{(f(\tu+\tq)-f(\tq)-f'(\tq)\cdot\tu)}\right).
\eea
We first estimate from \fref{loirgamma}:
\bea
\label{neiohoeghe}
-\frac{\lambda_t}{\lambda^3}\int|\tu|^2&=&\frac{b}{\lambda^4}\int|\ut|^2-\frac{\P_1}{\lambda^4}\int|\ut|^2-\frac{1}{\lambda^4}\left(\lsl+b\right)\|\ut\|_{L^2}^2\\
\nn &=&\frac{1}{\lambda^4}O\left(b\|\e\|_{L^2_\mu}^2\right)
\eea
where we used the bootstrap assumptions \eqref{boot1} \eqref{boot3} \eqref{boot4} in the last equality. Also, using again \fref{loirgamma}, we have
\bea\label{okdokey}
\frac{\b\b_t}{\l^2}\int |\tu|^2&=& \frac{\b\P_2}{\l^4}\int |\tu|^2+\frac{\b(\b_s-\P_2)}{\l^4}\int |\tu|^2\\
\nn&=& \frac{1}{\lambda^4}O\left(b\|\e\|_{L^2_\mu}^2\right)
\eea
where we used the bootstrap assumptions  \eqref{boot1} \eqref{boot2} \eqref{boot3} \eqref{boot4} in the last equality.

It remains to estimate the first term in the RHS \fref{cnkoheofhaussie}. We have
\bee
&&\Bigg|\Im\left(\psi,\overline{\Delta \tu-\frac{1+\b^2}{\lambda^2}\tu+(f(u)-f(\tq))}\right)-\frac{1+\b^2}{\lambda^2}\Im\left(f(u)-f(\tq),\overline{\ut}\right)\Bigg|\\
&\lesssim & \Bigg|\Im\Bigg(\int\left[\Delta\psi-(1+\b^2)\frac{\psi}{\l^2}+\frac{p+1}{2}|\tq|^{p-1}\psi-\frac{p-1}{2}|\tq|^{p-3}\tq^2\overline{\psi}\right]\overline{\tu}\Bigg)\Bigg|\\
&+& \left|\Im\left(\psi,\overline{(f(\tq+\ut)-f(\tq)-f'(\tq)\cdot\ut)}\right)\right|.
\eee
We extract from \fref{defpsiacaculaer}, \eqref{loirgamma} and \eqref{loib} the bound:
\bea\label{roughboundpsi}
\nonumber |\Psi|&\lesssim&  \zeta_b(b^k+\textrm{Mod})(1+|y|^{c_k})e^{-|y|}+\frac{e^{-|y|}}{b^{c_k}}{\bf 1}_{y\sim \frac{1}{\sqrt{b}}}\\
&\lesssim&  \zeta_b(b^k+b\|\e\|_{H^1_\mu})(1+|y|^{c_k})e^{-|y|}+\frac{e^{-|y|}}{b^{c_k}}{\bf 1}_{y\sim \frac{1}{\sqrt{b}}}
\eea
Then, we estimate in brute force:
\bee
&&\Bigg|\Im\Bigg(\int\left[\Delta\psi-(1+\b^2)\frac{\psi}{\l^2}+\frac{p+1}{2}|\tq|^{p-1}\psi-\frac{p-1}{2}|\tq|^{p-3}\tq^2\overline{\psi}\right]\overline{\tu}\Bigg)\Bigg|\\
& \lesssim & \frac{(b^k+b\|\e\|_{H^1_\mu})\|\e\|_{H^1_{\mu}}}{\l^4}.
\eee
Also, we estimate using the homogeneity estimate \fref{cncneone}:
\bee
&&\left|\Im\left(\psi,\overline{(f(\tq+\ut)-f(\tq)-f'(\tq)\cdot\ut)}\right)\right|\\
& \lesssim &\frac{1}{\l^4}\int\left[ \zeta_b(b^k+b\|\e\|_{H^1_\mu})(1+|y|^{c_k})e^{-|y|}+\frac{e^{-|y|}}{b^{c_k}}{\bf 1}_{y\sim \frac{1}{\sqrt{b}}}\right]\left[|\qbb|^{p-2}|\e|^2+|\e|^p{\bf 1}_{p>2}\right]\\
& \lesssim & \frac{b}{\l^4}\|\e\|_{H^1_{\mu}}^2
\eee
where we used the Sobolev bound \fref{sobolevlinfty} in the last step. We have therefore obtained the preliminary computation:
\bea
\label{cnkoheofh}
\nonumber & & \frac{d}{dt}\bigg\{\frac{1}{2}\int |\n\tu|^2 +\frac{1+\b^2}{2}\int\frac{|\tu|^2}{\l^2}-\int \left[(F(u)-F(\tq)-F'(\tq)\cdot\tu)\right]\bigg\}\\
\nonumber& = &  - \frac{1+\b^2}{\lambda^2}\Im\left(f(u)-f(\tq),\overline{\ut}\right)-\Re\left(\partial_t\tq,\overline{(f(\tu+\tq)-f(\tq)-f'(\tq)\cdot\tu)}\right)\\
 & + & \frac{1}{\lambda^4}O\Bigg(b^k\|\e\|_{H^1_\mu}+b\|\e\|_{H^1_\mu}^2\Bigg).
\eea

{\bf step 2} Algebraic derivation of the localized virial part.  We now estimate the contribution of the localized Morawetz term. We first compute using \fref{frezzing}:
\bee
\frac{d}{dt}\left[\frac{r}{r(t)}\right]&=&-\frac{r_t(t)r}{r^2(t)}=-\rsl\frac{r}{\l r^2(t)}\\
&=&\frac{2\b r}{\l(t)r(t)^2}-\left(\frac{r_s}{\l}+2\b\right)\frac{r}{\l(t)r^2(t)}\\
&=&\frac{\a b(t)}{\l^2(t)}\frac{r}{r(t)}-\frac{\a b}{2\b \l(t)^2}\frac{r}{r(t)}\left(\frac{r_s}{\l}+2\b\right).
\eee
This yields:
\bea
\label{firsttermvirieloc}
 & & \frac{d}{dt}\left\{\frac{\b}{\l}\Im\left(\int\phi\left(\frac{r}{r(t)}-1\right)\partial_r\tu\overline{\tu}\right)\right\} \\
 \nonumber & =&    \frac{\a\b b}{\l^3}\Im\left(\int \frac{r}{r(t)}\phi'\left(\frac{r}{r(t)}-1\right)\partial_r\tu\overline{\tu}\right)\\
\nonumber &-& \frac{\a b}{2\l^3}\left(\frac{r_s}{\l}+2\b\right)\Im\left(\int \frac{r}{r(t)}\phi'\left(\frac{r}{r(t)}-1\right)\partial_r\tu\overline{\tu}\right)+\frac{\P_2}{\l^3}\Im\left(\int\phi\left(\frac{r}{r(t)}-1\right)\partial_r\tu\overline{\tu}\right)\\
\nn&+&\frac{\b_s-\P_2}{\l^3}\Im\left(\int\phi\left(\frac{r}{r(t)}-1\right)\partial_r\tu\overline{\tu}\right)+ \frac{\b (b-\P_1)}{\l^3}\Im\left(\int \phi\left(\frac{r}{r(t)}-1\right)\partial_r\tu\overline{\tu}\right)\\
\nn&-&\frac{b}{\l^3}\left(\lsl+b-\P_1\right)\Im\left(\int \phi\left(\frac{r}{r(t)}-1\right)\partial_r\tu\overline{\tu}\right)\\
\nonumber&+&   \frac{\b}{\l} \Re\left(\int i\partial_t\ut\left[\overline{\left(\frac{1}{r(t)}\phi'+\frac{N-1}{r}\phi\right)\left(\frac{r}{r(t)}-1\right)\tu+2\phi\left(\frac{r}{r(t)}-1\right)\pa_r\tilde{u}}\right]\right)\\
\nonumber&=&    \frac{\b}{\l} \Re\left(\int i\partial_t\ut\left[\overline{\left(\frac{1}{r(t)}\phi'+\frac{N-1}{r}\phi\right)\left(\frac{r}{r(t)}-1\right)\tu+2\phi\left(\frac{r}{r(t)}-1\right)\pa_r\tilde{u}}\right]\right)\\
\nonumber &+& O\left(\frac{b}{\lambda^4}\|\e\|_{H^1_\mu}^2\right),
\eea
where we used  in the last inequality \eqref{loirgamma}, the bootstrap assumptions \eqref{boot1} \eqref{boot2} \eqref{boot3} \eqref{boot4}, and the fact that 
\be
\label{estsuporphi}
\frac{1}{r}\sim \frac{1}{r(t)}\textrm{ on the support of }\phi\left(\frac{\cdot}{r(t)}-1\right).
\ee

The first term in the right-hand side of \fref{firsttermvirieloc} corresponds to the localized Morawetz multiplier, and we get from \fref{un3} and the classical Pohozaev integration by parts formula:
\bee
\nonumber& &  \frac{\b}{\l} \Re\left(\int i\partial_t\ut\left[\overline{\left(\frac{1}{r(t)}\phi'+\frac{N-1}{r}\phi\right)\left(\frac{r}{r(t)}-1\right)\tu+2\phi\left(\frac{r}{r(t)}-1\right)\pa_r\tilde{u}}\right]\right)\\
& = &   \frac{\a b}{\l^2}\left(\int\phi'\left(\frac{r}{r(t)}-1\right)|\pa_r\tu|^2\right)\\
\nonumber &-& \frac{\a^2 b^2}{8\b\l^3}\left(\int\Delta\left(\frac{1}{r(t)}\phi'+\frac{N-1}{r}\phi\right)\left(\frac{r}{r(t)}-1\right)|\tu|^2\right)\\
\nonumber &  - & \frac{2\b}{\l}\Re\left(\int\phi\left(\frac{r}{r(t)}-1\right)(f(\tq+\tu)-f(\tq))\overline{\pa_r\tu}\right) \\
\nonumber &-& \frac{\b}{\l}\Re\left(\int\left(\frac{1}{r(t)}\phi'+\frac{N-1}{r}\phi\right)\left(\frac{r}{r(t)}-1\right)(f(\tq+\tu)-f(\tq))\overline{\tu}\right)\\
\nonumber &  - & \frac{2\b}{\l}\Re\left(\int\phi\left(\frac{r}{r(t)}-1\right)\psi\overline{\pa_r\tu}\right)- \frac{\b}{\l}\Re\left(\int\left(\frac{1}{r(t)}\phi'+\frac{N-1}{r}\phi\right)\left(\frac{r}{r(t)}-1\right)\psi\overline{\tu}\right).
\eee
which together with \eqref{estsuporphi} yields
\bea
\nonumber& &  \frac{\b}{\l} \Re\left(\int i\partial_t\ut\left[\overline{\left(\frac{1}{r(t)}\phi'+\frac{N-1}{r}\phi\right)\left(\frac{r}{r(t)}-1\right)\tu+2\phi\left(\frac{r}{r(t)}-1\right)\pa_r\tilde{u}}\right]\right)\\
\label{nceohoeoud} & = & -  \frac{2\b}{\l}\Re\left(\int\phi\left(\frac{r}{r(t)}-1\right)(f(\tq+\tu)-f(\tq))\overline{\pa_r\tu}\right) \\
\nonumber &-& \frac{\b}{\l}\Re\left(\int\left(\frac{1}{r(t)}\phi'+\frac{N-1}{r}\phi\right)\left(\frac{r}{r(t)}-1\right)(f(\tq+\tu)-f(\tq)-f'(\tq)\cdot\tu)\overline{\tu}\right)\\
\nonumber &  - & \frac{2\b}{\l}\Re\left(\int\phi\left(\frac{r}{r(t)}-1\right)\psi\overline{\pa_r\tu}\right)- \frac{\b}{\l}\Re\left(\int\left(\frac{1}{r(t)}\phi'+\frac{N-1}{r}\phi\right)\left(\frac{r}{r(t)}-1\right)\psi\overline{\tu}\right)\\
\nn&+&O\left(\frac{b}{\lambda^4}\|\e\|_{H^1_\mu}^2\right).
\eea
We estimate by direct inspection:
$$|f(1+z)-f(1)-f'(1)\cdot z|\lesssim |z|^{p}+|z|^2{\bf 1}_{p>2}$$ and hence the bound by homogeneity:
\be
\label{hvoeeoheoh}
|f(\tq+\tu)-f(\tq)-f'(\tq)\cdot\tu|\lesssim |\ut|^p+|\tq|^{p-2}|\ut|^2{\bf 1}_{p>2}.
\ee
We thus obtain the bound:
\bea
\label{neoneiooe}
\nonumber & &  \Bigg|  -\frac{\b}{\l}\Re\left(\int\left(\frac{1}{r(t)}\phi'+\frac{N-1}{r}\phi\right)\left(\frac{r}{r(t)}-1\right)(f(\tq+\tu)-f(\tq)-f'(\tq)\cdot\tu)\overline{\tu}\right)\Bigg|\\ & \lesssim &\frac{b}{\l^2}\left[\int|\ut|^{p+1}+|\ut|^3|\tq|^{p-2}{\bf 1}_{p>2}\right]
\eea
where we used \fref{estsuporphi}.
We claim the nonlinear bounds:
\be\label{estnl2}
\int |\tu|^3|\tq|^{p-2}\lesssim\frac{\delta\|\e\|^2_{L^2_\mu}}{\l^2}\ \ \mbox{for}\ \ p>2,
\ee
\be\label{estnl4}
\int |\tu|^{p+1}\lesssim\frac{\delta^{p-1}\|\e\|^2_{H^1_\mu}}{\l^2},
\ee
which are proved below. The terms involving $\psi$ in \fref{nceohoeoud} are estimateed in brute force using \eqref{roughboundpsi}
\bea
\label{lpusfpfwu}
& &\left|\frac{2\b}{\l}\Re\left(\int\phi\left(\frac{r}{r(t)}-1\right)\psi\overline{\pa_r\tu}\right)\right|\\
\nn&+& \left|\frac{\b}{\l}\Re\left(\int\left(\frac{1}{r(t)}\phi'+\frac{N-1}{r}\phi\right)\left(\frac{r}{r(t)}-1\right)\psi\overline{\tu}\right)\right|\\
\nonumber& \lesssim & \frac{(b^k+b\|\e\|_{H^1_\mu})\|\e\|_{H^1_{\mu}}}{\l^4}.
\eea
Injecting \eqref{neoneiooe}, \fref{estnl2}, \fref{estnl4} and \eqref{lpusfpfwu} into \eqref{nceohoeoud} yields:
\bee
\nonumber& &  \frac{\b}{\l} \Re\left(\int i\partial_t\ut\left[\overline{\left(\frac{1}{r(t)}\phi'+\frac{N-1}{r}\phi\right)\left(\frac{r}{r(t)}-1\right)\tu+2\phi\left(\frac{r}{r(t)}-1\right)\pa_r\tilde{u}}\right]\right)\\
& = &  -  \frac{2\b}{\l}\Re\left(\int\phi\left(\frac{r}{r(t)}-1\right)(f(\tq+\tu)-f(\tq))\overline{\pa_r\tu}\right)+O\left(\frac{b}{\l^4}\|\e\|^2_{H^1_\mu}+\frac{b^k}{\l^4}\|\e\|_{H^1_\mu}\right).
\eee
We now inject this into \fref{firsttermvirieloc} which together with \fref{cnkoheofh} concludes the proof of \fref{crc6}.\\
{\it Proof of \fref{estnl2}}: Note first that $\tq$ is localized in the region $r\geq r(t)/2$ due to the cut-off $\zeta_b$ in its definition. Now, the region $r\geq r(t)/2$ corresponds to $y\geq -\frac{r(t)}{2\l(t)}$ and thus: $$\mu\gtrsim 1 \ \ \mbox{for}\ \ r\geq r(t)/2.$$
For $p>2$, we estimate from the Sobolev bound \fref{sobolevlinfty} and the bootstrap assumption \eqref{boot1}:
\bee
\int |\tu|^3|\tq|^{p-2}&=&\frac{1}{\l^2}\int_{y\geq -\frac{r(t)}{2\l(t)}}  |\e|^3 |\qbb|^{p-2}\mu\lesssim \frac{1}{\l^2}\int_{y\geq -\frac{r(t)}{2\l(t)}} |\e|^3\mu\\
&\leq& \frac{1}{\l^2}\|\e\|_{L^\infty(y\geq -\frac{r(t)}{2\l(t)})}\|\e\|^2_{L^2_\mu}\lesssim \frac{1}{\l^2}\|\e\|_{H^1_\mu}\|\e\|^2_{L^2_\mu}\\
& \lesssim & \frac{\delta\|\e\|_{H^1_\mu}^2}{\l^2},
\eee
and \eqref{estnl2} is proved.\\
{\it Proof of \eqref{estnl4}}. Observe that the bootstrap bound \fref{boot1} implies:
\be
\label{neovneonvev}
\|\ut\|_{H^1}\lesssim \frac{\|\e\|_{H^1_\mu}}{\l}\lesssim \delta.
\ee
In view of the Sobolev embeddings, this yields:
$$
\int |\tu|^{p+1}\lesssim \|\tu\|^{p+1}_{H^1}\lesssim \frac{\|\e\|^2_{H^1_\mu}}{\l^2} \|\tu\|^{p-1}_{H^1}\lesssim \frac{\delta^{p-1}\|\e\|^2_{H^1_\mu}}{\l^2},$$
and \eqref{estnl4} is proved.
This concludes the poof of Lemma \ref{lemma:timederivative}.
\end{proof}

\subsection{Coercivity of $\matchal I$}


We now examine the various terms in Lemma \ref{lemma:timederivative}  which correspond to {\it quadratic} interactions. Let us start with the boundary term in time $\mathcal I$:

\begin{Lemma}[Coercivity of $\mathcal I$]
\label{icoervie}
Let $\mathcal I(\ut)$ given by \fref{defI}. Then:
\be
\label{coeritilde}
\mathcal I(\ut)\geq c_0\left(\|\nabla \ut\|_{L^2}^2+\frac{1}{\l^2}\|\ut\|_{L^2}^2\right)
\ee
for some universal constant $c_0>0$. 
\end{Lemma}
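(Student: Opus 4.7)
The plan is to reduce the coercivity of $\mathcal I$ to the classical spectral coercivity of the matrix linearized operator $L=(L_+,L_-)$ around the one dimensional subcritical ground state $Q$, under the orthogonality conditions \fref{ortho}; the Morawetz correction in \fref{defI} is tailored precisely to absorb the Galilean drift $e^{-i\b y}$ carried by the leading profile $Qe^{-i\b y}$.

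\textbf{Step 1: rescaling and Galilean cancellation.} Using \fref{defe} and the change of variable $r=r(t)+\l y$, every term of $\mathcal I(\ut)$ carries the same prefactor $C_\l:=r(t)^{N-1}\l^{-4/(p-1)-1}$; one has $\l^{-2}\|\ut\|^2_{L^2}\sim C_\l\|\e\|^2_{L^2_\mu}$ and $\|\nabla\ut\|^2_{L^2}\sim C_\l\|\e_y\|^2_{L^2_\mu}$, and the relation $r/\l^{\alpha}\sim g_\infty$ from \fref{boot4} gives $C_\l\sim \l^{-2}$. The statement \fref{coeritilde} is therefore equivalent to $C_\l^{-1}\mathcal I(\ut)\gtrsim\|\et\|_{H^1_\mu}^2$ after the Galilean shift $\et=\e e^{i\b y}$. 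The identities
$$|\e_y|^2=|\et_y|^2+\b^2|\et|^2-2\b\Im(\et_y\overline{\et}),\qquad \Im(\e_y\overline{\e})=\Im(\et_y\overline{\et})-\b|\et|^2,$$
combined with $\phi(\alpha by/(2\b))=1+O(by)$ on the soliton core, cause the three quadratic pieces of $\mathcal I$ to collapse exactly to $\tfrac12\int(|\et_y|^2+|\et|^2)\mu$ modulo a remainder $O(b\|\et\|_{H^1_\mu}^2)$ produced by the slow variation of $\phi$. The outer region $\{y\sim r(t)/\l\}$ survives this computation thanks to the choice $\sup|\phi|<\sqrt{1+\b_\infty^2}/\b_\infty$ in \fref{assumpotionohione}, which ensures pointwise positivity of the $\phi$-localized quadratic form even in the absence of any nonlinear correction.

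\textbf{Step 2: expansion of the nonlinearity.} Using $F''(u)[h,h]=p|u|^{p-1}(\Re(he^{-i\arg u}))^2+|u|^{p-1}(\Im(he^{-i\arg u}))^2$ together with the fact that the phase of $\qbb$ is $-\b y-by^2/4$ up to real $O(b+|\tb|)$ perturbations, the quadratic part of $-\int[F(\tq+\ut)-F(\tq)-F'(\tq)\cdot\ut]$ rescales to
$$-\frac{C_\l}{2}\int Q^{p-1}(p\,\et_1^2+\et_2^2)\mu+O\bigl(C_\l(b+|\tb|)\|\et\|_{H^1_\mu}^2\bigr),$$
while the cubic and higher remainders produced by the expansion \fref{cneocneoneo} are bounded, via \fref{sobolevlinfty} and the bootstrap smallness \fref{boot1}, by $C_\l(\|\et\|_{H^1_\mu}+\|\et\|_{H^1_\mu}^{p-1})\|\et\|_{H^1_\mu}^2$. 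Combining Step 1 and Step 2 yields
$$C_\l^{-1}\mathcal I(\ut)=\tfrac12\langle L_+\et_1,\et_1\rangle_\mu+\tfrac12\langle L_-\et_2,\et_2\rangle_\mu+O\bigl((b+|\tb|+\|\et\|_{H^1_\mu})\|\et\|_{H^1_\mu}^2\bigr),$$
so the problem is reduced to a weighted spectral coercivity for $L$ under the orthogonality conditions \fref{ortho}.

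\textbf{Step 3 and main obstacle: weighted spectral estimate.} The classical one dimensional result asserts that $\langle L_+f_1,f_1\rangle+\langle L_-f_2,f_2\rangle\gtrsim\|f\|_{H^1}^2$ as soon as $(f_1,Q)=(f_1,yQ)=(f_2,Q')=(f_2,\Lambda Q)=0$, with a coercivity constant depending only on $p\in(1,5)$ and the non-degeneracy $(\Lambda Q,Q)=(5-p)/(2(p-1))\neq 0$ highlighted in the remark after Lemma \ref{lemma:generalstrat}. The principal technical difficulty is the passage from this unweighted statement to the $\mu$-weighted, $\zeta_b$-truncated setting of \fref{ortho}: $\mu$ vanishes at $y=-2\b/(\alpha b)$ and the orthogonalities involve $\zeta_b$ rather than the bare profiles. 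A clean implementation is to split $\et$ into a soliton-core piece, where $\mu\to 1$ uniformly as $b\to 0$ and the standard spectral estimate applies up to $e^{-c/\sqrt{b}}$ errors, and an exterior piece, for which the potentials in $L_\pm$ are absent and one has the pointwise positivity $L_\pm\geq-\pa_y^2+\tfrac12$ that provides a positive contribution without any orthogonality requirement. Absorbing all $b$-, $|\tb|$- and $\|\et\|_{H^1_\mu}$-corrections against the universal coercivity constant, using \fref{boot1}-\fref{boot4}, closes the argument and yields \fref{coeritilde} with some universal $c_0>0$.
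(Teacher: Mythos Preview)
Your proposal is correct and follows essentially the same route as the paper: rescale, exploit the Galilean cancellation between the kinetic, mass and Morawetz pieces on the soliton core, use the discriminant condition \fref{assumpotionohione} on the exterior, Taylor-expand the nonlinearity, and conclude via the spectral coercivity of $(L_+,L_-)$ under the orthogonality conditions \fref{ortho}. One organizational caveat: the displayed identity at the end of your Step~2 is only valid on the core $|y|\lesssim 1/\sqrt b$, since $\phi(z)-1$ is $O(1)$ (not $O(b|y|)$) on the exterior; the paper makes this explicit by performing the zone splitting \emph{before} passing to the $\et$ variable and keeping the exterior contribution in $\e$ form where the discriminant argument applies directly (cf.\ \fref{neovovnone}), but your Steps~1 and~3 already contain the correct exterior treatment, so this is purely a matter of presentation.
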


\begin{proof}[Proof of Lemma \ref{icoervie}]
We first renormalize:
\bee
\nonumber \mathcal I(\ut)& = & \frac{1}{2\l^2}\Bigg\{\int |\pa_y\e|^2\mu+2\b\Im\left(\int \phi(z)\pa_y\e\overline{\e}\right)\mu+(1+\b^2)\int |\e|^2\mu\\
&-& 2\int \left(F(\qbb+\e)-F(\qbb)-F'(\qbb)\cdot\e\right)\mu\Bigg\}.
\eee
where 
\be
\label{defz}
z=\frac{r}{r(t)}-1=\frac{\alpha b}{2\beta}y,\ \ \mu=(1+z)^{N-1}.
\ee
We compute:
$$F''(\qbb)\cdot\e\cdot\e=\frac{p-1}{4}\overline{\qbb^2}\e^2+\frac{p+1}{2}|\qbb|^{p-1}|\e|^2+\frac{p-1}{4}|\qbb|^{p-3}\qbb^2\overline{\e}^2$$  
and estimate by homogeneity:
\bea
\label{estgjdogjF}
&&\left|F(Q+\e)-F(\qbb)-F'(\qbb)\cdot\e-\frac 12F''(\qbb)\cdot\e\cdot\e\right|\\
\nonumber&\lesssim& |\e|^{p+1}+|\e|^3|\qbb|^{p-2}{\bf 1}_{p>2}.
\eea
We conclude using the bounds \fref{estnl2}, \fref{estnl4}:
\bee
& & 2\int\left[F(\qbb+\e)-F(\qbb)-F'(\qbb)\cdot\e\right]\mu \\
&  =  &\int\left[\frac{p-1}{4}\overline{\qbb^2}\e^2+\frac{p+1}{2}|\qbb|^{p-1}|\e|^2+\frac{p-1}{4}|\qbb|^{p-3}\qbb^2\overline{\e}^2\right]\mu\\
&&+ O\left(\l^2\int |\tu|^3|\tq|^{p-2}{\bf 1}_{p>2}+|\tu|^{p+1}\right)\\
& = & p\int\te_1^2\zeta_bQ^{p-1}+\int\te_2^2\zeta_bQ^{p-1} + O\left(b\|\e\|^2_{L^2_\mu}+\|\e\|^3_{H^1_\mu}\right)\\
& = & p\int\te_1^2\zeta_bQ^{p-1}+\int\te_2^2\zeta_bQ^{p-1} + O\left(\delta^C\|\e\|^2_{H^1_\mu}\right)
\eee
where we used the estimates \eqref{estnl2} and \eqref{estnl4}, the bootstrap assumption \eqref{boot2}, the fact that 
$$\qbb=\zeta_bQe^{-i\b y}+O(be^{-c|y|})\ \ \mbox{and}\ \ \zeta_bQ^{p-1}\mu=\zeta_bQ^{p-1}+O(b\zeta_be^{-c|y|}),$$
and where we recall from \fref{vneneononeov}
 that:
$$\te=\e e^{i\b y}.$$ Together with $\b=\b_\infty+\tb$  and the bootstrap assumptions \eqref{boot2} \eqref{boot3}, this yields the preliminary estimate:
\bea
\label{prelimiu}
 \mathcal I(\ut)& = & \frac{1}{2\l^2}\Bigg\{\int |\pa_y\e|^2\mu+2\b_\infty \Im\left(\int \phi(z)\pa_y\e\overline{\e}\mu\right)+\int (1+\b_\infty^2)|\e|^2\mu\\
\nonumber&-& p\int\te_1^2Q^{p-1}-\int\te_2^2Q^{p-1} + O\left(\delta^C\|\e\|^2_{H^1_\mu}\right)\Bigg\}.
\eea
Let us now split the potential part in the zones $|y|\leq \frac{1}{\sqrt{b}}$, $|y|\geq \frac{1}{\sqrt{b}}$. Away from the soliton, the reduced discriminant of the quadratic form
$$|\pa_y\e|^2+2\beta_\infty \Im\left(\phi(z)\pa_y\e\overline{\e}\right)+(1+\b_\infty^2)|\e|^2$$ is given by $$\Delta=\beta_\infty^2\phi^2(z)-(1+\b_\infty^2)^2<0$$ from \fref{assumpotionohione} and thus:
$$\int_{|y|\geq  \frac{1}{\sqrt{b}}}\left[|\pa_y\e|^2+2\beta_\infty \Im\left(\phi(z)\pa_y\e\overline{\e}\right)+(1+\b_\infty^2)|\e|^2\right]\gtrsim \int_{|y|\geq  \frac{1}{\sqrt{b}}}\left[|\pa_y\e|^2+|\e|^2\right].$$ On the singularity $|y|\lesssim \frac{1}{\sqrt{b}}$, we have from \fref{assumpotionohione}: $$|\phi(z)-1|\lesssim |z|\lesssim \sqrt{b}$$ and thus:
\bee
&&\int_{|y|\leq  \frac{1}{\sqrt{b}}}\left[|\pa_y\e|^2+2\beta_\infty \Im\left(\phi(z)\pa_y\e\overline{\e}\right)+(1+\b_\infty^2)|\e|^2\right]\mu\\
& = & \int_{|y|\leq  \frac{1}{\sqrt{b}}} \left[|\pa_y\et|^2+|\et|^2\right]+O(\sqrt{b}\|\e\|_{H^1_{\mu}}^2)
\eee
Collecting the above bounds yields:
\bea
\label{neovovnone}
\nonumber 2\mathcal I(\ut)& = & \int_{|y|\leq  \frac{1}{\sqrt{b}}} \left[|\pa_y\et|^2+|\et|^2\right]-p\int\te_1^2\zeta_bQ^{p-1}+\int\te_2^2\zeta_bQ^{p-1}\\
& + &  \int_{|y|\geq  \frac{1}{\sqrt{b}}}\left[|\pa_y\e|^2+|\e|^2\right]\mu+O(\delta^C\|\e\|_{H^1_{\mu}}^2).
\eea
We now recall the following coercivity property of the linearized energy in the one dimensional subcritical case which is a well known consequence of the variational characterization of $Q$, see for example \cite{CGNT}:

\begin{Lemma}[Coercivity of the linearized energy]
\label{lemmacoerc}
There holds for some universal constant $c_0>0$ :  $\forall \e\in H^1(\R)$, 
\bea\label{coerclinearenergy}
 (L_+(\e_1),\e_1)+(L_-(\e_2),\e_2)& \geq & c_0\|\e\|^2_{H^1}\\
\nonumber & - & \frac{1}{c_0}\left\{(\e_1,Q)^2+(\e_1,yQ)^2+(\e_2,\Lambda Q)^2\right\}.
\eea
\end{Lemma}

We now inject the choice of orthogonality conditions \eqref{ortho} into \fref{coerclinearenergy} and obtain using a standard localization argument\footnote{using the smallness of $b$ and the exponential localization of $Q$, see for example \cite{MMR1}}:
\bee
&&\int_{|y|\leq  \frac{1}{\sqrt{b}}} \left[|\pa_y\et|^2+|\et|^2\right]-p\int\te_1^2\zeta_bQ^{p-1}+\int\te_2^2\zeta_bQ^{p-1}\\
& \gtrsim & \int_{|y|\leq  \frac{1}{\sqrt{b}}} \left[|\pa_y\et|^2+|\et|^2\right]+O(\delta^C\|\et\|_{H^1_{\mu}}^2)\\
& \gtrsim &  \int_{|y|\leq  \frac{1}{\sqrt{b}}}\left[|\pa_y\e|^2+|\e|^2\right]\mu+O(\delta^C\|\e\|_{H^1_{\mu}}^2)
\eee
which together with \fref{neovovnone} concludes the proof of \fref{coeritilde}.
\end{proof}

\begin{remark} One can easily extract from the above proof the upper bound:
\be
\label{upeeri}
\mathcal I\lesssim \|\nabla \ut\|_{L^2}^2+\frac{1}{\l^2}\|\ut\|_{L^2}^2.
\ee
\end{remark}


\subsection{Estimate for $\matchal J(\ut)$}


We now treat the $\matchal J(\ut)$ term given by \fref{kutilde}. We first extract the leading order quadratic terms in $\matchal J(\ut)$ and claim that is a $b$ degenerate quadratic term. A suitable choice of the cut off function $\phi$ would allow us sign this term again as in \cite{RSmin}, but we shall not need this additional structural fact here.

\begin{Lemma}[Leading order terms in $\matchal J(\ut)$]
\label{lemmajut}
We have the rough bound:
\be
\label{formequad}
 |\mathcal J(\ut)| \lesssim \frac{b}{\l^4}\|\e\|^2_{H^1_\mu}.
\ee
\end{Lemma}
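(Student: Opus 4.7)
I would split $\mathcal{J} = J_1 + J_2 + J_3$ according to the three terms of \eqref{kutilde} and treat each separately, with the key being to extract a factor of $b$ from each piece (or from the combination). The starting algebraic reduction for $J_1$ uses the identity $\Im\int f(w)\overline{w}\,r^{N-1}dr = 0$, valid for any complex $w$ since $f(w)\overline{w}=|w|^{p+1}\in\mathbb R$. Applied to $w=u$ and $w=\tilde Q$ this gives
\[
\Im(f(u)-f(\tilde Q),\overline{\tilde u}) = -\Im\int\bigl(f(u)\overline{\tilde Q}+f(\tilde Q)\overline{u}\bigr)\,r^{N-1}dr,
\]
after which Taylor expanding $f(u)=f(\tilde Q)+f'(\tilde Q)\tilde u+R(\tilde u)$ with $R(\tilde u):=f(\tilde u+\tilde Q)-f(\tilde Q)-f'(\tilde Q)\tilde u$ and exploiting the pointwise identity $f'(\tilde Q)\tilde u\overline{\tilde Q}+f(\tilde Q)\overline{\tilde u}=(p+1)|\tilde Q|^{p-1}\Re(\tilde u\overline{\tilde Q})\in\mathbb R$ shows that the linear-in-$\tilde u$ contribution has zero imaginary part. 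One obtains the exact identity $J_1 = \frac{1+\beta^2}{\lambda^2}\Im\int R(\tilde u)\overline{\tilde Q}\,r^{N-1}dr$, so that $J_1$ is manifestly nonlinear (at least quadratic) in $\tilde u$.

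For $J_2$ I would integrate by parts in $r$. Since $\phi$ has compact support no boundary term arises, and $\partial_r$ acting on $\phi(\tfrac{r}{r(t)}-1)$ produces the factor $1/r(t)\sim b/\lambda$ which, combined with the $1/\lambda$ prefactor of $J_2$, supplies the sought $b$-smallness. The remaining IBP pieces (those in which $\partial_r$ hits the $\tilde Q$-dependent factors or the $(N-1)/r$ measure) are then brought into a form analogous to $J_1$ via the same Taylor expansion and the cancellation identity above. For $J_3$, the integrand is exactly $R(\tilde u)$, and $|\partial_t\tilde Q|$ is controlled pointwise using the equation \eqref{eqwgobale} rewritten as $\partial_t\tilde Q = i(-\Delta\tilde Q-f(\tilde Q)+\psi)$, together with the decay \eqref{deacypb} of $\tilde Q$ and the bound \eqref{roughboundpsi} on $\psi$.

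The surviving nonlinear integrals are then handled by the pointwise bound $|R(\tilde u)|\lesssim|\tilde u|^2|\tilde Q|^{p-2}+|\tilde u|^p\mathbf 1_{p>2}$ from \eqref{cncneone}, the nonlinear estimates \eqref{estnl2}, \eqref{estnl4}, the one-dimensional Sobolev inequality \eqref{sobolevlinfty}, and the bootstrap bounds \eqref{boot1}--\eqref{boot4}. The power counting in $\lambda$ (with $r(t)^{N-1}\sim\lambda^{\alpha(N-1)}=\lambda^{(5-p)/(p-1)}$ from the bootstrap \eqref{boot4}) systematically produces $\lambda^{-4}$ in the prefactor of each quadratic integral in $\varepsilon$.

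The main obstacle is verifying that the leading $O(1/\lambda^4)\|\varepsilon\|^2_{H^1_\mu}$ contributions from $J_1$, the IBP-ed $J_2$, and $J_3$ combine to leave only terms with an explicit $b$-prefactor: no single term is $b$-small on its own, but the algebraic identity for $J_1$ (which kills the apparent linear-in-$\tilde u$ quadratic form), the $\phi'/r(t)$ factor generated by IBP in $J_2$, and the structure of $\partial_t\tilde Q$ in $J_3$ must be shown to conspire to produce the claimed $b/\lambda^4\|\varepsilon\|^2_{H^1_\mu}$ bound. As the statement preceding the lemma indicates, a more refined choice of $\phi$ would in fact sign the surviving quadratic form as in \cite{RSmin}; here only the cruder size estimate is needed, but the cancellation structure is the same.
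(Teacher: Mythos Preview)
Your treatment of $J_1$ is correct and matches the paper's: the identity $\Im\int f(w)\overline{w}=0$ together with the reality of $f'(\tilde Q)\tilde u\,\overline{\tilde Q}+f(\tilde Q)\overline{\tilde u}$ gives exactly $J_1=\frac{1+\beta^2}{\lambda^2}\Im\int R(\tilde u)\overline{\tilde Q}$. But this term is of size $\frac{1}{\lambda^4}\|\varepsilon\|_{H^1_\mu}^2$ with no $b$-gain, as you acknowledge. The gap is in how you propose to find the cancellation.

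Your plan for $J_3$ --- bounding $|\partial_t\tilde Q|$ pointwise via the equation $\partial_t\tilde Q=i(-\Delta\tilde Q-f(\tilde Q)+\psi)$ --- destroys the structure needed. A pointwise bound yields only $|J_3|\lesssim\frac{1}{\lambda^4}\|\varepsilon\|_{H^1_\mu}^2$, and writing $\partial_t\tilde Q$ in terms of $\Delta\tilde Q+f(\tilde Q)$ does not exhibit the generators that pair with $J_1,J_2$. The paper instead computes $\partial_t\tilde Q$ directly from the modulation ansatz \eqref{defqtilde} by the chain rule in $(\lambda,r,\gamma,b,\tilde\beta)$: the $\gamma_t$ derivative contributes $\frac{i(1+\beta^2)}{\lambda^2}\tilde Q$, the $r_t$ derivative contributes $\frac{2\beta}{\lambda}\partial_r\tilde Q$, and all remaining pieces (from $\lambda_t,b_t,\tilde\beta_t$ and the modulation-equation errors) carry an explicit factor of $b$. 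Plugging this into $J_3$, the $\frac{i(1+\beta^2)}{\lambda^2}\tilde Q$ piece is \emph{exactly} $-J_1$ in your notation (this is the paper's ``nonlinear cancellation on the phase term,'' equation \eqref{cancellationvoven}), so $J_1$ and this part of $J_3$ sum to zero identically.

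Similarly, your IBP of $J_2$ alone does not close: after writing $\Re\bigl((f(u)-f(\tilde Q))\overline{\partial_r\tilde u}\bigr)=\partial_r[F(u)-F(\tilde Q)-f(\tilde Q)\overline{\tilde u}]-\Re\bigl(R(\tilde u)\overline{\partial_r\tilde Q}\bigr)$, the second term is again $O(\frac{1}{\lambda^4}\|\varepsilon\|^2_{H^1_\mu})$ with no $b$. It is precisely the $\frac{2\beta}{\lambda}\partial_r\tilde Q$ piece of $J_3$ that absorbs it: since $\phi=1+O(b|y|)$ on the support of $\tilde Q$, the combination $J_2+(\text{this piece of }J_3)$ becomes, up to $b$-small errors, $-\frac{2\beta}{\lambda}\int\partial_r[F(u)-F(\tilde Q)-f(\tilde Q)\overline{\tilde u}]$, whose IBP now yields only the $\frac{1}{r(t)}\sim\frac{b}{\lambda}$ gain (this is the paper's Step~3, equations \eqref{labelBB}--\eqref{labelH}).

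In short, the conspiracy you allude to is not verified by your decomposition of $\partial_t\tilde Q$; it requires the modulation-parameter chain-rule computation, which is the organizing principle of the paper's proof.
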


\begin{proof}[Proof of Lemma \ref{lemmajut}]

{\bf step 1} The $\pa_t\tq$ term. We compute $\pa_t\tq$ from \fref{defqtilde}: 
\bee
\tq_t & = & i\gamma_t\tq-\frac{2}{p-1}\frac{\l_t}{\l}\tq-\frac{r-r(t)}{\l}\frac{\l_t}{\l}\frac{1}{\l^{\frac{2}{p-1}}}\qbb'\left(\frac{r-r(t)}{\l(t)}\right)e^{i\gamma}\\
&&-\frac{r_t(t)}{\l}\frac{1}{\l^{\frac{2}{p-1}}}\qbb'\left(\frac{r-r(t)}{\l(t)}\right)e^{i\gamma}+b_t\frac{1}{\l^{\frac{2}{p-1}}}\pa_bQ_{b(t), \tb(t)}\left(\frac{r-r(t)}{\l(t)}\right)e^{i\gamma(t)}\\
&&+\tb_t\frac{1}{\l^{\frac{2}{p-1}}}\pa_{\tb} Q_{b(t), \tb(t)}\left(\frac{r-r(t)}{\l(t)}\right)e^{i\gamma(t)}\\
&=&\left(\frac{i(1+\b^2)}{\l^2}+\frac{2}{p-1}\frac{b}{\l^2}\right)\tq+\frac{b}{\l}\frac{r-r(t)}{\lambda}\partial_r \tq+\frac{2\b}{\l}\partial_r\tq\\\nonumber&+& \frac{1}{\l^{2+\frac{2}{p-1}}}O\left(\left[b^2+\textrm{Mod}+\left|b_s+(1-\a)b^2-\frac{b}{\b}\P_2-b\P_1\right|\right]\zeta_b|y|^ce^{-|y|}\right)\\
&=&\frac{i(1+\b^2)}{\l^2}\tq+\frac{2\b}{\l}\partial_r\tq+  \frac{1}{\l^{2+\frac{2}{p-1}}}O\left(b\zeta_b|y|^ce^{-|y|}\right)
\eee
where we used  \eqref{loirgamma}  and the decay estimate \fref{deacypb} in the last step. This yields:
\bee
 &- & \Re\left(\partial_t\tq,\overline{(f(\tu+\tq)-f(\tq)-f'(\tq)\cdot\tu)}\right)\\
\nonumber&=& -\frac{1+\b^2}{\l^2}\Im\left(\int (f(\tu+\tq)-f(\tq)-f'(\tq)\cdot\tu)\overline{\tq}\right)\\ 
\nonumber&-&\frac{2\b}{\l}\Re\left(\int (f(\tu+\tq)-f(\tq)-f'(\tq)\cdot\tu)\overline{\pa_r\tq}\right)\\
\nn& + & \frac{1}{\l^4}O\left(\int b\zeta_b|y|^ce^{-|y|}\left|f(\qbb+\e)-f(\qbb)-f'(\qbb)\cdot \e\right|\mu\right)
\eee
where we used the estimates \eqref{estnl2} and the bootstrap assumptions \eqref{boot1} and \eqref{boot2}. We estimate the nonlinear terms using \fref{cncneone}, \fref{boot1}, \fref{sobolevlinfty}:
\bee
& & \int b\zeta_b|y|^ce^{-|y|}\left|f(\qbb+\e)-f(\qbb)-f'(\qbb)\cdot \e\right|\mu\\
& \lesssim &  b\int\zeta_b|y|^ce^{-|y|}\left[ |\qbb|^{p-2}|\e|^2+|\e|^p{\bf 1}_{p>2}\right]\mu \\
&\lesssim&   b\left[1+\|\e\|^{p-2}_{L^{\infty}(y\geq-\frac{\delta}{b})}{\bf 1}_{p>2}\right]\int |\e|^2\mu\\
& \lesssim & b\|\e\|_{H^1_{\mu}}^2.
\eee
Injecting the collection of above bounds into \fref{kutilde} yields the preliminary computation:
\bea\label{fanta}
 \mathcal J(\ut) & = & - \frac{1+\b^2}{\lambda^2}\Im\int\left(f(u)-f(\tq),\overline{\ut}\right)\\
\nonumber &  - & \frac{2\b}{\l}\Re\left(\int\phi\left(\frac{r}{r(t)}-1\right)(f(\tq+\tu)-f(\tq))\overline{\pa_r\tu}\right) \\
\nn&-&\frac{1+\b^2}{\l^2}\Im\left(\int (f(\tu+\tq)-f(\tq)-f'(\tq)\cdot\tu)\overline{\tq}\right)\\
\nn&-&\frac{2\b}{\l}\Re\left(\int (f(\tu+\tq)-f(\tq)-f'(\tq)\cdot\tu)\overline{\pa_r\tq}\right)+ O\Bigg(\frac{b}{\l^4}\|\e\|^2_{H^1_\mu}\Bigg).
\eea

{\bf step 2} Nonlinear cancellation on the phase term. We observe using the explicit formula for $f$ and 
\be
\label{hieoheioheo}
f'(\tq)\cdot\ut=\frac{p+1}{2}|\tq|^{p-1}\ut+\frac{p-1}{2}|\tq|^{p-3}\tq^2\overline{\ut}.
\ee
the nonlinear cancellation:
\bea
\label{cancellationvoven}
\nonumber&-& \frac{1+\b^2}{\lambda^2}\Im\int\left(f(u)-f(\tq),\overline{\ut}\right)-\frac{1+\b^2}{\l^2}\Im\left(\int (f(\tu+\tq)-f(\tq)-f'(\tq)\cdot\tu)\overline{\tq}\right)\\
\nonumber&=& - \frac{1+\b^2}{\lambda^2}\Im\left(\int f(\ut+\tq),\overline{\tq+\ut}\right)+\frac{1+\b^2}{\lambda^2}\Im\left(\int f(\tq),\overline{\tq}\right)\\
\nn & + & \frac{1+\b^2}{\lambda^2}\Im\left(\int f(\tq)\overline{\ut}+f'(\tq)\cdot\tu\overline{\tq}\right)\\
\nonumber&=& \frac{1+\b^2}{\lambda^2}\Im\left(\int f(\tq)\overline{\ut}+f'(\tq)\cdot\tu\overline{\tq}\right)\\
\nn&=& \frac{1+\b^2}{\lambda^2}\Im\left(\int |\tq|^{p-1}\left(\tq\overline{\tu}+\frac{p+1}{2}\tu\overline{\tq}+\frac{p-1}{2}\tq\overline{\tu}\right)\right)\\
&=& 0.
\eea

{\bf step 3} Conclusion. Let $\varphi$ be a smooth compactly supported cut-off function which is 1 in the neighborhood of the support of $\phi$, and 0 in the neighborhood of $z=-1$. We compute:
\bee 
A_1&=&  - \frac{2\b}{\l}\Re\left(\int\varphi\left(\frac{r}{r(t)}-1\right)(f(\tq+\tu)-f(\tq))\overline{\pa_r\tu}\right) \\
\nonumber&-&\frac{2\b}{\l}\Re\left(\int\varphi\left(\frac{r}{r(t)}-1\right) (f(\tu+\tq)-f(\tq)-f'(\tq)\cdot\tu)\overline{\pa_r\tq}\right)\\
&=&  -\frac{2\b}{\l}\Re\left(\int \varphi\left(\frac{r}{r(t)}-1\right)f(\tq+\tu)\overline{\pa_r\tq+\pa_r\tu}\right)\\
&+& \frac{2\b}{\l}\Re\left(\int \varphi\left(\frac{r}{r(t)}-1\right)f(\tq)\overline{\pa_r\tq}\right)\\
&  + & \frac{2\b}{\l}\Re\left(\int\varphi\left(\frac{r}{r(t)}-1\right)(f(\tq)\overline{\pa_r\tu}+f'(\tq)\cdot \tu \overline{\pa_r\tq})\right) \\
&=& -\frac{2\b}{\l}\Re\int\varphi\left(\frac{r}{r(t)}-1\right)\pa_r[F(u)-F(\tq)-f(\tq)\overline{\tu}].
\eee
Integrating by parts in $r$, we obtain:
$$A_1= \frac{2\b}{\l}\Re\int\left[\frac{1}{r(t)}\varphi'+\frac{N-1}{r}\varphi\right]\left(\frac{r}{r(t)}-1\right)(F(u)-F(\tq)-f(\tq)\overline{\tu}).$$
In view of the properties of $\varphi$, we have
\be\label{zutttt}
\frac{1}{r}\sim \frac{1}{r(t)}\textrm{ on the support of }\varphi\left(\frac{\cdot}{r(t)}-1\right),
\ee
and thus
\bea\label{labelB} 
 A_1&=& \frac{2\b}{\l}\Re\int\left[\frac{1}{r(t)}\varphi'+\frac{N-1}{r}\varphi\right]\left(\frac{r}{r(t)}-1\right)\\
\nn&&\times\left(F(u)-F(\tq)-f(\tq)\overline{\tu}-\frac{1}{2}F''(\tq)(\tu, \tu)\right)+O\left(\frac{b}{\l^4}\|\e\|^2_{H^1_\mu}\right).
\eea
Next, we estimate using \fref{estgjdogjF}, the nonlinear estimates \fref{estnl2}, \fref{estnl4}, \eqref{zutttt} and \fref{frezzing}:
\bee
\nonumber &&\left|\frac{2\b}{\l}\Re\int\left[\frac{1}{r(t)}\varphi'+\frac{N-1}{r}\varphi\right]\left(\frac{r}{r(t)}-1\right)\left(F(u)-F(\tq)-f(\tq)\overline{\tu}-\frac{1}{2}F''(\tq)(\tu, \tu)\right)\right|\\
& \lesssim & \frac{b}{\l^4}\int \left[|\e|^{p+1}+|\e|^3|\qbb|^{p-2}{\bf 1}_{p>2}\right]\mu\lesssim  \frac{b\delta^C}{\l^4}\|\e\|^2_{H^1_\mu},
\eee
which together with \fref{labelB} yields
\bea
\label{labelBB}
A_1 &=& O\Bigg(\frac{b}{\l^4}\|\e\|^2_{H^1_\mu}\Bigg).
\eea

Since $\varphi=1$ on the support of $\phi$, we have:
\bee
\nonumber &  & \frac{2\b}{\l}\Re\left(\int\phi\left(\frac{r}{r(t)}-1\right)(f(\tq+\tu)-f(\tq))\overline{\pa_r\tu}\right)\\
&=&     \frac{2\b}{\l}\Re\left(\int(\varphi\phi)\left(\frac{r}{r(t)}-1\right)(f(\tq+\tu)-f(\tq))\overline{\pa_r\tu}\right)
\eee
and thus from \fref{frezzing}:
\bea\label{labelC}
A_2& = & -\frac{2\b}{\l}\Re\left(\int\phi\left(\frac{r}{r(t)}-1\right)(f(\tq+\tu)-f(\tq))\overline{\pa_r\tu}\right)\\
\nn&  + & \frac{2\b}{\l}\Re\left(\int\varphi\left(\frac{r}{r(t)}-1\right)(f(\tq+\tu)-f(\tq))\overline{\pa_r\tu}\right) \\
\nn&=& -\frac{2\b}{\l}\Re\Bigg(\int\varphi\left(\frac{r}{r(t)}-1\right)\left[\phi\left(\frac{r}{r(t)}-1\right)-1\right](f(\tq+\tu)-f(\tq))\overline{\pa_r\tu}\Bigg).
\eea
We then observe the identity:
\bee
&&\Re\left(\pa_r[F(\tq+\tu)-F(\tq)-f(\tq)\overline{\tu}]-(f(\tu+\tq)-f(\tq)-f'(\tq)\cdot\tu)\overline{\pa_r\tq}\right)\\
& = & \Re\Bigg(f(\tq+\tu)(\overline{\pa_r\tq}+\overline{\pa_r\tu})-f(\tq)\overline{\pa_r\tq}-f'(\tq)\cdot\pa_r\tq\overline{\tu}-f(\tq)\overline{\pa_r\tu}-f(\tq+\tu)\overline{\pa_r\tq}\\
&+&f(\tq)\overline{\pa_r\tq}+f'(\tq)\cdot\tu\overline{\pa_r\tq}\Bigg)\\
&=& \Re\left((f(\tq+\tu)-f(\tq))\overline{\pa_r\tu}\right) + \Re\left(-f'(\tq)\cdot\pa_r\tq\overline{\tu}+f'(\tq)\cdot\tu \overline{\pa_r\tq}\right)\\
&=& \Re\left((f(\tq+\tu)-f(\tq))\overline{\pa_r\tu}\right)\\
&+&\Re\left(-\pa_uf(\tq)\pa_r\tq\overline{\tu}-\pa_{\overline{u}}f(\tq)\overline{\pa_r\tq}\overline{\tu}+\pa_uf(\tq)\tu \overline{\pa_r\tq}+\pa_{\overline{u}}f(\tq)\overline{\tu}\overline{\pa_r\tq}\right)\\
&=& \Re\left((f(\tq+\tu)-f(\tq))\overline{\pa_r\tu}\right)+\Re\left(-\pa_uf(\tq)\pa_r\tq\overline{\tu}+\pa_uf(\tq)\tu \overline{\pa_r\tq}\right)\\
&=& \Re\left((f(\tq+\tu)-f(\tq))\overline{\pa_r\tu}\right), 
\eee
where we used in the last inequality the fact that
$$\partial_uf(\tq)=\frac{p+1}{2}|\tq|^{p-1}\in\RR.$$ Injecting this into \fref{labelC} and using \fref{frezzing}, \fref{defz} yields:
\bea\label{labelE}
\nn A_2& = & -\frac{2\b}{\l}\Re\Bigg(\int\varphi\left(\frac{r}{r(t)}-1\right)\left[\phi\left(\frac{r}{r(t)}-1\right)-1\right]\\
\nn&\times&\pa_r[F(\tu+\tq)-F(\tq)-f(\tq)\overline{\tu}]\Bigg)\\
\nn&+&  \frac{2\b}{\l}\Re\Bigg(\int\varphi\left(\frac{r}{r(t)}-1\right)\left[\phi\left(\frac{r}{r(t)}-1\right)-1\right]\\
\nn&\times& (f(\tu+\tq)-f(\tq)-f'(\tq)\cdot\tu)\overline{\pa_r\tq}\Bigg)\\
\nn&=& \frac{2\b}{\l^2}\Re\Bigg(\int\left(\frac{\a b}{2\b}\pa_z\left(\varphi(z)\left[\phi(z)-1\right]\right)+\frac{(N-1)\l}{r}\varphi(z)\left[\phi(z)-1\right]\right)\left(\frac{r}{r(t)}-1\right)\\
\nn&\times&(F(\tu+\tq)-F(\tq)-f(\tq)\overline{\tu})\Bigg)\\
\nn&+&  \frac{2\b}{\l}\Re\Bigg(\int\varphi\left(\frac{r}{r(t)}-1\right)\left[\phi\left(\frac{r}{r(t)}-1\right)-1\right]\\
&\times& (f(\tu+\tq)-f(\tq)-f'(\tq)\cdot\tu)\overline{\pa_r\tq}\Bigg).
\eea
Since $\phi(0)=1$, we obtain from \fref{defz}:
\be
\label{nekoneneo}
\left|\phi(z)-1\right|\lesssim |z|\lesssim b|y|,\ \ \left|\pa_z\left(\phi(z)-1\right)\right|\lesssim 1.
\ee We inject this into \fref{labelE}, use the homogeneity bounds \fref{cneocneoneo}
 and $$\left|F(\qbb+\e)-F(\qbb)-f(\qbb)\e\right|\lesssim |\qbb|^{p-1}|\e|^2+|\e|^{p+1},$$ the pointwise bound: 
$$|\phi(z)|+|\pa_z\phi(z)|\lesssim 1, \ \ \frac{\l}{r}(1+|z|)\lesssim \frac{\l}{r}\left|\frac{r}{r(t)}\right|\lesssim b\ \  \mbox{on}\ \ \rm Supp (\phi)$$ and the decay \fref{deacypb} to estimate:
\bea
\label{labelF}
\nn A_2& \lesssim & \frac{b}{\l^4}\int \left[|\qbb|^{p-1}|\e|^2+|\e|^{p+1}\right]\left[\left|\pa_z(\varphi(z)(\phi(z)-1)\right|+\varphi(z)\left|\phi(z)-1\right|\right]\mu\\
\nn & + & \frac{1}{\l^4}\int \left[|\qbb|^{p-2}|\e|^2+|\e|^p{\bf 1}_{p>2}\right]\varphi(z)\left|\phi(z)-1\right||\pa_y\qbb|\mu\\
 \nn & \lesssim & \frac{b}{\l^4}\int\left[|\e|^{p+1}+b|y|^C\zeta_be^{-(p-1)|y|}|\e|^2\right]\mu\\
 \nn & + & \frac{1}{\l^4}\int b|y|\left[|\e|^2|y|^C\zeta_be^{-(p-1)|y|}+\zeta_b|\e|^p{\bf 1}_{p>2}e^{-c|y|}\right]\mu\\
 & \lesssim & \frac{b}{\l^4}\|\e\|^2_{H^1_\mu}
 \eea
 where we used \fref{estnl4} and the Sobolev bound \fref{sobolevlinfty} in the last step. We conclude from \eqref{labelBB}, \eqref{labelF}:
\be\label{labelG}
A_1-A_2 =  O\Bigg(\frac{b}{\l^4}\|\e\|^2_{H^1_\mu}\Bigg).
\ee
The function $1-\varphi$ is supported by construction in $y\lesssim -\frac1 b$ where $\tq$ vanishes, and hence \fref{labelG} ensures:
\bea\label{labelH}
-\frac{2\b}{\l}\Re\left(\int\phi\left(\frac{r}{r(t)}-1\right)(f(\tq+\tu)-f(\tq))\overline{\pa_r\tu}\right)&&\\
\nonumber -\frac{2\b}{\l}\Re\left(\int (f(\tu+\tq)-f(\tq)-f'(\tq)\cdot\tu)\overline{\pa_r\tq}\right)&=& O\left(\frac{b}{\l^4}\|\e\|^2_{H^1_\mu}\right).
\eea
In view of \eqref{fanta}, \eqref{cancellationvoven} and \eqref{labelH}, we obtain the expansion of quadratic terms in $\matchal J(\ut)$:
$$J(\ut)  =  O\Bigg(\frac{b}{\l^4}\|\e\|^2_{H^1_\mu}\Bigg)$$
which is the wanted estimate \eqref{formequad}. This concludes the proof of Lemma \ref{lemmajut}.
\end{proof}



\section{Existence of ring solutions}
\label{sectionconclusion}

We conclude in this section the proof of Theorem \ref{thm2}. We start with closing the bootstrap Proposition \ref{prop:boot} using the monotonicity tools developed in the previous section, and then prove the existence of a ring solution using a now standard Schauder type compactness argument and a backwards integration of the flow from blow up time.


\subsection{Closing the bootstrap}


We are now in position to close the bootstrap i.e. Proposition \ref{prop:boot}.

\begin{proof}[Proof of Proposition \ref{prop:boot}]
{\bf step 1} Pointwise control of $\e$. In view of \eqref{crc6}, we have:
\bee
\frac{d\left(\frac{\mathcal I(\ut)}{\l^\theta}\right)}{dt}&=&\frac{1}{\l^\theta}\frac{d\mathcal I(\ut)}{dt}-\theta\frac{\l_t}{\l^{\theta+1}}\mathcal I(\ut)\\
&=& \frac{1}{\l^\theta}\mathcal J(\ut)+\theta\frac{b}{\l^{2+\theta}}\mathcal I(\ut)-\theta\frac{\P_1}{\l^{2+\theta}}\mathcal I(\ut)\\
&&-\left(\lsl+b-\P_1\right)\frac{\mathcal I(t)}{\l^{2+\theta}}+O\left(\frac{b}{\l^{4+\theta}}\|\e\|^2_{H^1_\mu}+\frac{b^k}{\l^{4+\theta}}\|\e\|_{H^1_\mu}\right).
\eee
We estimate from \eqref{loirgamma}, the bootstrap assumptions \eqref{boot2} \eqref{boot3}, and \fref{upeeri}:
\bee
\left|\frac{\P_1}{\l^{2+\theta}}\mathcal I(\ut)\right|+\left|\left(\lsl+b\right)\frac{\mathcal I(t)}{\l^{2+\theta}}\right|&\lesssim&\frac{b}{\l^{4+\theta}}\left[b^2+b\|\e\|_{H^1_\mu}+b^k\right]\|\e\|_{H^1_\mu}^2\\
& \lesssim & \frac{b\delta^C}{\l^{4+\theta}}\|\e\|^2_{H^1_\mu},
\eee
which together with \eqref{coeritilde} yields the ecistence of a constant $C>0$ such that
\be
\label{labelstar}
\frac{d\left(\frac{\mathcal I(\ut)}{\l^\theta}\right)}{dt}\geq (c_0\theta-C)\frac{b}{\l^{4+\theta}}\|\e\|^2_{H^1_\mu}-C\frac{b^{2k-1}}{\l^{4+\theta}}.
\ee
We fix $\theta$ such that
$$\theta>\frac{C}{c_0}.$$
Then, \eqref{labelstar} yields
\be\label{hohoho}
\frac{d\left(\frac{\mathcal I(\ut)}{\l^\theta}\right)}{dt}\gtrsim -\frac{b^{2k-1}}{\l^{4+\theta}}.
\ee
Now, the definition \eqref{frezzing} of $b$ together with the bootstrap assumptions \eqref{boot2} \eqref{boot3} \eqref{boot4} yield
\be\label{hohoho1}
b\sim \l^{1-\a}.
\ee
In view of \eqref{hohoho} and \eqref{hohoho1}, we obtain
\be\label{hohoho2}
\frac{d\left(\frac{\mathcal I(\ut)}{\l^\theta}\right)}{dt}\gtrsim- b\l^{2(1-\a)(k-1)-4-\theta}.
\ee
This yields after integration between $t$ and $\bar{t}$ using $\mathcal I(\ut)=0$ at $t=\bar{t}$ from the well prepared initial data assumption \fref{initialdata}:
\be
\label{nkdnvdlvnldnd}
\mathcal I(\ut)\lesssim \l(t)^\theta\int_t^{\bar{t}}b(\tau)\l(\tau)^{2(1-\a)(k-1)-4-\theta}d\tau.
\ee
We now use the bootstrap bounds \fref{boot1} \eqref{boot2} \eqref{boot3} and the modulation equation \fref{loirgamma} to estimate:
$$\left|\lsl +b\right|\lesssim |\P_1(b, \tb)|+b\|\e\|_{H^1_\mu}+b^k\lesssim\delta^C b\ \ \mbox{from which}\ \ 0<b\lesssim -\l\l_t.$$ We conclude from \fref{nkdnvdlvnldnd} and the choice of $k$
$$k>1+\frac{1+\max\left(\frac{\theta}{2},1\right)}{1-\a}$$
that
$$\mathcal I(\ut)\lesssim\l(t)^2.$$
In view of the coercivity \eqref{coeritilde}  of $\mathcal I(\ut)$, we obtain
$$\|\nabla \ut\|_{L^2}^2+\frac{1}{\l^2}\|\ut\|_{L^2}^2\lesssim \l^2$$
or equivalently
\be\label{allez}
\|\e\|_{H^1_\mu}\lesssim \l^2.
\ee

{\bf step 2} Control of the modulation parameters.\\

To conclude the proof of Proposition \ref{prop:boot}, it remains to control the modulation parameters. We first derive an estimate for Mod$(t)$. Note that in view of \eqref{hohoho2}, we obtain the following improvement of \eqref{allez}
$$\|\e\|_{H^1_\mu}\lesssim \l^{2+(1-\a)(k-1-\frac{2}{1-\a})}.$$
Together with \eqref{loirgamma} and \eqref{hohoho1}, we deduce
\be\label{hohoho3}
\textrm{Mod}(t)\lesssim b\|\e\|_{H^1_\mu}+b^k\lesssim b^k.
\ee

The control of the modulation parameters is achieved by the following lemma.  

\begin{Lemma}\label{estimateperturbsyst}
Let $k$ satisfying the condition 
\be\label{condk}
k>\frac{2}{1-\a}+1.
\ee
Let $\overline{t}<0$ small enough. Let $(\l_e, b_e, \tb_e, r_e, \gamma_e)$ solution to the exact system \fref{systexacttimet} of modulation equation. Let $(\l, b, \tb, r, \gamma)$ initialized at $t=\overline{t}$ as
\be\label{initialpert}
(\l, b, \tb, r, \gamma)(\overline{t})=(\l_e, b_e, \tb_e, r_e, \gamma_e)(\bar{t})
\ee
and solution of the following perturbed system of modulation equations
\be
\label{systeperturb}
\left\{\begin{array}{llll} \lsl+b-\matchal P_1(b,\tilde{\b})=O(b^k),\\ \rsl+2\beta=O(b^k),\\ \tilde{\beta}_s-\mathcal P_2(b,\tilde{\beta})=O(b^k),\\ b=\frac{2\beta}{\alpha}\frac{\l}{r},\ \  \beta=\beta_{\infty}+\tilde{\b},\\ \gamma_s=1+\b^2+O(b^k).\end{array}\right.
\ee
Then, there is a universal constant $\underline{t}<\bar{t}$ independent of $\bar{t}$ such that the following bounds hold on $[\underline{t},\bar{t}]$:
\be\label{behavbpert}
b(t)=\frac{1}{1+\a}\left(\frac{2(1+\a)\b_\infty}{\a g_\infty}\right)^{\frac{2}{1+\a}}|t|^{\frac{1-\a}{1+\a}}\left(1+O\left(\log(|t|)|t|^{\frac{1-\a}{1+\a}}\right)\right),
\ee
\be\label{behavlambdapert}
\l(t)=\left(\frac{2(1+\a)\b_\infty}{\a g_\infty}\right)^{\frac{1}{1+\a}}|t|^{\frac{1}{1+\a}}\left(1+O\left(\log(|t|)|t|^{\frac{1-\a}{1+\a}}\right)\right),
\ee
\be\label{behavrpert}
r(t)=g_\infty\left(\frac{2(1+\a)\b_\infty}{\a g_\infty}\right)^{\frac{\a}{1+\a}}|t|^{\frac{\a}{1+\a}}\left(1+O\left(\log(|t|)|t|^{\frac{1-\a}{1+\a}}\right)\right),
\ee
\be\label{behavbtpert}
\tb(t)=O\left(|t|^{\frac{2(1-\a)}{1+\a}}\right),
\ee
and
\be\label{behavgammapert}
\gamma(t)=(1+\b_\infty^2)\left(\frac{1-\a}{1+\a}\right)^{\frac{1-\a}{1+\a}}\left(\frac{2(1-\a)\b_\infty}{\a g_\infty}\right)^{-\frac{2}{1+\a}}|t|^{-\frac{1-\a}{1+\a}}+O(\log(|t|)).
\ee 
\end{Lemma}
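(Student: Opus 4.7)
The plan is to compare the perturbed trajectory $(\l,b,\tb,r,\gamma)$ to the exact one $(\l_e,b_e,\tb_e,r_e,\gamma_e)$ produced by Lemma \ref{intergationexactetimet}, using the matched initial data \eqref{initialpert} at $t=\bar t$ and integrating backwards by a bootstrap argument. The first step is to rewrite \eqref{systeperturb} in the $t$-variable via $ds/dt=1/\l^2$, obtaining
\begin{equation*}
\l_t=\frac{-b+\P_1(b,\tb)}{\l}+O\Big(\frac{b^k}{\l}\Big),\quad r_t=\frac{-2\b}{\l}+O\Big(\frac{b^k}{\l}\Big),
\end{equation*}
\begin{equation*}
\tb_t=\frac{\P_2(b,\tb)}{\l^2}+O\Big(\frac{b^k}{\l^2}\Big),\quad \gamma_t=\frac{1+\b^2}{\l^2}+O\Big(\frac{b^k}{\l^2}\Big),
\end{equation*}
together with the frozen relation $b=\frac{2\b}{\a}\frac{\l}{r}$.

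The next step is to set $X=(\delta\l,\delta b,\delta\tb,\delta r)$ with $\delta\l=\l-\l_e$ and so on, use the initialization $X(\bar t)=0$, and derive the difference equation $X_t=A(t)X+\matchal N(t,X)+S(t)$, where $A$ is the linearization of the exact vector field along the exact trajectory, $\matchal N(t,X)=O(|X|^2)$ gathers the quadratic contributions, and the forcing satisfies $|S(t)|\lesssim b_e(t)^k/\l_e(t)^2$ as inherited from the $O(b^k)$ errors in \eqref{systeperturb}. In view of the exact asymptotics \eqref{behavb}-\eqref{behavr}, the coefficients of $A(t)$ have size $|t|^{-1}$ and the source has size $|t|^{(k(1-\a)-2)/(1+\a)}$.

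The third step is to run a bootstrap assumption of the form $|\delta\l(t)|+|\delta r(t)|\leq |t|^{\sigma_1}$, $|\delta b(t)|\leq |t|^{\sigma_2}$, $|\delta\tb(t)|\leq |t|^{\sigma_3}$, with the $\sigma_i$ slightly smaller than the exponents appearing in the $\log(|t|)|t|^{(1-\a)/(1+\a)}$ corrections in \eqref{behavb}-\eqref{behavgamma}. Under this bound the quadratic term $\matchal N(t,X)$ is absorbed, so a backward Gronwall estimate on $[\underline t,\bar t]$ yields
\begin{equation*}
|X(t)|\lesssim \int_t^{\bar t}\frac{b_e(\tau)^k}{\l_e(\tau)^2}\exp\Big(\int_t^\tau \|A(\sigma)\|\,d\sigma\Big)d\tau.
\end{equation*}
The threshold \eqref{condk}, $k>\frac{2}{1-\a}+1$, is precisely what makes this integral strictly smaller than the target asymptotic, closing the bootstrap on an interval $[\underline t,\bar t]$ whose length depends only on the universal constants entering the exact system and is therefore independent of $\bar t$. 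Plugging the resulting bound on $X$ back into \eqref{behavb}-\eqref{behavgamma} then produces \eqref{behavbpert}-\eqref{behavgammapert}.

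The main obstacle will be controlling the Gronwall constant, since naive estimates on $A(t)$ give a power blow up as $t\to 0$. The resolution should be to work with the renormalized unknowns $\delta\l/\l_e$, $\delta b/b_e$, $\delta r/r_e$ and $\delta\tb$, for which the linear part of the difference system, after invoking the constraint $b=\frac{2\b}{\a}\frac{\l}{r}$ (which forces $b_e\sim \l_e^{1-\a}$), reduces essentially to a logarithmic weight in $|t|$; the quantitative condition \eqref{condk} is exactly the one that makes $S$ integrable against such a weight and keeps the deviation strictly below the principal terms in \eqref{behavbpert}-\eqref{behavgammapert}.
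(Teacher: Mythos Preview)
Your outline is aligned with the paper's argument: compare to the exact trajectory, pass to the $t$-variable, set up a difference system, bootstrap and integrate backwards from $\bar t$ where the differences vanish, and use \eqref{condk} to make the $O(b^k)$ forcing integrable. The paper does exactly this in Appendix~B.

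The one substantive divergence is the choice of unknowns. You work with $X=(\delta\l,\delta b,\delta\tb,\delta r)$ and then, to tame the $|t|^{-1}$ growth of $A(t)$ in the Gronwall factor, propose rescaling to $\delta\l/\l_e,\delta b/b_e,\delta r/r_e,\delta\tb$. The paper instead replaces the pair $(\l,r)$ by the single combination $g=r/\l^{\alpha}$. This is the key simplification: from \eqref{systeperturb} one computes $g_s=-\alpha\,\mathcal P_1(b,\tb)\,g+O(b^k)$, and since $\mathcal P_1=O(b^3+|\tb|^3)$ the linearized coefficient in the $\underline g$-equation is \emph{zero} (see \eqref{equg}). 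The remaining two equations for $\underline b$ and $\underline{\tb}$ then carry explicit diagonal weights $\pm\tfrac{c}{|t|}$ with specific constants $c\in\{\tfrac{2\alpha}{1+\alpha},\tfrac{2}{1+\alpha}\}$ (equations \eqref{equb}--\eqref{equtb}), and integrating each one separately with its own integrating factor, rather than applying a matrix Gronwall, is what produces exactly the threshold \eqref{condk}. Your abstract Gronwall with $\|A(\sigma)\|\sim C/|\sigma|$ would in general give a power loss $|t|^{-C}$ with $C$ depending on the full matrix norm, hence a requirement $k>k_*(C)$ possibly strictly larger than $\tfrac{2}{1-\alpha}+1$; your renormalization hint points in the right direction but does not by itself exhibit the diagonal structure that pins down the sharp constant. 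Switching to $(b,g,\tb)$ as in the paper removes this issue cleanly.
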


The proof of Lemma \ref{estimateperturbsyst} is postponed to Appendix \ref{sec:integbackwards}. We now conclude the proof of Proposition \ref{prop:boot}. The assumptions \eqref{condk} \eqref{initialpert} \eqref{systeperturb} of Lemma \ref{estimateperturbsyst} are satisfied in view of the choice \eqref{choiceofk} for $k$, \eqref{hohoho3} and \eqref{initialdata}. Thus, the conclusions of Lemma \ref{estimateperturbsyst} apply. In particular, \eqref{behavbpert} yields \eqref{impboot2}, \eqref{behavbtpert} yields \eqref{impboot3}, \eqref{behavrpert} and \eqref{behavlambdapert} yield \eqref{impboot4}, while \eqref{allez} and \eqref{behavlambdapert} yield  \eqref{impboot1}. This concludes the proof of Proposition \ref{prop:boot}.
\end{proof}


\subsection{Proof of Theorem \ref{thm2}}


We are now in position to conclude the proof of Theorem \ref{thm2}.

\begin{proof}[Proof of Theorem \ref{thm2}]

Let $(t_n)_{n\geq 1}$ be an increasing sequence of times $t_n<0$ such that $t_n\goto 0_-$. Let $u_n$ the solution to \eqref{nls} with 
initial data at $t=t_n$ given by:
\be\label{initialdatan}
u_n(t_n,r)=\frac{1}{\l_e(t_n)^{\frac{2}{p-1}}}Q_{b_e(t_n), \tb_e(t_n)}\left(\frac{r-r_e(t_n)}{\l_e(t_n)}\right)e^{i\gamma_e(t_n)}.
\ee
Let $\underline{t}<0$ be the backwards time provided by Proposition \ref{prop:boot} which is independent of 
$n$.  We first claim  that $u_n(\underline{t})$ is compact in $L^2$ as $n\goto +\infty$. Indeed, Proposition 
\ref{prop:boot} ensures the uniform bound
\be\label{mercre1}
\forall t\in [\underline{t},t_n], \ \ \|u_n(t)\|_{H^1}\lesssim 1.
\ee
This shows that up to a subsequence, $(u_n(\underline{t}))_{n\geq 1}$ is compact in $L^2(r<R)$ as $n\goto +\infty$ for all $R>0$. 
The $L^2$ compactness of $u_n(\underline{t})$ is now the consequence of a standard localization procedure. Indeed, 
let a cut-off  unction $\chi(x)=0$ for $|x|\leq 1$ and $\chi(x)=1$ for $|x|\geq 2$, then 
$$\left|\frac{d}{dt}\int \chi_R|u_n|^2\right|=2\left|Im\left(\int \nabla\chi_R\cdot\nabla u_n)\overline{u_n}\right)\right|\lesssim \frac{1}{R},$$
where we used \fref{mercre1}. Integrating this backwards from $t_n$ to $\underline{t}$ and using \fref{initialdatan} yields:
$$\lim_{R\goto +\infty}\sup_{n\geq 1}\|u_n(\underline{t})\|_{L^2(r>R)}=0,$$
which together with the $L^2(r<R)$ compactness of $(u_n(\underline{t}))_{n\geq 1}$ provided by  \eqref{mercre1} implies up to a subsequence: 
$$u_n(\underline{t})\to u(\underline{t}) \ \ \mbox{in} \ \ L^2\ \ \mbox{as} \ \ n\to +\infty.$$ 

 Let then $u\in \mathcal C([\underline{t},T),H^1)$ be the solution to \fref{nls} with initial data $u(\underline{t})$, then, using the uniform control in $H^1$ for $u_n$ and the convergence in $L^2$ of $u_n(\underline{t})$, we obtain $\forall t\in[\underline{t},\min(T,0))$, 
 $$u_n(t)\to  u(t) \ \ \mbox{in} \ \ L^2.$$ 
 Let $(\lambda_n(t),b_n(t),\gamma_n(t),\e_n(t))$ be the geometrical decomposition associated to $u_n(t)$
 $$u_n=\frac{1}{\lambda_n(t)^{\frac{2}{p-1}}}(Q_{b_n(t)}+\e_n)\left(t,\frac{r-r_n(t)}{\lambda_n(t)}\right)e^{i\gamma_n(t)},$$ 
then $u$ admits on $[\underline{t},\min(T,0))$ a geometrical decomposition  of the form 
 $$u=\frac{1}{\lambda(t)^{\frac{2}{p-1}}}(Q_{b(t), \tb(t)}+\e)\left(t,\frac{r-r(t)}{\lambda(t)}\right)e^{i\gamma(t)}$$ 
 with: $\forall t\in [\underline{t},\min(T,0))$,
\bee
&&\l_n(t)\goto \l(t), \ \ r_n(t)\goto r(t), \ \ b_n(t)\goto b(t), \ \ \tb_n(t)\goto \tb(t), \ \ \gamma_n(t)\goto \gamma(t),\\ 
&&\textrm{and }\e_n(t)\to \e(t)\ \ \mbox{in} \ \ L^2\textrm{ as }n\to +\infty,
\eee 
see \cite{MR3} for related statements. By passing to the limit in the bounds provided by Proposition \ref{prop:boot} and Lemma \ref{estimateperturbsyst}, we obtain the bounds: $\forall t\in [\underline{t},\min(T,0))$,
$$b(t)=\frac{1}{1+\a}\left(\frac{2(1+\a)\b_\infty}{\a g_\infty}\right)^{\frac{2}{1+\a}}|t|^{\frac{1-\a}{1+\a}}\left(1+O\left(\log(|t|)|t|^{\frac{1-\a}{1+\a}}\right)\right),$$
$$\l(t)=\left(\frac{2(1+\a)\b_\infty}{\a g_\infty}\right)^{\frac{1}{1+\a}}|t|^{\frac{1}{1+\a}}\left(1+O\left(\log(|t|)|t|^{\frac{1-\a}{1+\a}}\right)\right),$$
$$r(t)=g_\infty\left(\frac{2(1+\a)\b_\infty}{\a g_\infty}\right)^{\frac{\a}{1+\a}}|t|^{\frac{\a}{1+\a}}\left(1+O\left(\log(|t|)|t|^{\frac{1-\a}{1+\a}}\right)\right),$$
$$\tb(t)=O\left(|t|^{\frac{2(1-\a)}{1+\a}}\right),$$
$$\gamma(t)=(1+\b_\infty^2)\left(\frac{1-\a}{1+\a}\right)^{\frac{1-\a}{1+\a}}\left(\frac{2(1-\a)\b_\infty}{\a g_\infty}\right)^{-\frac{2}{1+\a}}|t|^{-\frac{1-\a}{1+\a}}+O(\log(|t|)),$$ 
and
$$\|\e\|_{H^1_\mu}\lesssim |t|^{\frac{2}{1+\a}}.$$
This yields that $u\in \mathcal C([\underline{t},0),H^1)$, $u$ blows up at time $T=0$. The estimates \fref{decomposition}, \fref{convblowuppiint}, \fref{equioafo} are now a straightforward consequence of the above estimates for $(b, \l, r, \gamma, \e)$.\\
This concludes the proof of Theorem \ref{thm2}.
\end{proof}



\appendix




\section{Integration of the exact system of modulation equations}\label{sec:exactint}


The goal of this Appendix is to prove Lemma \ref{intergationexactetimet}. For convenience, we 
prove Lemma \ref{intergationexactetimet} in time $s$, with $\frac{ds}{dt}=\frac{1}{\l^2_e(t)}$. This is done in the following lemma.

\begin{Lemma}[Integration of the exact system of modulation equations in time $s$]
\label{intergationexacte}
There exists a universal constant $s_0\gg 1$ such that the following holds. Let
$$\frac12<g_0<1,\,\,\,\gamma_0\in\mathbb{R}$$
and
\be\label{defbnot}
b_0=\frac{1}{(1-\alpha)s_0}.
\ee
Then the solution $(\l_e, b_e, \tb_e, r_e, \gamma_e)$ to the dynamical system: 
\be
\label{systexact}
(\matchal M_{\infty})=\left\{\begin{array}{llll} \lsl+b=\matchal P_1(b,\tilde{\b}),\\ \rsl+2\beta=0,\\ \tilde{\beta}_s=\mathcal P_2(b,\tilde{\beta}),\\ b=\frac{2\beta}{\alpha}\frac{\l}{r},\ \  \beta=\beta_{\infty}+\tilde{\b},\\ \gamma_s=1+\b^2\end{array}\right.\  \ \mbox{with}\ \ \left\{\begin{array}{lll} \frac{r(s_0)}{\l^\alpha(s_0)}=g_0,\\ b(s_0)=b_0,\\ \tilde{\beta}(s_0)=\frac{1}{s_0^2},\\ \gamma(s_0)=\gamma_0,\end{array}\right .
\ee
is defined on $[s_0,+\infty)$. Moreover, there exists $g_{\infty}>0$ with 
\be
\label{defginfty}
g_{\infty}=g_0+o_{s_0\to +\infty}(1)
\ee
such that the following asymptotics hold on $[s_0,+\infty)$:
\be
\label{forallsnveneo}
b_e(s)=\frac{1}{(1-\alpha)s}+O\left(\frac{|\log s|}{s^2}\right),\ \ |\tilde{\beta_e}(s)|\lesssim \frac{1}{s^2},
\ee
\be
\label{cneoneneo}
\lambda_e(s)=\left[\frac{\alpha g_{\infty}}{2(1-\alpha)\beta_{\infty}s}\right]^{\frac{1}{1-\alpha}}\left[1+O\left(\frac{\log(s)}{s}\right)\right],
\ee
\be\label{cneoneneobis}
r_e(s)=g_{\infty}\left[\frac{\alpha g_{\infty}}{2(1-\alpha)\beta_{\infty}s}\right]^{\frac{\a}{1-\alpha}}\left[1+O\left(\frac{\log(s)}{s}\right)\right],
\ee
and 
\be
\label{etgamma}
\gamma_e(s)=(1+\b_\infty^2)s+O(1).
\ee
\end{Lemma}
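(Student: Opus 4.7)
The strategy is to reduce the full system $(\matchal M_\infty)$ to a closed ODE for the triple $(b,\tb,\tilde g)$, with $\tilde g := r/\l^\a$, identify the leading decay $b\sim 1/((1-\a)s)$ by a bootstrap argument, and then recover $\l, r, \gamma$ by direct integration against the already-known profile of $b$. Observe that $\l$ and $r$ enter the dynamics only through the ratio $b = 2\b\l/(\a r)$, so it is natural to treat $b$ itself as a dynamical variable and reconstruct $\l, r$ a posteriori.

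First, I derive the autonomous subsystem. From $\rsl = -2\b$ together with $b = 2\b\l/(\a r)$ one obtains $r_s/r = -\a b$, and differentiating $\ln b = \ln(2\b/\a) + \ln\l - \ln r$ in $s$ yields
\be\label{planbeq}
b_s = -(1-\a)b^2 + \frac{b}{\b}\P_2 + b\P_1.
\ee
Using the structure $\P_1 = O(b^3+\tb^3)$ and $\P_2 = -2b\tb + O(b^3+\tb^3)$ from \eqref{defP1P2}, this becomes $b_s + (1-\a)b^2 = -2b^2\tb/\b + O(b^4)$. Similarly $\tb_s = \P_2$ and $\tilde g_s/\tilde g = r_s/r - \a\lsl = -\a\P_1$. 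The inverse map from $(b,\tb,\tilde g)$ to $(\l,r)$ is explicit: $\l^{1-\a} = \a b\tilde g/(2\b)$ and then $r = \tilde g\l^\a$.

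Next I would run a continuity argument on $[s_0, s_*]$ under the a priori bounds $b\in[\tfrac{1}{2(1-\a)s},\tfrac{2}{(1-\a)s}]$, $|\tb|\leq K/s^2$, $|\tilde g - g_0|\leq 1/2$, and strictly improve each of them. Setting $u=1/b$, \eqref{planbeq} becomes $u_s = (1-\a) + O(1/s^2)$, so integration from $s_0$ gives $u(s) = (1-\a)s + O(1)$ and hence the first part of \eqref{forallsnveneo} with error in fact $O(1/s^2)$, absorbed by the stated $|\log s|/s^2$. For $\tb$, the integrating factor $\exp(\int 2b\,d\tau)\sim s^{2/(1-\a)}$ combined with a source $O(b^3)=O(1/s^3)$ produces $|\tb(s)|\lesssim 1/s^2$ via Duhamel; the initial contribution $\tb(s_0)=1/s_0^2$ is also absorbed since $2/(1-\a)>2$. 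Finally $\tilde g_s/\tilde g = O(1/s^3)$ is integrable on $[s_0,\infty)$, so $\tilde g$ admits a limit $g_\infty$ with $g_\infty - g_0 = O(1/s_0^2)$, yielding \eqref{defginfty}. These strict improvements close the bootstrap and hence extend the solution to $[s_0,+\infty)$.

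Having $(b,\tb,\tilde g)$ with sharp asymptotics, the explicit formulas \eqref{cneoneneo}--\eqref{cneoneneobis} follow by inverting the recovery map: $\l^{1-\a} = \a g_\infty/(2(1-\a)\b_\infty s)\cdot(1+O(\log s/s))$ after inserting the asymptotics of $b,\tilde g,\b$, and then $r = \tilde g\l^\a$. The phase equation $\gamma_s = 1 + \b_\infty^2 + 2\b_\infty\tb + \tb^2$ integrates directly to \eqref{etgamma} thanks to the summability of $\tb$ in $s$. The main obstacle is the coupling $-2b^2\tb/\b$ in \eqref{planbeq}, which must not spoil the leading decay of $b$; this works precisely because the integrating factor $s^{2/(1-\a)}$ in the $\tb$-equation grows strictly faster than the natural rate $s^2$ of $b^{-2}$, so $\tb$ decays strictly faster than $b$ and its feedback on the $b$-equation is of order $1/s^3$, integrable in time.
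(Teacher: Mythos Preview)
Your proposal is correct and follows essentially the same approach as the paper: introduce the auxiliary variable $\tilde g=r/\l^\alpha$, reduce to the closed system for $(b,\tb,\tilde g)$, run a bootstrap in which $u=1/b$ satisfies $u_s=(1-\alpha)+\text{(integrable)}$ and $\tb$ is handled via the integrating factor $s^{2/(1-\alpha)}$, then recover $\l,r,\gamma$ from the explicit algebraic relations. The only cosmetic difference is that you bootstrap $|\tb|\leq K/s^2$ (and must take $K$ larger than the universal constant in the improved bound), whereas the paper bootstraps the looser $|\tb|\leq 1/s^{3/2}$ and then improves to $O(1/s^2)$; correspondingly your error in $u_s$ is $O(1/s^2)$ while the paper, being slightly cruder, picks up the $\log s$ that appears in the stated bound for $b_e$.
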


We first show how Lemma \ref{intergationexacte} yields the conclusion of Lemma \ref{intergationexactetimet}.

\begin{proof}[Proof of Lemma \ref{intergationexactetimet}]
In view of \eqref{cneoneneo}, we have
$$\int_{s_0}^{+\infty}\l^2<+\infty.$$
Thus, since $\frac{ds}{dt}=\frac{1}{\l^2(t)}$, the time of existence of the existence of the dynamical system in time $t$ is finite, and we may choose the origin of time $t$ such that the final time is 0. Then, for all $t_e\leq t<0$, we have
$$-t=\int_s^{+\infty}\l^2,$$
which together with \eqref{cneoneneo} yields
\be\label{tvss}
\frac{1}{s}=\left(\frac{1+\a}{1-\a}\right)^{\frac{1-\a}{1+\a}}\left(\frac{2(1-\a)\b_\infty}{\a g_\infty}\right)^{\frac{2}{1+\a}}|t|^{\frac{1-\a}{1+\a}}\left(1+O(\log(|t|)|t|^{\frac{1-\a}{1+\a}})\right).
\ee
Injecting \eqref{tvss} in \eqref{forallsnveneo}, \eqref{cneoneneo} and \eqref{etgamma} yields the wanted estimates \eqref{behavb}, \eqref{behavlambda}, \eqref{behavr}, \eqref{behavbt} and \eqref{behavgamma}. This concludes the proof of Lemma \ref{intergationexactetimet}.
\end{proof}

We now turn to the proof of Lemma \ref{intergationexacte}.

\begin{proof}[Proof of Lemma \ref{intergationexacte}] 
{\bf step 1} Reformulation and bootstrap bounds.\\ 

The local existence of solutions to \fref{systexact} follows from Cauchy Lipschitz. To control the solution on large positive times, let us introduce the auxiliary function: $$g=\frac{r}{\l^{\alpha}}$$ which from \fref{systexact} satisfies:
\be
\label{eqautionog}
\frac{dg}{ds}=-\alpha\left(\lsl+b\right)\frac{r}{\l^{\alpha}}=-\alpha\mathcal P_1(b,\tilde{\beta})g.
\ee
We view equivalently \fref{systexact} as a system on $(b,g,\tilde{\beta})$ with from direct computation the equivalent system of equations:
\be
\label{systexactbis}
\left\{\begin{array}{llll} \frac{dg}{ds}=-\alpha\mathcal P_1(b,\tilde{\beta})g,\\ b_s+(1-\alpha)b^2=\frac{b}{\beta}\mathcal P_2(b,\tilde{\beta})+b\matchal P_1(b,\tilde{\beta}),\\ \tilde{\beta}_s=\mathcal P_2(b,\tilde{\beta}),\\ b=\frac{2\beta}{\alpha}\frac{\l}{r},\ \  \beta=\beta_{\infty}+\tilde{\b},\end{array}\right.\  \ \mbox{with}\ \ \left\{\begin{array}{lll} g(s_0)=g_0,\\ b(s_0)=b_0,\\ \tilde{\beta}(s_0)=\frac{1}{s_0^2}.\end{array}\right .
\ee
We bootstrap the following a priori bounds on the solution which are consistent with the initial data: 
\be
\label{bootstrap}
\forall s_0\leq s\leq \bar{s},  \ \  |g(s)|\leq 1+2g_0, \ \ |\tilde{\beta}(s)|\leq \frac{1}{s^{\frac{3}{2}}},\ \ \left|b(s)-\frac{1}{(1-\alpha)s}\right|\leq \frac{(\log s)^2}{s^2}.
\ee

{\bf step 2} Closing the bootstrap.\\ 

We claim that the bounds \fref{bootstrap} can be improved on $[s_0,\bar{s}]$ provided $s_0$ has been chosen large enough. Indeed, let us close the $b$ bound. From \fref{systexactbis}, \fref{bootstrap}:
\bee
\left|-\frac{d}{ds}\left(\frac{1}{b}\right)+1-\alpha\right|\lesssim \frac{\left|\frac{b}{\beta}\mathcal P_2(b,\tilde{\beta})+b\matchal P_1(b,\tilde{\beta})\right|}{b^2}\lesssim \frac{b(b^2+|\tilde{\b}|^2)}{b^2}\lesssim \frac{1}{s}
\eee
and thus using the boundary condition on $b$ at $s_0$ and the initialization \fref{defbnot}:
$$\left|\frac{1}{b(s)}-(1-\alpha)s\right|\lesssim \left|\frac{1}{b(s_0)}-(1-\alpha)s_0\right|+\int_{s_0}^s\frac{d\sigma}{\sigma}\lesssim \log s$$
from which using $s\geq s_0\gg 1$: 
\be
\label{boundone}
\left|b(s)-\frac{1}{(1-\alpha)s}\right|\lesssim  \frac{\log s}{s^2}.
\ee
Next, we consider $\tb$. Since 
$$\P_2(b,\tb)=-2b\tb+O(b^3+\tb^3),$$
we obtain in view of \fref{bootstrap}, 
$$\tb_s=-2b\tb+O\left(\frac{1}{s^3}\right),$$
which we rewrite using \fref{bootstrap} \fref{boundone}:
\bee
\left|\frac{d}{ds}\left(s^{\frac{2}{1-\alpha}}\tilde{\beta}\right)\right|\lesssim s^{\frac{2}{1-\alpha}}\left[\frac{\log s}{s^2}|\tilde{\beta}|+\frac{1}{s^3}\right]\lesssim s^{\frac{2}{1-\alpha}-3}.
\eee
We integrate using the boundary condition \fref{systexactbis} and $\frac{2}{1-\alpha}-2>0$:
$$\left|s^{\frac{2}{1-\alpha}}\tilde{\beta}(s)\right| \lesssim \left|s_0^{\frac{2}{1-\alpha}}\tilde{\beta}_0\right|+s^{\frac{2}{1-\alpha}-2}-s_0^{\frac{2}{1-\alpha}-2}\lesssim s^{\frac{2}{1-\alpha}-2},$$
and thus 
\be
\label{boundtwo}
|\tb(s)| \lesssim  \frac{1}{s^2}.
\ee
We now close the $g$ bound in brute force from \fref{boundone}, \fref{boundtwo}, \fref{systexactbis} which yield: $$\left|\frac{dg}{ds}\right|\lesssim \frac{1+2g_0}{s^3}$$ and thus in view of the initialization \fref{systexactbis}
\be
\label{vnoneneo}
|g(s)|\leq g_0+C\frac{1+g_0}{s_0^2}\leq \frac{1}{2}+\frac{3}{2}g_0,
\ee  
for $s_0$ large enough. The bounds \fref{boundone}, \fref{boundtwo}, \fref{vnoneneo} improve \fref{bootstrap} and thus from a standard continuity argument, the bounds \fref{boundone}, \fref{boundtwo}, \fref{vnoneneo} hold on $[s_0,+\infty)$ and the solution is global.\\
 
 {\bf step 3}. Conclusion. \\
 
 The bounds \fref{boundone}, \fref{boundtwo} being now global, \fref{forallsnveneo} is proved. We moreover conclude from \fref{systexactbis}: $$\int_{s_0}^{+\infty}\left|\frac{dg}{ds}\right|ds\lesssim \int_{s_0}^{+\infty}\frac{ds}{s^3}=o_{s_0\to +\infty}(1)$$  and hence there exists $g_{\infty}$ satisfying \fref{defginfty} such that: 
 \be
 \label{cneonenokvme}
 \forall s\geq s_0, \ \ |g(s)-g_{\infty}|\lesssim \frac{1}{s^2}.
 \ee This yields from \fref{systexactbis}, \fref{cneonenokvme}, \fref{bootstrap}:
 $$\l(s)=\frac{\alpha b}{2\beta}r=\frac{\alpha}{2(1-\alpha)\beta_{\infty}s}g_{\infty}\l^{\alpha}\left[1+O\left(\frac{\log(s)}{s}\right)\right]$$ from which: $$\lambda(s)=\left[\frac{\alpha g_{\infty}}{2(1-\alpha)\beta_{\infty}s}\right]^{\frac{1}{1-\alpha}}\left[1+O\left(\frac{\log(s)}{s}\right)\right].$$
Together with \fref{cneonenokvme}, we obtain
$$r(s)=g\l^\a=g_\infty\left[\frac{\alpha g_{\infty}}{2(1-\alpha)\beta_{\infty}s}\right]^{\frac{\a}{1-\alpha}}\left[1+O\left(\frac{\log(s)}{s}\right)\right].$$ 
 Finally, it only remains to estimate $\gamma$. In view of \eqref{systexact} and \eqref{boundtwo}, we have
 $$\frac{d\gamma}{ds}=1+\b_\infty^2+O\left(\frac{1}{s^2}\right),$$
 which after integration between $s_0$ and $s$ yields
 $$\gamma(s)=(1+\b^2_\infty)s+O(1).$$
 This concludes the proof of Lemma \ref{intergationexacte}.
\end{proof}


\section{Stability of the modulation equations}\label{sec:integbackwards}


The goal of this Appendix is to prove Lemma \ref{estimateperturbsyst}. We introduce the auxiliary functions
$$g=\frac{r}{\l^{\alpha}}\textrm{ and }g_e=\frac{r_e}{\l_e^{\alpha}}.$$ 
We define 
$$\underline{b}=b-b_e,\,\,\, \underline{g}=g-g_e\textrm{ and }\underline{\tb}=\tb-\tb_e.$$
From the initial conditions \eqref{initialpert}, we have
\be\label{initialunder}
(\underline{b}, \underline{g}, \underline{\tb})(\overline{t})=(0, 0, 0).
\ee
We bootstrap the following a priori bounds on the solution which are consistent with \eqref{initialunder} 
\be
\label{bootstrappert}
\forall t_0\leq t\leq \bar{t},  \ \ |\underline{g}(t)|+|\underline{\tilde{\beta}}(t)|+|\underline{b}(t)|\leq |t|^{\frac{2}{1+\a}}.
\ee

Next, recall \fref{systexact} 
\be
\label{eqautionoge}
\frac{dg_e}{ds}=-\alpha\mathcal P_1(b_e,\tilde{\beta_e})g_e.
\ee
Also, the modulation equations for $r$ and $\l$ and the choice of $b$ in \eqref{systeperturb} implies
\be
\label{eqautionogbis}
\frac{dg}{ds}=\left(\frac{r_s}{\l}+2\b\right)g\frac{\l}{r}-\a g\left(\lsl+b\right)=-\alpha\mathcal P_1(b,\tilde{\beta})g+O(b^k).
\ee
We have
\be\label{whynot}
\frac{ds}{dt}=\frac{1}{\l^2}=\left(\frac{2\b}{\a b g}\right)^{\frac{2}{1-\a}}\textrm{ and }\frac{ds_e}{dt}=\frac{1}{\l_e^2}=\left(\frac{2\b_e}{\a b_e g_e}\right)^{\frac{2}{1-\a}}.
\ee
In view of the dynamical system \eqref{systeperturb} for $(b, \tb)$, the dynamical system \eqref{eqautionogbis} for $g$, the dynamical system \eqref{systexact} for $(b_e, g_e, \tb_e)$, the dynamical system \eqref{eqautionoge} for $g_e$,  the definition of $(\underline{b}, \underline{g}, \underline{\tb})$, \eqref{whynot}, and the bootstrap bound \eqref{bootstrappert}, we obtain the following dynamical system for $(\underline{b}, \underline{g}, \underline{\tb})$ on $t_0\leq t\leq \overline{t}$
\be\label{equb}
\underline{b}_t-\frac{2\a}{1+\a}\frac{\underline{b}}{|t|}=O\left(|t|^{-\frac{2\a}{1+\a}}(|\underline{b}|+|\underline{\tb}|+|\underline{g}|)+|t|^{k\frac{1-\a}{1+\a}-\frac{2}{1+\a}}\right),
\ee
\be\label{equtb}
\underline{\tb}_t+\frac{2}{1+\a}\frac{\underline{\tb}}{|t|}=O\left(|t|^{-\frac{2\a}{1+\a}}(|\underline{b}|+|\underline{\tb}|+|\underline{g}|)+|t|^{k\frac{1-\a}{1+\a}-\frac{2}{1+\a}}\right),
\ee
and 
\be\label{equg}
\underline{g}_t=O\left(|t|^{-\frac{2\a}{1+\a}}(|\underline{b}|+|\underline{\tb}|+|\underline{g}|)+|t|^{k\frac{1-\a}{1+\a}-\frac{2}{1+\a}}\right).
\ee
Integrating \eqref{equg} between $t$ and $\overline{t}$ and using \eqref{initialunder} and \eqref{condk} yields 
\be\label{vacances}
|\underline{g}(t)|\lesssim \int_t^{\overline{t}}|\tau|^{-\frac{2\a}{1+\a}}(|\underline{b}|+|\underline{\tb}|+|\underline{g}|)d\tau+|t|^{k\frac{1-\a}{1+\a}+1-\frac{2}{1+\a}}.
\ee
Integrating \eqref{equb} between $t$ and $\overline{t}$ and using \eqref{initialunder} and \eqref{condk} yields
\be\label{vacances1}
|\underline{b}(t)|\lesssim |t|^{-\frac{2\a}{1+\a}}\int_t^{\overline{t}}(|\underline{b}|+|\underline{\tb}|+|\underline{g}|)d\tau+|t|^{k\frac{1-\a}{1+\a}+1-\frac{2}{1+\a}}.
\ee
Integrating \eqref{equtb} between $t$ and $\overline{t}$ and using \eqref{initialunder} and \eqref{condk} yields 
\be\label{vacances2}
|\underline{\tb}(t)|\lesssim |t|^{\frac{2}{1+\a}}\int_t^{\overline{t}}|\tau|^{-2}(|\underline{b}|+|\underline{\tb}|+|\underline{g}|)d\tau+|t|^{k\frac{1-\a}{1+\a}+1-\frac{2}{1+\a}}.
\ee
In view of \eqref{vacances}, \eqref{vacances1} and \eqref{vacances2}, we obtain
\bea\label{vacances3}
\nn|\underline{b}|+|\underline{\tb}|+|\underline{g}|&\lesssim& \int_t^{\overline{t}}(|\tau|^{-\frac{2\a}{1+\a}}+|t|^{-\frac{2\a}{1+\a}}+|t|^{\frac{2}{1+\a}}|\tau|^{-2})(|\underline{b}|+|\underline{\tb}|+|\underline{g}|)d\tau\\
&&+|t|^{k\frac{1-\a}{1+\a}+1-\frac{2}{1+\a}}.
\eea
Injecting the bootstrap assumption \eqref{bootstrappert}, noticing that the integral is convergent, and then reiterating finally yields
\be\label{vacances4}
|\underline{b}|+|\underline{\tb}|+|\underline{g}|\lesssim |t|^{k\frac{1-\a}{1+\a}+1-\frac{2}{1+\a}},
\ee
which is an improvement of the bootstrap assumption \eqref{bootstrappert} in view of \eqref{condk}. Thus, \eqref{vacances4} holds on $\underline{t}<\bar{t}$, for some $\underline{t}$ independent of $\bar{t}$. Now, the wanted estimate \eqref{behavbpert} for $b$  and \eqref{behavbtpert} for $\tb$ follow from \eqref{vacances4}, \eqref{condk},  and the estimate  \eqref{behavb} for $b_e$  and \eqref{behavbt} for $\tb_e$. Also, the wanted estimate \eqref{behavlambdapert} for $\l$  and \eqref{behavrpert} for $r$ follow from the definition of $b$ and $g$, \eqref{vacances4}, \eqref{condk}, and the estimate \eqref{behavr} for $r_e$  and \eqref{behavlambda} for $\l_e$. 

Finally, we derive the wanted estimate for $\gamma$. In view of the dynamical system \eqref{systeperturb} for $\gamma$, the dynamical system \eqref{systexact} for $\gamma_e$, \eqref{whynot}, and the estimate \eqref{vacances4}, we obtain the following dynamical system for $\gamma-\gamma_e$
\be\label{vacances5}
(\gamma-\gamma_e)_t = O\left(|t|^{k\frac{1-\a}{1+\a}+1-\frac{5-\a}{1+\a}}\right).
\ee
Now, recall that $\gamma(\overline{t})=\gamma_e(\overline{t})$ from \eqref{initialpert}, so that integrating \eqref{vacances5} between $t$ and $\overline{t}$ and using \eqref{condk} yields 
\be\label{vacances6}
|\gamma-\gamma_e|\lesssim |t|^{k\frac{1-\a}{1+\a}+2-\frac{5-\a}{1+\a}} .
\ee
Now, the wanted estimate for $\gamma$ \eqref{behavgammapert} follows from \eqref{vacances6}, \eqref{condk}, and the estimate for $\gamma_e$ \eqref{behavgamma}. This concludes the proof of Lemma \ref{estimateperturbsyst}. 



\end{document}